\numberwithin{equation}{section}
\def\version{1.4.2015}\def\users{}  %
\definecolor{gray}{gray}{0.5}
\definecolor{labelkey}{rgb}{1.,.2,0.}
\numberwithin{equation}{section}
\definecolor{brown}{rgb}{0.5,0,0}
    \newcommand{\DELETE}[1]{}
    \newcommand{\COMMENT}[1]{}
    \newcommand{\REM}[1]{\marginpar{\bfseries\tiny{\color{blue}}}}
    \newcommand{\TODO}[1]{} 
 \newcommand{\DELETE}[1]{{\color{brown}\sout{#1}\color{black}}}
 \newcommand{\COMMENT}[1]{{\color{red}\uuline{#1}\color{black}}}
 \newcommand{\REM}[1]{\marginpar{\bfseries\tiny{\color{blue}#1}}}
 \newcommand{\TODO}[1]{{$^{\color{blue}{TODO:}}$\footnote{\color{blue}{#1}}}}  
\newtheorem{remark}[theorem]{Remark}
\newcommand\DT[1]{\mathchoice
                 {{\buildrel{\hspace*{.1em}\text{\LARGE.}}\over{#1}}}
                 {{\buildrel{\hspace*{.1em}\text{\Large.}}\over{#1}}}
                 {{\buildrel{\hspace*{.1em}\text{\large.}}\over{#1}}}
                 {{\buildrel{\hspace*{.1em}\text{\large.}}\over{#1}}}}
\newcommand\DDT[1]{\mathchoice
   {{\buildrel{\hspace*{.1em}\text{\LARGE.\hspace*{-.1em}.}}\over{#1}}}
   {{\buildrel{\hspace*{.1em}\text{\Large.\hspace*{-.1em}.}}\over{#1}}}
   {{\buildrel{\hspace*{.1em}\text{\large.\hspace*{-.1em}.}}\over{#1}}}
   {{\buildrel{\hspace*{.1em}\text{\large.\hspace*{-.1em}.}}\over{#1}}}}
\newcommand{\GD}{\mathchoice
                  {\Gamma_{\hspace*{-.15em}\mbox{\tiny\rm D}}}
                  {\Gamma_{\hspace*{-.15em}\mbox{\tiny\rm D}}}
                  {\Gamma_{\hspace*{-.1em}\mbox{\tiny\rm D}}}
                  {\Gamma_{\hspace*{-.05em}\mbox{\tiny\rm D}}}}
\newcommand{\GN}{\Gamma_{\hspace*{-.15em}\mbox{\tiny\rm N}}}
\newcommand{\Sdir}{\Sigma_{\mbox{\tiny\rm D}}}
\newcommand{\Snew}{\Sigma_{\mbox{\tiny\rm N}}}
\newcommand{\sY}{\sigma_{\mbox{\tiny\rm Y}}^{}}
\newcommand{\scrE}{\mathscr E}
\newcommand{\scrR}{\mathscr R}
\newcommand{\barQ}{{\,\overline{\!Q\!}\,}}
\newcommand{\barOmega}{{\,\overline{\!\Omega\!}\,}}
\newcommand\eq{\eqref}
\newcommand\ti{\times}
\newcommand\eps{\varepsilon}
\newcommand{\R}{{\mathbb R}} 
\newcommand\uD{u_{\mbox{\tiny\rm D}}}
\newcommand\DTuD{\DT u_{\mbox{\tiny\rm D}}}
\newcommand\wD{w_{\mbox{\tiny\rm D}}}
\newcommand\hD{\mathfrak{h}_{\mbox{\tiny\rm D}}}
\newcommand\baruDtau{\baru_{\mbox{\tiny\rm D},\tau}}
\newcommand{\weak}{\rightharpoonup}
\newcommand{\wt}[1]{\mathchoice
     {\text{\small$\widetilde{\text{\normalsize$#1$}}\hspace*{.03em}$}}
                    {\text{\small$\widetilde{\text{\normalsize$#1$}}$}}
                    {\widetilde{#1\hspace*{-.02em}}\hspace*{.03em}}
                    {\tilde{#1}}}
\newcommand{\Indic}{\boldsymbol\delta}
\renewcommand{\d}{\mathrm{d}}
\newcommand{\dd}{\,\mathrm{d}}
\newcommand{\frM}{f}
\newcommand\ootimes{\odot}
\newcommand{\nablaS}{\nabla_{\scriptscriptstyle\textrm{\hspace*{-.1em}S}}^{}}
\newcommand{\divS}{\mathchoice
                   {\mathrm{div}_{\scriptscriptstyle\textrm{\hspace*{-.1em}S}}^{}}
                   {\mathrm{div}_{\scriptscriptstyle\textrm{\hspace*{-.2em}S}}^{}}
                   {\mathrm{div}_{\scriptscriptstyle\textrm{\hspace*{-.3em}S}}^{}}
                   {\mathrm{div}_{\scriptscriptstyle\textrm{\hspace*{-.5em}S}}^{}}}
\newcommand{\PS}{\mathrm{\mathbb P}_{\scriptscriptstyle\textrm{\hspace*{-.2em}S}}^{}}
\newcommand{\Vdots}{\mathchoice{\,\vdots\,}
        {\,\begin{minipage}[c]{.1em}\vspace*{-.3em}$^{\vdots}$\end{minipage}\,}
        {\,\tiny\vdots\,}{\,\tiny\vdots\,}}
\newcommand{\Item}[2]{\parbox[t]{.055\textwidth}{#1}\hfill%
      \parbox[t]{.945\textwidth}{#2}\vspace*{.8mm}} 
\DeclareMathOperator*{\argmin}{argmin}
\DeclareMathOperator*{\tr}{tr}
\newcommand\uDtauhk{u_{\mbox{\tiny\rm D},\tau h}^k}
\newcommand{\baru}{{\hspace*{.05em}\overline{\hspace*{-.05em}u\hspace*{-.05em}}\hspace*{.05em}}}
\newcommand{\bare}{{\hspace*{.05em}\overline{\hspace*{-.05em}e}}}
\newcommand{\barpi}{{\hspace*{.05em}\overline{\hspace*{-.05em}\pi\hspace*{-.05em}}\hspace*{.05em}}}
\newcommand{\barzeta}{{\hspace*{.05em}\overline{\hspace*{-.05em}\zeta}}}
\newcommand{\barxi}{{\hspace*{.05em}\overline{\hspace*{-.05em}\xi}}}
\DeclareMathOperator*{\dev}{dev}
\title{Perfect plasticity with damage  and healing at small strains, its modelling, analysis, and computer implementation}
\author{Tom\'a\v s Roub\'\i\v cek\footnotemark[1]\ \footnotemark[2]\ \footnotemark[3]\and 
Jan Valdman\footnotemark[3] \footnotemark[4]}
\begin{document}
\maketitle

\renewcommand{\thefootnote}{\fnsymbol{footnote}}

\footnotetext[1]{Mathematical Institute, Charles University,
Sokolovsk\'a 83, CZ-186~75~Praha~8, Czech Rep.}
\footnotetext[2]{Institute of Thermomechanics,
Czech Acad.\ Sci.,
Dolej\v skova 5, 182 00 Praha 8, Czech Rep.}
\footnotetext[3]{Institute of Information Theory and Automation, Czech Academy of Sciences,
Pod vod\'{a}renskou v\v{e}\v{z}\'{\i}~4,
CZ-18208~Praha~8, Czech Republic.}
\footnotetext[4]{Institute of Mathematics and Biomathematics, 
Faculty of Science, University of South Bohemia, Brani\v sovsk\' a~31, 
CZ-370 05 \v{C}esk\'{e} Bud\v{e}jovice, Czech Republic.}

\begin{abstract}
The quasistatic, Prandtl-Reuss perfect plasticity at small strains is combined with a gradient, reversible (i.e.\ admitting healing) damage which influences both the elastic moduli and the yield stress. Existence of weak solutions of the resulted system of variational inequalities is proved by a suitable fractional-step discretisation in time with guaranteed numericalstability and convergence. After finite-element approximation, this scheme is computationally implemented and illustrative 2-dimensional simulations are performed. The model allows e.g.\ for application in geophysical modelling of re-occurring rupture of lithospheric faults. Resulted incremental problems are solved in MATLAB by quasi-Newton method to resolve elastoplasticity component of the solution while  damage component is obtained by solution of a quadratic programming problem.  
\end{abstract}

\begin{keywords}
Prandtl-Reuss perfect plasticity,
bounded-deformation space, incomplete damage, 
fractional-step time discretisation, finite-element method, 
quasi-Newton method, quadratic programming, nonsmooth continuum mechanics, 
geophysical applications.
\end{keywords}

\begin{AMS}
35K87, 
49N10, 
65K15, 
74A30 
74C05 
74R20, 
86A17, 
90C53. 
\end{AMS}

\pagestyle{myheadings} \thispagestyle{plain}
\markboth{T.Roub\'\i\v cek \& J.Valdman}
{Perfect plasticity with damage and healing.}

\section{Introduction}

There is a vast amount of literature about plasticity and about 
damage separately, both in mathematics and in civil or mechanical engineering.
Much less literature addresses various combination of plasticity and 
damage, cf.\ e.g.\ \cite{AlMaVi14GDMC,AlMaVi14GDMC+,Cris??GSQE,CriLaz??VAQE,JirBaz02IAS,Kach90ICDM,SoHaSh06AACP}.
In engineering, this is usually called ductile damage, 
cf.\ e.g.\ \cite{GraJir06PMND,Kraj89DM,Lema96CDM,LemDes05EDM,Maug92TPF}.
Also a lot of geophysical models combine reversible damage (called rather 
ageing) with some sort of plasticity (often modelled as not entirely 
independent of damage, however), cf.\ e.g.\ \cite{LyBZAg97DDFF}.

The goal of this article is to devise a model that would allow for\\
\Item{$\bullet$}{modelling of thin plastic shear bands surrounded by wider damage zones 
(as typically occurs in geophysical modeling of lithospheric faults with very 
narrow core) with 
possible healing of damage (as considered in geophysical modeling to allow 
re-occurring damaging), and simultaneously}
\Item{$\bullet$}{rigorous proof of existence of weak solutions of the resulted system of 
variational inequalities proved by a suitable fractional-step discretisation 
in time with guaranteed numerical stability and convergence, and}
\Item{$\bullet$}{efficient numerical implementation of the time-discrete model.}
We depart from the standard linearized, associative, rate-independent 
plasticity at small strain as presented e.g.\ in \cite{HanRed99PMTN}.
Simultaneously, we use also a rather standard scalar (i.e.\ isotropic)
damage as introduced by L.M.\,Kachanov in late 60ieth and 
presented e.g.\ in \cite{Frem02NST}, considered here 
however as rate dependent and reversible in the sense that 
a possible healing is allowed. To avoid serious mathematical and 
computation difficulties, we have in mind primarily 
an incomplete damage through a higher-order damage-independent 
term, although the standard elastic tensor can allow for a complete 
damage, cf.\ $\mathbb H$ and $\mathbb C=\mathbb C(\zeta)$ below. 
An important aspect of the model
is that not only the conservative part but also the dissipative
part is subjected to damage, i.e.\ not only the elastic moduli but
also the yield stress will be considered as damageable. This relatively
simple and lucid mechanism will however lead to a possibly very complex
response of the model. 

To make the model accessible to analysis, we work within the setting of 
small strains, and we also take into account surface-energy effects by 
including in the free energy a term dependent on the gradient of the total 
strain. This is also known as a concept of so-called second-grade nonsimple 
materials, cf.\ e.g.\ \cite{Podi02CISM,Silh88PTNB}, alternatively also 
referred as the concept of hyper- or couple-stresses 
\cite{PoGiVi10HHCS,Toup62EMCS}; for reasons we use it here 
cf.\ Remark~\ref{rem-simple} below. 

In view of applications we have in mind, we suppress any hardening effects
and thus we consider the \emph{Prandtl-Reuss} elastic/\emph{perfectly plastic} 
model; in fact, considering kinematic or isotropic hardening would make
a lot of aspects even much easier. A plastic yield stress dependent on damage 
is in some variants used in the Cam-Clay model, cf.\ e.g.\ 
\cite{DaDeSo11QECC,LICa02CamClay,Whee03CamClay2}, or in the Perzyna model with 
damage, cf.\ \cite{SoHaSh06AACP},
and also in \cite{AlMaVi14GDMC,AlMaVi14GDMC+,Cris??GSQE,CriLaz??VAQE}. 
Let us also point out that damage with healing 
without plasticity (as sometimes considered in mathematical 
literature) would have only very limitted application because
damaged material typically can undergo substantial deformation 
and the healing should not be performed towards the original 
configuration.  

We confine on the isothermal variant of the model. In contrast to 
\cite{RoSoVo13MBMF}, we consider rate-independent plasticity
without any gradient, so that concentration of plastic and 
total strains and development of sharp shear bands is possible.
Also, related to this concentration, both plastification and damage 
are driven by the elastic stress (which is still well controlled) rather
than the total strain (which may concentrate); for 
plasticity itself, see also \cite{Roub13TPP}.

The presented model has potential application in geophysical modelling of 
re-occurring rupture of lithospheric faults or of nucleation of new faults. 
A narrow so-called core of the fault can be modelled by the perfect plasticity 
while and a relatively wide damage zone around it can arise by the 
gradient-damage model. After a combination with inertial effects (and possibly 
a visco-elastic rheology e.g.\ of Jeffreys type), this model involves seismic 
waves and can serve for earthquake simulations where these waves are emitted 
during fast rupture, cf.\ Remarks~\ref{rem-dynam} and \ref{rem-nonlin} below 
for some modifications of the presented model towards these applications.
Another possible modification, going beyond the scope of this paper however, 
might use the structure of the stored energy similar to what is used in a 
phenomenological models for polycrystalline shape-memory alloys where our 
damage variable is in a position of temperature and plastic strain is a 
transformation strain subjected to some additional constraints, see 
e.g.\ \cite[Example 5.15]{GKNT10ANAT}.

The plan of the paper is as follows: In Section~\ref{sec-model} we formulate 
the model and cast a suitable definition of the weak solution, and pronounce 
a basic existence result which is proved later in Sections~\ref{sec-disc}
by a constructive time discretisation method. A further finite-element 
discretisation is then outlined.
This allows for computer implementation of the model presented in 
Section~\ref{sec-comp}, whose efficiency and some physical aspects 
eventually demonstrated on in Section~\ref{sec-simul} an illustrative 
example with geophysical motivation.

\section{The model, its weak formulation, and existence result}\label{sec-model}

Hereafter, we suppose that the damageable elasto-plastic
body occupies a bounded smooth domain $\Omega\subset\R^d$, $d=2$ or $3$.
We  denote by $\vec{n}$ the outward unit normal to $\partial \Omega$.
 We further suppose that the boundary of $\Omega$ splits as
\[
\partial \Omega :=\Gamma= \GD\cup \GN\,,
\]
with $\GD$ and $\GN $ open subsets in the relative topology of
$\partial\Omega$, disjoint one from each other, each of them
with a smooth ($(d{-}1)$-dimensional) boundary, and covering 
$\partial\Omega$ up to $(d{-}1)$-dimensional
zero measure. Considering $T>0$ a fixed time horizon, we  set
\begin{displaymath}
Q:=(0,T){\times}\Omega, \qquad \Sigma:= (0,T){\times}\Gamma,
\qquad \Sdir\!:= (0,T){\times}\GD, \qquad \Snew\!:= (0,T){\times}\GN.
\end{displaymath}
Further, $\R_\mathrm{sym}^{d\ti d}$ and $\R_\mathrm{dev}^{d\ti d}$ will denote 
the set of symmetric or symmetric trace-free (=\,deviatoric) 
$(d{\ti}d)$-matrices, respectively. For readers' convenience, let us summarize 
the basic notation used in what follows:


\hspace*{-1.6em}\fbox{
\begin{minipage}[t]{0.46\linewidth}
\small

$d=2,3$ dimension of the problem,

$\R_\mathrm{dev}^{d\ti d}:=\{A\in \R;\ \mathrm{tr}\,A=0\}$, 

$u:Q\to\R^d$ displacement,


$\pi:Q\to\R_\mathrm{dev}^{d\ti d}$ plastic strain,

$\zeta:Q\to[0,1]$ damage variable,


$a:\R\to\R^+$ damage-dissipation potential,

$b:[0,1]\to\R$ stored energy of damage,

$e_\mathrm{el}$ elastic strain, $\ e_\mathrm{el}=e(u){-}\pi$,

$e=e(u)=\frac12\nabla u^\top\!+\frac12\nabla u$ 
\\\hspace*{6em}total small-strain tensor,

$\mathbb C:[0,1]\to\R^{3^4}$ elasticity tensor\\\hspace*{6em}dependent on $\zeta$,

\end{minipage}\ \
\begin{minipage}[t]{0.48\linewidth}\small

$\mathfrak{h}$ hyperstress (3rd-order) tensor

$\mathbb H$ a (small) hyperelasticity tensor,

$S=\sY(\cdot)B_1:[0,1]\rightrightarrows\R_\mathrm{dev}^{d\ti d}$,\\\hspace*{6em}with $B_1$ the unit ball in $\R_\mathrm{dev}^{d\ti d}$,

$\sY:[0,1]\to\R^+$ plastic yield stress\\\hspace*{6em}dependent on $\zeta$,

$g:Q\to\R^d$  applied bulk force,

$\wD:\Sdir\to\R^d$  prescribed time-dependent\\\hspace*{6em}boundary displacement,

$\frM:\Snew\to\R^d$  applied traction force,

$\kappa>0$ scale coefficient\\\hspace*{6em}of the gradient of damage.

\end{minipage}\medskip
}

\vspace{-.0em}

\begin{center}
{\small\sl Table\,1.\ }
{\small
Summary of the basic notation used thorough the paper. 
}
\end{center}

\noindent The {\it state} is formed by the triple $q:=(u,\pi,\zeta)$.
Considering still a (small but fixed) regularizing parameter $\eps>0$,
the governing equation/inclusions read as:
\begin{subequations}\label{plast-dam}
\begin{align}\label{plast-dam1}
&
\mathrm{div}\big(
\mathbb C (\zeta)e_\mathrm{el}
-\mathrm{div}\,\mathfrak{h}\big)+g=0\ \ \ \ \ \ \ \ \ \ 
&&\!\!\!\!\!\!\!\text{\sf(momentum equilibrium)}
\\[-.2em]&\nonumber\hspace*{4em}
\text{ with }\ \
\mathfrak{h}=\mathbb H\nabla e_\mathrm{el}
\ \text{ and }\ 
e_\mathrm{el}=e(u){-}\pi,
\\[.2em]\label{plast-dam12}
&
\partial\delta_{S(\zeta)}^*(\DT{\pi})
\ni\mathrm{dev}\big(\mathbb C (\zeta)e_\mathrm{el}
-\mathrm{div}\,\mathfrak{h}\big)
,&&\text{\sf(plastic flow rule)}
\\[-.2em]\label{plast-dam13}
&\partial a(\DT\zeta)+\frac12\mathbb C'(\zeta)
e_\mathrm{el}:e_\mathrm{el}
\\[-.2em]&\nonumber\hspace*{1em}
-\kappa
\,\mathrm{div}\big((1{+}\eps|\nabla\zeta|^{r-2})\nabla\zeta\big)
+N_{[0,1]}(\zeta)\ni b'(\zeta),&&\text{\sf(damage flow rule)}
\end{align}\end{subequations}
with $\delta_{S}$ the indicator function to $S$ and $\delta_{S}^*$
its convex conjugate. Here, $[\mathbb C (\zeta)e]_{ij}$ and 
$[\mathbb H\nabla e]_{ijk}$ 
mean $\sum_{k,l=1}^d\mathbb C_{ijkl}(\zeta)e_{kl}$ and 
$\sum_{m,n=1}^d\mathbb H_{ijmn}\frac{\partial}{\partial x_m}e_{in}$, respectively.
 
We employed two regularizing terms with a regularizing tensor $\mathbb H$
and a regularizing parameter 
$\eps>0$ with  an exponent to be assumed suitably big, namely 
$r>d$. 
This regularization facilitates analytical well-posedness of 
the problem and, because the gradient-damage term
degenerates at 
$\nabla\zeta=0$, its influence is 
presumably small if $\eps$ is small and 
$\nabla\zeta$ not too large. Moreover, 
$\mathbb H$ in 
\eqref{plast-dam1} prevents a complete damage
at least when we assume $\mathbb C(\zeta)$ positive semidefinite.
Actually, \eqref{plast-dam12} represents rather the thermodynamical-force
balance governing damage evolution while the corresponding flow rule is written 
rather in the (equivalent) form
\[
\DT{\pi}
\in N_{S(\zeta)}\Big(\mathrm{dev}\big(\mathbb C (\zeta)e_\mathrm{el}
-\mathrm{div}\,\mathfrak{h}\big)\Big)
\]
with $N$ the set-valued normal-cone mapping to the convex set indicated.
An analogous remark applies to \eqref{plast-dam13}.

A remarkable attribute of this model is a
damage-dependent yield-stress domain $S=S(\zeta)$. Typically, 
developing damage makes $S$ smaller and vice versa, i.e.\ 
$S(\cdot):[0,1]\rightrightarrows\R_\mathrm{dev}^{d\ti d}$ 
is nondecreasing with respect to the ordering of subsets by inclusion. 
Likewise, typically also $b(\cdot)$ and $\mathbb C(\cdot)$ are nondecreasing,
the later one with respect to the L\"owner's ordering, i.e.\ 
$\mathbb C(z_1)-\mathbb C(z_2)$ is positive semi-definite for $z_1\ge z_2$.
Rate-dependency of damage evolution prevents nonphysically too-early 
damaging/plastification and, due to the driving force 
$b'(\zeta)$, also allows simply for reverse damage evolution (a so-called 
\emph{healing})\index{healing}\index{damage!with healing} 
by using a convex function $a:\R\to\R^+$ in \eq{plast-dam13} 
having naturally its minimum at $0$. The microstructural interpretation of $b$ 
is a stored energy related with microcracks/microvoids arising by damage, 
reflecting the fact that any surface in the bulk bears some extra energy.
Minimization of this energy naturally leads to a tendency for healing of 
these material defects.
Of course, \eq{plast-dam} is to be completed by appropriate boundary 
conditions for (\ref{plast-dam}a,c), e.g.
\begin{subequations}\label{plast-dam-BC}
\begin{align}\label{plast-dam-BC1}
&&&u=\wD&&\text{on }\GD,&&&&
\\\label{plast-dam-BC+}
&&&\big(\mathbb C (\zeta)e_\mathrm{el}
-\mathrm{div}\,\mathfrak{h}\big){\cdot}\vec{n}
-\divS(\mathfrak{h}\vec{n})=\frM
&&\text{on }\GN,
\\&&&
\nabla\zeta{\cdot}\vec{n}=0
\ \ \ \ \text{ and }\ \ \ \ \mathfrak{h}{:}(\vec{n}\otimes\vec{n})=0
&&\text{on }\Gamma
\end{align}\end{subequations}
with $\vec{n}$ denoting the unit outward normal to $\Omega$. Moreover, 
$\divS$ is the surface-divergence operator, which may be introduced as follows 
\cite{GurMur74CTEM}: given a vector field $v:\Gamma\rightarrow\R^d$, we extend 
it to a neighborhood of $\Gamma$, and we let its surface gradient (valued
in $\R^{d\times d}$) be defined 
as $\nablaS v=\PS\nabla v$, where $\PS=\mathbb I-\vec{n}\otimes\vec{n}$ is the 
projector on the tangent space of $\Gamma$; we then let the surface divergence 
of $v$ be the scalar field $\divS v=\PS:\nablaS v={\rm tr}(\PS\nabla v\PS)$. 
Given a tensor field $\mathbb A:\Gamma\rightarrow\R^{d\times d}$, we let 
$\divS\mathbb A:\Gamma\rightarrow\R^d$ be the unique vector field such 
that $\divS(\mathbb A^Ta)=a{\cdot}\divS\mathbb A$ for all constant vector 
fields $a:\Gamma\rightarrow\R^d$. {F}urthermore, the symbols ``$\,\cdot\,$'' 
and ``$\,:\,$'' denote a contraction between the one or 
two indices, respectively. Later, we will use also ``$\,\Vdots\,$'' for 
a contraction between three indices. Thus, componentwise, the second 
condition in \eqref{plast-dam-BC+} reads as $\sum_{j,k=1}^d\mathfrak{h}_{ijk}n_jn_k=0$.

Of course, an inhomogeneous variant of \eqref{plast-dam-BC+}
or some mixed Dirichlet/Neumann conditions in the normal/tangent
conditions could be considered with straightforward modifications 
of the following text. We will consider an initial-value problem 
for \eqref{plast-dam}--\eqref{plast-dam-BC} by asking for 
\begin{align}\label{plast-dam-IC}
u(0)=u_0,\ \ \ \ \ \pi(0)=\pi_0,\ \ \text{ and }\ \ \zeta(0)=\zeta_0.
\end{align}
In fact, as $\DT u$ does not occur in \eqref{plast-dam}, $u_0$ is 
rather formal and will essentially be determined by $\pi_0$ and 
$\zeta_0$ via \eqref{ass-IC-stable} below.

The system \eq{plast-dam} with the boundary conditions \eq{plast-dam-BC}
has, in its weak formulation, the structure of an abstract 
Biot equation (or here rather inclusion): 
\begin{align}\label{Biot}
\partial_{\DT q}\scrR(q;\DT q)+
\partial\scrE(t,q)\ni0
\end{align}
with suitable time-dependent stored-energy functional $\scrE$ and the 
state-dependent (pseudo)potential of dissipative forces $\scrR$. 
Equally, one can write \eq{Biot} as a generalized gradient flow 
\begin{align}
\DT q\in \partial_{\xi}\scrR^*\big(q;-\partial\scrE(t,q)\big)
\end{align}
where $\xi\mapsto\scrR^*(q;\xi)$ denotes the conjugate functional 
to $v\mapsto\scrR(q;v)$.

The perfect-plasticity model itself received considerable attention already 
a long time ago, see e.g.~in 
\cite{BaMiRo12QSSP,DaDeMo06QEPL,EboRed04MPPP,John76ETPP,Maug92TPF,Rep96EFEM}.
The peculiarity is that the displacement no longer lives in
the conventional Sobolev $H^1$-space but rather in the 
space of \emph{functions with bounded 
deformations} introduced by Suquet \cite{Suqu78ERSE}, defined as
\begin{align}\label{BD}
\mathrm{BD}(\barOmega;\R^d):=\big\{u\!\in\!L^1(\Omega;\R^d);\ 
e(u)\!\in\!\mathrm{Meas}(\barOmega;\R^{d\ti d}_{\mathrm{sym}})\big\},
\end{align}
where $\mathrm{Meas}(\barOmega)\cong C(\barOmega)^*$ denotes the space of 
Borel measures on the closure of $\Omega$. The other notation we will use is 
rather standard: beside the standard notation for the Lebesgue $L^p$-space we 
already used in \eqref{BD} for $p=1$, we further use $W^{k,p}$ for Sobolev space
whose $k$-th derivatives are in $L^p$-spaces, the abbreviation $H^k=W^{k,2}$, 
and $L^p(0,T;X)$ for Bochner spaces of Bochner-measurable mappings $(0,T)\to X$ 
with $X$ a Banach space. Also, $W^{k,p}(0,T;X)$ denotes the Banach space of 
mappings from $L^p(0,T;X)$ whose $k$-th distributional derivative in time is 
also in $L^p(0,T;X)$. Further, $C([0,T];X)$ and $C_\text{weak}([0,T];X)$ will 
denote the Banach space of continuous and weakly continuous mappings 
$[0,T]\to X$, respectively. Moreover, we denote by ${\rm BV}([0,T];X)$ the 
Banach space of the mappings $[0,T]\to X$ that have 
a bounded variation on $[0,T]$, and by ${\rm B}([0,T];X)$ the space of 
Bochner measurable, everywhere defined, and bounded mappings $[0,T]\to X$.

After considering smooth time-dependent Dirichlet boundary 
conditions $\wD$ on $\Sdir$ which allows for an extension onto $Q$, let
us denote it by $\uD$, such that 
\begin{subequations}\label{uD=0-on-GN}\begin{align}
&&&&&\big(\mathbb C (\zeta)e(\uD)
-\mathrm{div}\,\hD\big){\cdot}\vec{n}
-\divS(\hD\vec{n})=0&&\text{on }\ \GN,&&&&
\\[.2em]&&&&&
\hD{:}(\vec{n}\otimes\vec{n})=0\ \ \ \ \text{ with }\ \
\hD=\mathbb H\nabla e(\uD)&&\text{on }\ \Gamma&&&&
\end{align}\end{subequations}
for any admissible $\zeta$,
and making a substitution of $u+\uD$ instead of
$u$ into \eqref{plast-dam}--\eqref{plast-dam-BC}, we arrive to 
the problem with time-constant (even homogeneous) Dirichlet boundary
conditions. More specifically, 
\begin{subequations}\label{subst-zero-Dirichlet}
\begin{align}&\label{subst-zero-Dirichlet1}
e_\mathrm{el}\ \,\text{ in \eqref{plast-dam12} replaces by }\
e_\mathrm{el}=e(u{+}\uD){-}\pi,\ \text{ and}
\\&\label{subst-zero-Dirichlet2}
\wD\ \text{ in \eqref{plast-dam-BC1} replaces by }0.
\end{align}\end{subequations}
The state space is then the Banach space
\begin{subequations}\label{seismic+}
\begin{align}\label{seismic-U+}
&U:=\big\{(u,\pi,\zeta)\!\in\!
\mathrm{BD}(\barOmega;\R^d){\ti}
\mathrm{Meas}(\barOmega;\R^{d\ti d}_{\mathrm{dev}}){\ti}W^{1,r}(\Omega);
\\&\hspace{6em}
e(u){-}\pi\!\in\!H^1(\Omega;\R^{d\ti d}_{\mathrm{sym}}),\ \ \
u
\ootimes\vec{n}\d S\!+\pi=0\ \text{ on }\ \GD\big\},
\nonumber
\intertext{where $a\ootimes b$ means the symmetrized tensorial product
$\frac12(a\otimes b+b\otimes a)$, and the functionals governing the problem 
\eqref{Biot} leading to \eqref{plast-dam}--\eqref{plast-dam-BC} with the
substitution \eqref{subst-zero-Dirichlet} are:
}\nonumber\\[-3em]
&\scrE(t,u,\pi,\zeta)
:=\left\{\begin{array}{ll}
\displaystyle{\!\!
\int_{\Omega}\,\frac12\mathbb C(\zeta)\big(e(u{+}\uD(t)){-}\pi\big):\big(e(u{+}\uD(t)){-}\pi\big)
}\hspace*{-8em}
\\[-.2em]
\displaystyle{\hspace*{1em}
+\frac12\mathbb H\nabla(e(u{+}\uD(t)){-}\pi)\Vdots\nabla(e(u{+}\uD(t)){-}\pi)
}\hspace*{-8em}
\\[-.2em]
\displaystyle{\hspace*{1em}-b(\zeta)-g(t){\cdot}u
+\kappa\,\Big(\frac12|\nabla\zeta|^2{+}\frac\eps r|\nabla\zeta|^r\Big)
\dd x}\hspace*{-8em}
\\[-.2em]
\displaystyle{\hspace*{1em}
-\!\int_{\GN}\!\!\frM(t){\cdot}u\dd S}
\hspace*{-2em}&
\text{if }\zeta\!\in\![0,1]\text{ a.e.\ on }\Omega,
\\[.1em]
\qquad\quad\infty&\text{otherwise,}
\end{array}\right.
\label{seismic-E+}
\\
\label{seismic-R+}
&
\scrR(\zeta;
\DT\pi,\DT\zeta):=\int_{\barOmega}\!\big[\Indic_{S(\zeta)}^*(\DT\pi)\big](\d x)
+\int_\Omega\!a(\DT\zeta)
\dd x,
\end{align}\end{subequations}
where $\Indic_{S(\zeta)}^*$ denotes the conjugate to the indicator function 
$\Indic_{S(\zeta)}$ to the convex set $S(\zeta)$ and where the first integral in 
\eqref{seismic-R+} is an integral of a Borel measure;
counting the assumption \eqref{ass-S} below, this measure is $\sY(\zeta)|\DT\pi|$ with 
$|\DT\pi|$ the total variation of $\DT\pi$.
The norm on $U$ is 
\begin{align*}
\big\|(u,\pi,\zeta)\big\|^{}_U&:=\|u\|_{L^1(\Omega;\R^d)}
+\|e(u)\|_{\mathrm{Meas}(\bar\Omega;\R^{d\times d}_\mathrm{sym})}
\\&\ \ \ \ +\|\pi\|_{\mathrm{Meas}(\Omega;\R^{d\times d}_\mathrm{dev})}
+ \|e(u){-}\pi\|_{H^1(\Omega;\R^{d\times d}_\mathrm{sym})}
+\|\zeta\|_{W^{1,r}(\Omega)}.
\end{align*}

We can now state the weak formulation of the initial-boundary-value problem 
\eqref{plast-dam}--\eqref{plast-dam-IC}. As for the plastic part, we use 
the concept of the so-called energetic solution devised by Mielke and Theil 
\cite{MieThe04RIHM}, cf.\ also \cite{Miel05ERIS,MieRou15RIPT}, based on the 
energy (in)equality and the so-called stability and further employed in the 
viscous context in \cite{Roub09RIPV} with the stability condition modified to 
a semi-stability, cf.\ \eqref{semi-stab} below. Another feature of the 
following definition is that we rely on a regularity of the damage $\zeta$ so 
that $\mathrm{div}((1{+}\eps|\nabla\zeta|^{r-2})\nabla\zeta)$
is in duality with $\DT\zeta$ and thus, in fact, the damage flow rule 
\eqref{plast-dam13} holds even a.e.\ $Q$. Actually, we do not need
such regularity for the definition itself because
the usual weak formulation of \eqref{plast-dam13},
which would involve (not well-controlled) $\nabla\DT\zeta$ resulted from 
usage of Green's formula, could be still treated by applying a by-part
integration in time to get rid off the term 
$((1{+}\eps|\nabla\zeta|^{r-2})\nabla\zeta)\cdot\nabla\DT\zeta$.
Rather, this regularity is essential for the energy conservation.

\begin{definition}[Weak solution]\label{weak-sln}
The triple $(u,\pi,\zeta)$ 
with 
\begin{subequations}
\begin{align} 
&u\in\mathrm{B}([0,T];\mathrm{BD}(\barOmega;\R^d)),
\\
&\pi\in\mathrm{B}([0,T];\mathrm{Meas}(\barOmega;\R^{d\ti d}_{\mathrm{dev}}))\cap 
\mathrm{BV}([0,T];\mathrm{Meas}(\barOmega;\R^{d\ti d}_\mathrm{dev})),
\\&\zeta\in\mathrm{B}([0,T];W^{1,r}(\Omega))\cap H^1(0,T;L^2(\Omega))
\cap C([0,T]{\times}\barOmega)
\intertext{such that also}
&e_\mathrm{el}=e(u{+}\uD)-\pi\in\mathrm{B}([0,T];H^1(\Omega;\R^{d\times d}))
\ \ \text{ and }
\\&\label{H2-zeta}
\mathrm{div}\big((1{+}\eps|\nabla\zeta|^{r-2})\nabla\zeta\big)\in L^2(Q)
\end{align}
\end{subequations}
is called a weak solution to the initial-boundary-value problem 
\eqref{plast-dam}--\eqref{plast-dam-IC} with the 
substitution \eqref{subst-zero-Dirichlet} if:\\ 
\Item{(i)}{the semi-stability}
\begin{subequations}\label{def-of-weak-sln}
\begin{align}\nonumber\\[-2.5em]
&\label{semi-stab}
\scrE(t,u(t),\pi(t),\zeta(t))\le\scrE(t,\wt u,\wt\pi,\zeta(t))
+\scrR(\zeta(t);\wt\pi-\pi(t),0)
\end{align}
\Item{}{ holds for all $t\in[0,T]$ 
and for all $(\wt u,\wt\pi)\in\mathrm{BD}(\barOmega;\R^d){\ti}
\mathrm{Meas}(\barOmega;\R^{d\ti d}_{\mathrm{dev}})$ with $u
\ootimes\vec{n}\d S\!+\pi=0$ on $\GD$ and with  
$e(u){-}\pi\!\in\!H^1(\Omega;\R^{d\ti d}_{\mathrm{sym}})$,}
\Item{(ii)}{the variational inequality}
\begin{align}\label{flow-rule-weak}
&\int_Qa(v)+\Big(\frac12\mathbb C'(\zeta)
e_\mathrm{el}:e_\mathrm{el}
-\kappa\,
\mathrm{div}\big((1{+}\eps|\nabla\zeta|^{r-2})\nabla\zeta\big)
\\[-.7em]&\nonumber
\hspace{11em}-b'(\zeta)+\xi\Big)(v-\DT\zeta)
\dd x\dd t
\ge\int_Qa(\DT\zeta)\dd x\dd t,
\end{align}
\Item{}{holds for all $v\!\in\!L^2(Q)$ and 
some $\xi\!\in\!L^2(Q)$ such that 
$\xi\!\in\!N_{[0,1]}(\zeta)$ a.e.\ on $Q$,}
\Item{(iii)}{the energy equality 
}
\begin{align}\label{engr-bal}
&\scrE(T,u(T),\pi(T),\zeta(T))
+\int_{[0,T]\times\barOmega}\big[\Indic_{S(\zeta)}^*(\DT\pi)\big](\d x\d t)
+\int_Q\widehat a(\DT\zeta)\dd x\d t
\\[-.4em]&\nonumber
\hspace{10em}=
\scrE(0,u_0,\pi_0,\zeta_0)
+\int_0^T\!\!\partial_t\scrE(t,u(t),\pi(t),\zeta(t))\dd t.
\end{align}
\Item{}{holds with $\widehat a:\R\to\R$ being the single-valued, continuous 
function defined by $\widehat a(z):=z\partial a(z)$.}
\Item{(iv)}{and also the initial conditions \eqref{plast-dam-IC} hold.} 
\end{subequations}
\end{definition}

Let us note that, counting cancellation of some terms in 
$\scrE(t,u(t),\pi(t),\zeta(t))-\scrE(t,\wt u,\wt\pi,\zeta(t))$, 
the semi-stability \eq{semi-stab} means that 
\begin{align}\label{semi-stab+}
&\int_{\Omega}
\,\frac12\mathbb C(\zeta(t))\big(e(u(t){+}2\uD^{}(t)){-}\pi(t)(t)\big):
\big(e(u(t)){-}\pi(t)\big)
\\[-.5em]\nonumber&\quad\ +
\frac12\mathbb H\nabla\big(e(u(t){+}2\uD^{}(t)){-}\pi(t)(t)\big)\Vdots
\nabla\big(e(u(t)){-}\pi(t)\big)
\dd x
\\\nonumber&\ \le\int_{\Omega}\,
\frac12\mathbb C(\zeta(t))\big(e(\wt u{+}2\uD^{}(t)){-}\wt\pi\big):
\big(e(\wt u){-}\wt\pi\big)
\\[-.3em]\nonumber
&\quad\ +
\frac12\mathbb H\nabla\big(e(\wt u{+}2\uD^{}(t)){-}\wt\pi\big)\Vdots
\nabla\big(e(\wt u){-}\wt\pi\big)\dd x
+\int_{\barOmega}\big[\Indic_{S(\zeta(t))}^*(\wt\pi{-}\pi(t))\big](\d x).\!\!\!
\end{align}
The last integral \eqref{semi-stab+} is not a Lebesgue integral but an 
integral according the measure $\Indic_{B_1}^*(\wt\pi{-}\pi(t))$. Due to the special
ansatz \eqref{ass-S} below, this integral will the total variation $|\wt\pi{-}\pi(t)|$,
namely $\int_{\barOmega}\sY(\zeta(t))|\wt\pi{-}\pi(t)|(\d x)$.
Similarly, the integral on the left-hand side of \eqref{engr-bal} 
equals $\int_{[0,T]\times\barOmega}\sY(\zeta)|\DT\pi|(\d x\d t)$.
Further note that, although traces of functions from 
$\mathrm{BD}(\barOmega;\R^d)$ 
are in $L^1(\Gamma;\R^d)$, one has to be aware of
jumps that can occur at the boundary, i.e.\ the measure $e(u)$ may
concentrate on the boundary $\Gamma$. Thus, the classical boundary
condition $u=0$ on $\GD$ arising 
by the additive shift 
\eqref{subst-zero-Dirichlet2}
is replaced by the more involved relation $u\ootimes \vec{n}\d S\!+\pi=0$ on 
$\GD$ in \eq{seismic-U+}. This relation has to be understood as an
equality of measures on $\GD$: 
\[
\forall\, \text{measurable } A\subset \GD: \quad 
\int_{A} u\ootimes \vec{n}\d S = \int_A \d \pi = \pi(A).
\]
The relation simply means that any jump of $u$ on the boundary has to
be due to a localized plastic deformation. Cf.\ \cite{DaDeMo06QEPL}
for analytical details. Eventually, let us comment the last term in \eqref{engr-bal}
which, in view of \eqref{seismic-E+}, involves the expression
\begin{align}\label{DTE}
&\partial_t\scrE(t,u,\pi,\zeta)=
\int_{\Omega}\,\mathbb C(\zeta)\big(e(u{+}\uD(t)){-}\pi\big):e(\DTuD(t))
\\[-.6em]\nonumber
&\qquad\qquad\ +\mathbb H\nabla(e(u{+}\uD(t)){-}\pi)\Vdots\nabla e(\DTuD(t))
-\DT g(t){\cdot}u\dd x
-\!\int_{\GN}\!\!\DT\frM(t){\cdot}u\dd S.
\end{align}

Let us collect the assumptions on the data and on the loading we will rely on,
some of them being already mentioned above:
\begin{subequations}\label{ass}
\begin{align}\label{ass-Omega}
&\Omega\subset\R^d\ \text{bounded $C^2$-domain, 
$\GD$ has a ($d{-}2)$ dimensional $C^2$-boundary,
}\!\!\!\!\!
\\\label{ass-a}&
a:\R\to\R\text{ convex, smooth on $\R{\setminus}\{0\}$, $\ a(0)=0$, and }
\\&\nonumber\qquad \exists\,\epsilon>0\ \forall z\!\in\!\R:
\ \ \epsilon|z|^2\le a(z)\le(1{+}|z|^2)/\epsilon, 
\\&
b:[0,1]\to\R\text{ continuously differentiable, non-decreasing, concave},
\label{ass-b}\\&\label{ass-C}
\mathbb C:[0,1]\to\R^{d\times d\times d\times d}\text{ continuously differentiable,
positive-semidefinite,
}\!\!\!
\\\nonumber&\qquad
\forall\,i,j,k,l=1,\dots,d:\ \ 
\mathbb C_{ijkl}(\cdot)=\mathbb C_{jikl}(\cdot)=\mathbb C_{klij}(\cdot),
\\&\nonumber\qquad\forall\,e\!\in\!\R^{d\times d}_\mathrm{sym}:
\ \mathbb C(\cdot)e{:}e:[0,1]\to\R\text{ non-decreasing, convex},
\\&\nonumber\qquad\exists\,\mathbb{C}^{}_{\mbox{\tiny\rm D}}(\zeta),\,c^{}_{\mbox{\tiny\rm S}}(\zeta):\quad
\mathbb{C}(\zeta)e:e=\mathbb{C}^{}_{\mbox{\tiny\rm D}}(\zeta)\mathrm{dev}\,e:\mathrm{dev}\,e
+c^{}_{\mbox{\tiny\rm S}}(\zeta)(\mathrm{tr}\,e)^2,
\\&\label{ass-H}\mathbb H\text{ positive definite,}\ \ 
\mathbb H_{ijkl}=\mathbb H_{jikl}=\mathbb H_{klij},
\\[-.5em]\nonumber&\qquad
\exists\,\mathbb{H}^{}_{\mbox{\tiny\rm D}},\,H^{}_{\mbox{\tiny\rm S}}:\qquad
\mathbb{H}\nabla e\Vdots\nabla e=
\mathbb{H}^{}_{\mbox{\tiny\rm D}}\nabla\mathrm{dev}\,e\Vdots\nabla\mathrm{dev}\,e
+H^{}_{\mbox{\tiny\rm S}}\nabla\mathrm{tr}\,e\cdot\nabla\mathrm{tr}\,e,
\\\label{ass-S}&S(\zeta)=\sY(\zeta)B_1,\ \ \ \sY:[0,1]\to(0,\infty)
\text{ continuous nondecreasing},
\\[-.1em]\nonumber&\qquad
\ \ \text{ with }\ B_1\subset\R^{d\times d}_\mathrm{dev}\text{ a unit ball},
\\\label{ass-load}
&\wD\!\in\!W^{1,1}(0,T;H^{3/2}(\GD;\R^d))\text{ and }
\exists\,\uD\!\in\!W^{1,1}(0,T;H^{2}(\Omega;\R^d))
\\&\nonumber\qquad
\text{ satisfying 
\eqref{uD=0-on-GN} and }\uD|^{}_{\GD}=\wD, 
\\&\nonumber\qquad
g\in W^{1,1}(0,T;L^1(\Omega;\R^d)),\ \ \frM\in W^{1,1}(0,T;L^1(\GN;\R^d)),
\\&\nonumber\qquad
\exists\,\sigma_{\mbox{\tiny\rm SL}}:[0,T]\to L^2(\Omega;\R^{d\times d}_{\rm sym})\ \exists\,\alpha>0:
\quad\sigma_{\mbox{\tiny\rm SL}}\vec{n}=g\ \text{ on }[0,T]{\times}\GN\ \text{ and}\!\!\!\!\!
\\\nonumber
&\hspace{4em}{\rm div}\,\sigma_{\mbox{\tiny\rm SL}}+f=0\ \text{ and }\ 
|{\rm dev}\,\sigma_{\mbox{\tiny\rm SL}}|\le\sY(0)-\alpha\ \text{ on }[0,T]{\times}\Omega,
\\\label{ass-IC-stable}&(u_0,\pi_0,\zeta_0)\in \mathrm{BD}(\barOmega;\R^d){\ti}
\mathrm{Meas}(\barOmega;\R^{d\ti d}_{\mathrm{dev}}){\ti}W^{1,r}(\Omega),\ \ 
\\\nonumber&\qquad0\le\zeta_0\le1\ \text{ a.e.\ on }\Omega,\ \ \text{ and }
\\\nonumber&\qquad
\forall(\wt u,\wt\pi)\in\mathrm{BD}(\barOmega;\R^d){\ti}
\mathrm{Meas}(\barOmega;\R^{d\ti d}_{\mathrm{dev}}),\
\\&\nonumber\hspace{4em}e(\wt u){-}\wt\pi\!\in\!H^1(\Omega;\R^{d\ti d}_{\mathrm{sym}}),\ \
\wt u
\ootimes\vec{n}\,\d S\!+\wt\pi=0\text{ on }\GD\!:\ \ 
\\\nonumber&\qquad\qquad
\scrE(0,u_0,\pi_0,\zeta_0)\le\scrE(0,\wt u,\wt\pi,\zeta_0)
+\scrR(\zeta_0;0,\wt\pi{-}\pi_0),
\nonumber\\
&\kappa>0,\ \ \eps>0,
\ \ r>d.
\end{align}
\end{subequations}
The smoothness assumption \eq{ass-Omega} and the ``elastic'' invariance of 
the orthogonal subspaces of deviatoric and volumetric components 
(\ref{ass}d,e) copy the assumptions used in 
\cite{DaDeMo06QEPL} for perfect plasticity in simple materials without damage
in a variant with spatially varying yield stress as in 
\cite{DaDeSo11QECC,FraGia12SSHE,Solo09QEPN}.
The stress $\sigma_{\mbox{\tiny\rm SL}}$ in the condition \eqref{ass-load} 
qualifies the loading be $f$ and $g$ in such a way so that the infinite sliding of some 
parts of body is excluded; this is a usual requirement called a safe-load 
condition, connected to perfect plasticity, here adopted to the situation that 
the yield stress $\sY$ may vary with damage similarly as 
in \cite[Remark~2.9]{FraGia12SSHE}. It should be also remarked that 
this safe-load condition works similarly for nonsimple materials.
Further note that \eqref{ass-IC-stable} represents in particular the semi-stability 
of the initial condition and makes, with other assumption, the energy
conservation \eqref{engr-bal} possible. Note also that \eqref{ass-a}
ensures that $\widehat a$ used \eqref{engr-bal} is single-valued
although $a$ itself may be set-valued at 0. In \eqref{ass-S},
one can easily consider a bit more general situation when 
$B_1$ would be convex, closed, and $0\in\text{int}\,B_1$.

%
%
%
The main analytical result justifying rigorously the model 
\eqref{plast-dam}--\eqref{plast-dam-IC} is:


\medskip

\begin{theorem}
Under the assumptions \eqref{ass}, at least one weak solution to 
the initial-boundary-value problem \eqref{plast-dam}--\eqref{plast-dam-IC}
according to Definition~\ref{weak-sln} does exist.
\end{theorem}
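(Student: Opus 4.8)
The plan is to prove existence by a \emph{fractional-step (semi-implicit) time discretisation}: each time level is reached by solving two successive \emph{convex} minimisation problems — first the rate-independent plasticity part at frozen damage, then the rate-dependent damage part at the updated $(u,\pi)$ — and then to pass to the limit as the time step $\tau\to0$, recovering the three ingredients of Definition~\ref{weak-sln} (semi-stability, damage variational inequality, energy equality) separately.

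\emph{Step 1 (time-discrete scheme and its solvability).} For a time step $\tau=T/N$ with $N\in\N$ put $t_\tau^k:=k\tau$ and, starting from $(u_\tau^0,\pi_\tau^0,\zeta_\tau^0):=(u_0,\pi_0,\zeta_0)$, define recursively $(u_\tau^k,\pi_\tau^k)$ as a minimiser of $(u,\pi)\mapsto\scrE(t_\tau^k,u,\pi,\zeta_\tau^{k-1})+\scrR(\zeta_\tau^{k-1};\pi{-}\pi_\tau^{k-1},0)$ over the admissible set of \eqref{seismic-U+}, and then $\zeta_\tau^k$ as a minimiser of $\zeta\mapsto\scrE(t_\tau^k,u_\tau^k,\pi_\tau^k,\zeta)+\tau\int_\Omega a\big((\zeta{-}\zeta_\tau^{k-1})/\tau\big)\dd x$ over $W^{1,r}(\Omega)$. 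Both functionals are convex (for the first by positive semidefiniteness of $\mathbb C$ and convexity of the support function $\scrR(\zeta;\cdot,0)$ in $\pi$; for the second by \eqref{ass-b}, \eqref{ass-C} and convexity of $a$) and weakly/weakly-$*$ lower semicontinuous, and the constraint $\zeta\in[0,1]$ is closed, so the direct method gives minimisers once coercivity is checked. For the damage step coercivity on $W^{1,r}(\Omega)$ is immediate from the quadratic lower bound on $a$ in \eqref{ass-a} and the $\frac{\kappa\eps}{r}|\nabla\zeta|^r$ term; for the plasticity step coercivity on $\mathrm{BD}(\barOmega;\R^d){\ti}\mathrm{Meas}(\barOmega;\R^{d\ti d}_{\mathrm{dev}})$ is the classical consequence of the \emph{safe-load condition} in \eqref{ass-load}: subtracting $\sigma_{\mathrm{SL}}(t_\tau^k)$ and using $|\dev\sigma_{\mathrm{SL}}|\le\sY(0){-}\alpha<\sY(\zeta_\tau^{k-1})$ leaves a term coercive in the total variation of $\pi$, while the hyperstress term makes $e(u{+}\uD){-}\pi$ coercive in $H^1$ and a Korn-type inequality in $\mathrm{BD}$ then controls $u$. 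By minimality, the plasticity step yields the discrete momentum balance and the discrete semi-stability (with $\zeta_\tau^{k-1}$ frozen), and the damage step yields a discrete version of the variational inequality \eqref{flow-rule-weak} together with, by elliptic regularity/difference quotients for the $r$-Laplacian-type operator with an $L^2$ right-hand side, a uniform bound in the sense of \eqref{H2-zeta}.

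\emph{Step 2 (a priori estimates and compactness).} Testing discrete minimality by the previous state and telescoping over $k$ yields the discrete energy inequality; together with one more use of the safe-load estimate it gives bounds \emph{uniform in} $\tau$ for $u_\tau^k$ in $\mathrm{BD}$, for $\pi_\tau^k$ in $\mathrm{Meas}$, for $\sum_k\|\pi_\tau^k{-}\pi_\tau^{k-1}\|_{\mathrm{Meas}}$ (a $\mathrm{BV}$-in-time bound, since $\sY(\cdot)\ge\sY(0)>0$), for $e(u_\tau^k{+}\uD){-}\pi_\tau^k$ in $H^1$, for $\zeta_\tau^k$ in $W^{1,r}(\Omega)$, and for $\sum_k\tau\|(\zeta_\tau^k{-}\zeta_\tau^{k-1})/\tau\|_{L^2}^2$ (an $H^1(0,T;L^2)$ bound). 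Introducing the piecewise-constant and piecewise-affine interpolants, one extracts a subsequence: Helly's selection principle for $\pi$, Aubin--Lions together with $W^{1,r}(\Omega)\Subset C(\barOmega)$ for $r>d$ for $\zeta$ (hence $\zeta_\tau\to\zeta$ strongly in $C([0,T]{\times}\barOmega)$, and the ``delayed'' interpolant of $\zeta$ has the same limit because the two differ by $O(\sqrt\tau)$ in $L^2(Q)$), and weak-$*$ compactness for $u$ and for $e_\mathrm{el}=e(u{+}\uD){-}\pi$ in $H^1$. The limit $(u,\pi,\zeta)$ then has the regularity required in Definition~\ref{weak-sln}.

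\emph{Step 3 (limit passage).} Passing to the limit in the discrete semi-stability requires \emph{no nontrivial mutual-recovery sequence} — the competitor $(\wt u,\wt\pi)$ in \eqref{semi-stab} is admissible at every time — and uses only the strong convergence of $\zeta_\tau$ to pass $\mathbb C(\zeta_\tau)$ and $\sY(\zeta_\tau)$ together with weak lower semicontinuity of the $H^1$-quadratic part; this yields \eqref{semi-stab} first for a.e.\ $t$ and then, via semicontinuity of $t\mapsto\scrE(t,q)$ and the energy balance, for all $t\in[0,T]$. In the discrete damage variational inequality one passes to the limit using strong convergence of $\nabla\zeta_\tau$ (from uniform monotonicity of the $r$-Laplacian-type operator together with convergence of the energies) and strong $L^2(Q)$-convergence of $e_{\mathrm{el},\tau}$ (from uniform convexity of the $\mathbb C$- and $\mathbb H$-terms plus convergence of the stored energy), obtaining \eqref{flow-rule-weak} with a limit multiplier $\xi\in N_{[0,1]}(\zeta)$, and, since every remaining term of the damage flow rule then lies in $L^2(Q)$, also \eqref{H2-zeta}. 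Finally the energy equality \eqref{engr-bal}: the inequality ``$\le$'' follows by weak lower semicontinuity from the discrete energy balance, using the load regularity from \eqref{ass-load} to control $\int_0^T\partial_t\scrE$ (cf.\ \eqref{DTE}); the reverse inequality ``$\ge$'' is the classical consequence of combining the semi-stability \eqref{semi-stab} (tested along a partition, in a Riemann-sum argument), the limit momentum balance, and the damage variational inequality tested by $v=\DT\zeta\pm w$ — the standard ``two partial principles'' argument of the energetic-solution theory, which here avoids any chain rule for $\scrE$ along the BV-in-time trajectory $\pi$.

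\emph{Main obstacle.} The delicate point is obtaining the energy \emph{equality} rather than a mere inequality: it forces strong convergence of $\zeta_\tau$ (so that the nonlinear couplings $\mathbb C(\zeta)$ and $\sY(\zeta)$ pass to the limit both in the stored energy and in the dissipation $\int\sY(\zeta)|\DT\pi|$) and strong $L^2(Q)$-convergence of the elastic strain $e_{\mathrm{el},\tau}$ (so that $\frac12\mathbb C'(\zeta)e_\mathrm{el}{:}e_\mathrm{el}$ is identified in \eqref{flow-rule-weak}), while $u$ and $\pi$ individually may concentrate and only live in $\mathrm{BD}$ and $\mathrm{Meas}$; the structural fact that $e_\mathrm{el}=e(u{+}\uD){-}\pi$ stays in $H^1$ — a consequence of the hyperstress regularisation $\mathbb H$ — is precisely what makes these strong convergences, and hence the energy balance, reachable. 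Secondary difficulties are the degeneracy of the damage gradient term at $\nabla\zeta=0$, handled by monotonicity of the whole $r$-Laplacian-type operator, and the staggered ``delay'' of $\zeta$ in the plasticity step, which disappears in the limit thanks to the $H^1(0,T;L^2)$ bound on $\zeta$.
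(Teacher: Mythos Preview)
Your overall strategy matches the paper's, but there is a genuine gap in Step~3 at the limit passage in the semi-stability. You claim that ``no nontrivial mutual-recovery sequence'' is needed because the competitor $(\wt u,\wt\pi)$ is admissible at every discrete level. This fails at the dissipation term: with $(\wt u,\wt\pi)$ fixed, the quantity
$\scrR(\underline\zeta_\tau(t);\wt\pi{-}\barpi_\tau(t),0)=\int_{\barOmega}\sY(\underline\zeta_\tau(t))\,|\wt\pi{-}\barpi_\tau(t)|(\d x)$
is only weakly-$*$ \emph{lower} semicontinuous in $\barpi_\tau(t)\in\mathrm{Meas}(\barOmega;\R^{d\times d}_{\rm dev})$ (total variation can drop in the limit), whereas passing the inequality \eqref{semi-stab-disc} to \eqref{semi-stab} requires an \emph{upper} bound on its $\limsup$. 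The paper cures this with the explicit recovery sequence $\wt u_\tau=\baru_\tau(t)+\wt u-u(t)$, $\wt\pi_\tau=\barpi_\tau(t)+\wt\pi-\pi(t)$, which makes $\wt\pi_\tau-\barpi_\tau(t)=\wt\pi-\pi(t)$ \emph{independent of $\tau$}; the binomial identity $\mathbb C\wt e{:}\wt e-\mathbb C e{:}e=\mathbb C(\wt e{+}e){:}(\wt e{-}e)$ (and its $\mathbb H$-analogue) then turns the difference $\scrE(t,\wt u_\tau,\wt\pi_\tau,\underline\zeta_\tau)-\scrE(t,\baru_\tau,\barpi_\tau,\underline\zeta_\tau)$ into a product of a weakly convergent factor and a fixed one, so the whole expression converges.

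A second, related gap is your route to strong $L^2(Q)$-convergence of $e_{\mathrm{el},\tau}$ via ``uniform convexity plus convergence of the stored energy'': at that stage only the discrete energy \emph{inequality} is available, so convergence of the energies is not yet known and the argument is circular. The paper proceeds differently: it proves \emph{uniqueness} of the elastic strain for the limit problem at fixed $\zeta$ (a Gronwall estimate on $\langle\mathbb H\nabla(e_{\rm el}^{(1)}{-}e_{\rm el}^{(2)}),\nabla(e_{\rm el}^{(1)}{-}e_{\rm el}^{(2)})\rangle$, using $\DT\zeta\in L^2(Q)$ to control the $\mathbb C'(\zeta)$-term). This forces the pointwise-in-$t$ weak $H^1$-limit of $\bare_{\mathrm{el},\tau}(t)$ to be $e_{\mathrm{el}}(t)$; compact embedding $H^1\Subset L^{6-\epsilon}$ plus dominated convergence then yields the strong convergence needed to identify $\frac12\mathbb C'(\zeta)e_{\rm el}{:}e_{\rm el}$ in \eqref{flow-rule-weak}. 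Likewise, the paper never proves strong convergence of $\nabla\barzeta_\tau$: it relies instead on the $L^2(Q)$-bound on $\mathrm{div}((1{+}\eps|\nabla\barzeta_\tau|^{r-2})\nabla\barzeta_\tau)$ (obtained by testing the discrete damage flow rule with that very expression) together with a $\limsup$ chain-rule estimate to handle the cross term with $\DT\zeta_\tau$.
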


\medskip

We will prove this existence result in Section~\ref{sec-disc} 
by a constructive
time discretisation method, cf.\ Lemma~\ref{lem-disc} with 
Proposition~\ref{prop-conv}, which later in Sections~\ref{sec-comp} and 
\eqref{sec-simul} allows for efficient computer implementation of the model.
The uniqueness of the solution however hardly can be expected.

\medskip

\begin{remark}[The dynamical model]\label{rem-dynam}
\upshape
During fast rupture, inertial effects may be not negligible and
even sometimes an important aspect of the model. Then, \eqref{plast-dam1}
augments by the inertial force $\varrho\DDT{u}$ with $\varrho>0$ 
denoting the mass density as
\begin{align}\label{plast-dam1-with-rho}
&\varrho\DDT{u}-
\mathrm{div}
\,\sigma=g
\ \ \ \text{ with }\ \
\sigma=\mathbb C (\zeta)e_\mathrm{el}-\mathrm{div}\,\mathfrak{h}.
\end{align}
Relying on that the inertial term $\varrho\DDT{u}$ is controlled in the space
$L^2(0,T;H^2(\Omega;\R^3)^*)\,$ $\cap$ $\,C_{\rm weak}([0,T];L^2(\Omega;\R^3))$
or actually even in a slightly better space counting that ${\rm dev}\,\sigma\in L^\infty(Q;\R^{d\times d}_{\rm sym})$, 
the weak formulation 
of \eqref{plast-dam1-with-rho} arising by double by-part integration
in time should accompany \eqref{def-of-weak-sln} with $\scrE$ 
augmented by the inertial energy $\int_\Omega\frac\varrho2|\DT u|^2\d x$
but with \eqref{semi-stab} holding only a.e.\ on $[0,T]$ and 
\eqref{engr-bal} only as an inequality.
%
The functional in \eqref{minimization-1} then augments by $\varrho\tau^{-2}|u-2u_\tau^{k-1}+u_\tau^{k-2}|^2/2$.
Actually, it seems a matter of a physically-explainable fact 
that some difficulties the energy conservation occurs probably 
due to integration of elastic waves with nonlinearly responding shear bands
even if a Kelvin-Voigt-type visco-elastic rheology would be involved, 
cf.\ also \cite[Remark 6]{Roub13TPP}. 
%
In this dynamical case, the fast damage phases and subsequent fast 
plastic slips, called (tectonic) \emph{earthquakes}, typically emit elastic 
(seismic) waves. However, although 
some justification on theoretical level, the computational modelling 
requires fine special techniques to suppress e.g.\ parasitic numerical 
attenuation and the direct combination of elastic waves with the inelastic 
processes 
is difficult.
\end{remark}

\medskip

\begin{remark}[A non-Hookean model]\label{rem-nonlin}
\upshape
The concept of nonsimple materials allows an important generalization 
that $\scrE(t,\cdot,\zeta,\cdot)$ is not quadratic and even nonconvex.
More specifically, instead of the coercive term 
$(e_\mathrm{el},\zeta)\mapsto\mathbb{C}(\zeta)e_\mathrm{el}{:}e_\mathrm{el}
=\frac12\lambda(\zeta)I_1^2+\mu(\zeta)I_2$ as used also here in 
\eqref{C-ass-implem} below, \cite{LyaMya84BECS} 
proposed 
\begin{align}\label{non-Hookean-model}
(e_\mathrm{el},\zeta)\mapsto
\frac12\lambda(\zeta)I_1^2+\mu(\zeta)I_2-\gamma(\zeta)I_1\sqrt{I_2}
\ \ \text{ with }\ 
I_1={\rm tr}\,e_\mathrm{el},\ 
\ I_2=|e_\mathrm{el}|^2.
\end{align}
The elastic stress is then 
$(\lambda(\zeta){-}\gamma(\zeta)\sqrt{I_2}){\rm tr}\,e_\mathrm{el}+
(2\mu(\zeta){-}\gamma(\zeta)I_1/\sqrt{I_2})e_\mathrm{el}$,
while the driving stress for damage is $\sigma_{\rm dam}
=\frac12\lambda'(\zeta)I_1^2+\mu'(\zeta)I_2-\gamma'(\zeta)I_1/\sqrt{I_2}$
and can now be positive even without the contribution of the $b$-term.
Such a model is widely used in geophysics where it is believed to 
be responsible for instability of heavily damaged rocks and leads to healing 
even without the $b$-term used in our model, but where it
is used without the $\mathbb H$-term and thus without any rigorous
justification of such models, cf.\ e.g.\ 
\cite{HaLyBZ11ESED,LRWS97NEBD} and references there. 
To preserve coercivity of the model due to boundary conditions and 
the $\mathbb H$-term, one can think about a certain softening 
under very large strain by replacing 2-homogeneous form \eqref{non-Hookean-model}
by an energy with only a linear growth
\begin{align}
(e_\mathrm{el},\zeta)\mapsto\frac{\lambda(\zeta)I_1^2
+2\mu(\zeta)I_2-2\gamma(\zeta)I_1\sqrt{I_2}}{\sqrt{4+\epsilon I_2}}
\end{align}
with $\epsilon>0$ presumably small. A certain conceptual inconsistency remains 
in damage-dependence of $\mathbb C$ but not of $\mathbb H$, although 
$\mathbb H$ is assumed to be only small in applications. Note that 
\eqref{minimization-1} then represents a coercive but non-convex minimization
problem and one should seek a global minimizer to ensure \eqref{semi-stab-disc}.
The nonsimple-material concept allows for a simple 
modification of the convergence proof in semistability and in the damage flow
by compactness: more specifically, the binomial trick in \eqref{semi-stab-proof}
is applied only to the dissipation and the $\mathbb H$-terms, while 
\eqref{uniqueness-of-stress} is even simpler because 
$\mathbb C'(\zeta)e_{\rm el}$ is now bounded in $L^\infty(\Omega;\R^{d\times d})$.
\end{remark}

\medskip

\begin{remark}[A simple-material model]\label{rem-simple}
\upshape
Considering $\mathbb H=0$ would bring various difficulties. In particular,
the $L^2(Q)$-estimate of the driving force 
$\frac12\mathbb C'(\zeta)e_\mathrm{el}{:}e_\mathrm{el}$, which would
need a regularity of $e_\mathrm{el}$ that however does not seem available
for plasticity models without hardening, would become problematic. Note that 
the higher integrability of $e_\mathrm{el}\otimes e_\mathrm{el}$ will be used
e.g.\ in \eqref{uniqueness-of-stress} and in \eqref{engr-bal-dam} too.
One should note that the alternative idea to consider
a nonlinear damage independent contribution to the stress of the type
$+\eps|e_\mathrm{el}|^2e_\mathrm{el}$ would not allow to use the 
binomial trick in the Step 3 in the proof of Proposition~\ref{prop-conv}
below, while the strong convergence of $e_\mathrm{el}$ seems also not
obvious to prove.
A certain possibility might be in considering a 
visco-elastic Kelvin-Voigt model with the stress 
$\mathbb D(\zeta)\DT e_\mathrm{el}+
\mathbb C(\zeta,e_\mathrm{el})$ with a nonlinear, monotone 
$\mathbb C(\zeta,\cdot)$ having at most the growth
$|\mathbb C(\zeta,e_\mathrm{el})|\le C(1+|e_\mathrm{el}|^{1/2})$
so that $\int_0^1\partial_\zeta\mathbb C(\zeta,t e_\mathrm{el})\dd t$
can still be estimated in $L^2(Q)$ due to the $\mathbb D$-term
which can even depend on $\zeta$ as in \cite{MiRoZe10CDEV}.
\end{remark}

\section{The discretisation, its stability and convergence}\label{sec-disc}

To implement the initial-boundary-value problem 
\eqref{plast-dam}--\eqref{plast-dam-IC} computationally,
we need to make a time and space discretisation.

Let us first make only a time discretisation with, for notational simplicity,
a constant time step $\tau>0$. As the inertial effects are not considered 
and thus the system is only 1st-order in time,
the dependence of $\tau>0$ on the time levels is easy to consider for
numerical analysis and to implement (as actually used in 
Section~\ref{sec-comp} below).

As $\scrE$ is convex in terms of $(u,\pi)$ and separately in $\zeta$ too, and 
also as $\scrR$ additively splits $(\DT u,\DT\pi)$ from $\DT\zeta$,
the natural \emph{fractional-step strategy} leading to an efficient and numerically 
stable semi-implicit formula follows this splitting 
$(u,\pi)$ from $\zeta$. More specifically, 
it reads as
\begin{subequations}\label{plast-dam-disc}
\begin{align}\label{eq6:plast-dam1-disc}
&
\mathrm{div}\Big(
\mathbb C(\zeta_\tau^{k-1})e_{\mathrm{el},\tau}^k
-\mathrm{div}\,\mathfrak{h}_\tau^k\Big)+g_\tau^k=0
\\[-.1em]\nonumber
&\hspace{2em}\text{with }\ e_{\mathrm{el},\tau}^k=
e(u_\tau^k{+}\uD(k\tau)){-}\pi_\tau^k,\ \ \ \
\mathfrak{h}_\tau^k=\mathbb H\nabla e_{\mathrm{el},\tau}^k,\ \ \ \ 
g_\tau^k:=g(k\tau),
\\[-.1em]\label{plast-dam12-disc}
&
N_{S(\zeta_\tau^{k-1})}^{}\Big(\frac{\pi_\tau^k{-}\pi_\tau^{k-1}}{\tau}\Big)
\ni\mathrm{dev}\Big(\mathbb C (\zeta_\tau^{k-1})e_{\mathrm{el},\tau}^k
-\mathrm{div}\,\mathfrak{h}_\tau^k\Big),
\\[-.1em]\label{plast-dam13-disc}
&\partial a\Big(\frac{\zeta_\tau^k{-}\zeta_\tau^{k-1}}{\tau}\Big)
+\frac12\mathbb C'(\zeta_\tau^k)
e_{\mathrm{el},\tau}^k:e_{\mathrm{el},\tau}^k
\\[-.1em]&\nonumber\qquad\qquad\qquad\
-\kappa\,
\mathrm{div}\big((1{+}\eps|\nabla\zeta_\tau^k|^{r-2})\nabla\zeta_\tau^k\big)
+N_{[0,1]}^{}(\zeta_\tau^k)\ni b'(\zeta_\tau^k),
\end{align}\end{subequations}
together with the corresponding boundary conditions 
\begin{subequations}\label{BC-disc}
\begin{align}
&u_\tau^k=
0 &&\text{on }\GD
,
\\&
\!\big(\mathbb C (\zeta_\tau^{k-1})e_{\mathrm{el},\tau}^k
-\mathrm{div}\,\mathfrak{h}_\tau^k\big)\cdot\vec{n}
-\divS(\mathfrak{h}_\tau^k\vec{n})=\frM_\tau^k
&&\text{on }\GN\ \ \text{with }\ \frM_\tau^k:=\frM(k\tau),
\\\label{BC-disc-zeta}
&
\nabla\zeta_\tau^k\cdot\vec{n}=0\ \ \ \ \text{ and }\ \ \ \ 
\mathfrak{h}_\tau^k{:}(\vec{n}\otimes\vec{n})=0&&\text{on }\Gamma,
\end{align}\end{subequations}
to be solved first for $(u_\tau^k,\pi_\tau^k)$ from 
(\ref{plast-dam-disc}a,b)--(\ref{BC-disc}a,b) and then for $\zeta_\tau^k$ from 
\eq{plast-dam13-disc}--\eq{BC-disc-zeta} recursively for $k=1,...,T/\tau$. 
Both these boundary-value problems
have potentials and thus leads to minimization problems.
Moreover, as $\mathbb C'$ and $-b'$ are nondecreasing
(again with respect to the L\"owner's ordering) and $a$ is convex 
as assumed in (\ref{ass}),
both these boundary-value problems leads to convex variational problems, 
cf.\ \eqref{minimization} below.

Let us define the piecewise affine interpolant $u_\tau$  by 
\begin{subequations}\label{def-of-interpolants}
\begin{align}
&u_\tau(t):=
\frac{t-(k{-}1)\tau}\tau u_\tau^k
+\frac{k\tau-t}\tau u_\tau^{k-1}
\quad\text{ for $t\in[(k{-}1)\tau,k\tau]\ $}
\intertext{with $\ k=0,...,T/\tau$. Besides, we define also the 
left-continuous piecewise constant interpolant $\baru_\tau$ and  the 
right-continuous piecewise constant interpolant $\underline u_\tau$
by}
&\baru_\tau(t):=u_\tau^k
\qquad\qquad\text{ for $t\in((k{-}1)\tau,k\tau]\ $,\ \ 
$k=1,...,T/\tau$},
\\
&\underline u_\tau(t):=u_\tau^{k-1}
\qquad\quad\ \text{for $t\in[(k{-}1)\tau,k\tau)\ $,\ \ 
$k=1,...,T/\tau$}.
\end{align}
\end{subequations}
Similarly, we define also $\pi_\tau$, $\barpi_\tau$, 
$\underline\pi_\tau$, $\barzeta_\tau$, $\zeta_\tau$, $\bar g_\tau$, 
etc. 

\medskip

\begin{lemma}[Existence and stability of discrete solutions]\label{lem-disc}
The recursive boun\-dary-value problem \eq{plast-dam-disc}--\eq{BC-disc}
has a weak solution 
$(u_\tau^k,\pi_\tau^k,\zeta_\tau^k)$ with $u_\tau^k\in
\mathrm{BD}(\barOmega;\R^d)$, $\pi_\tau^k\in W^{1,r}(\Omega)$,
and $\zeta_\tau^k\in\mathrm{Meas}(\barOmega;\R^{d\ti d}_{\mathrm{dev}})$
with $e_{\mathrm{el},\tau}^k=e(u_\tau^k){-}\pi_\tau^k\in H^1(\Omega;\R^{d\ti d}_{\mathrm{sym}})$ 
satisfying the a-priori estimates 
\begin{subequations}\label{est}
\begin{align}
&\big\|\baru_\tau\big\|_{L^\infty(0,T;\mathrm{BD}(\barOmega;\R^d))}\le C,
\\&\big\|\barpi_\tau\big\|_{L^\infty(0,T;\mathrm{Meas}(\barOmega;\R^{d\ti d}_{\mathrm{dev}}))\,\cap\,
\mathrm{BV}([0,T];L^1(\Omega;\R^{d\ti d}_{\mathrm{dev}}))}\le C,
\\&\big\|e(u_\tau){-}\pi_\tau\big\|_{L^\infty(0,T;H^1(\Omega;\R^{d\ti d}_{\mathrm{sym}}))}\le C,
\\\label{est-Delta-zeta}&\big\|\zeta_\tau\big\|_{L^\infty(0,T;W^{1,r}(\Omega))\,\cap\,H^1(0,T;L^2(\Omega))}\le C,
\\\label{est-Delta-zeta+}
&\big\|\mathrm{div}((1{+}\eps|\nabla\barzeta_\tau|^{r-2})\nabla\barzeta_\tau)\big\|_{L^2(Q)}\le C.
\end{align}
\end{subequations}
\end{lemma}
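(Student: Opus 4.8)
The plan is to prove existence of the discrete solution by the splitting structure and then derive the a-priori estimates from a discrete energy inequality.

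First I would establish solvability of each incremental step. For fixed $k$ and given $\zeta_\tau^{k-1}\in W^{1,r}(\Omega)$ with $0\le\zeta_\tau^{k-1}\le1$, the pair $(u_\tau^k,\pi_\tau^k)$ is sought as a minimizer of the functional
\begin{align*}
(u,\pi)\mapsto\scrE(k\tau,u,\pi,\zeta_\tau^{k-1})
+\tau\,\scrR\Big(\zeta_\tau^{k-1};\frac{\pi-\pi_\tau^{k-1}}{\tau},0\Big)
\end{align*}
over the admissible set in \eqref{seismic-U+}. This functional is convex (using positive-semidefiniteness of $\mathbb C(\zeta_\tau^{k-1})$ and positive-definiteness of $\mathbb H$) and, thanks to the $\mathbb H$-term controlling $e(u){-}\pi$ in $H^1$ together with the safe-load condition in \eqref{ass-load} providing coercivity on $\mathrm{BD}$ (the classical argument of Temam--Suquet, via $|{\rm dev}\,\sigma_{\rm SL}|\le\sY(0)-\alpha$), it is coercive on $U$; lower semicontinuity of the $\mathrm{BD}$-term and of the plastic dissipation measure is standard. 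Hence a minimizer exists, and its Euler--Lagrange (in)equality is precisely the weak form of \eqref{eq6:plast-dam1-disc}--\eqref{plast-dam12-disc}. Then, with $e_{{\rm el},\tau}^k$ now fixed, $\zeta_\tau^k$ is sought as the minimizer of
\begin{align*}
\zeta\mapsto\int_\Omega\Big(\tfrac12\mathbb C(\zeta)e_{{\rm el},\tau}^k:e_{{\rm el},\tau}^k-b(\zeta)
+\kappa\big(\tfrac12|\nabla\zeta|^2+\tfrac\eps r|\nabla\zeta|^r\big)\Big)\dd x
+\tau\!\int_\Omega a\Big(\frac{\zeta-\zeta_\tau^{k-1}}{\tau}\Big)\dd x
\end{align*}
over $\{\zeta\in W^{1,r}(\Omega):0\le\zeta\le1\}$; this is convex by (\ref{ass}b,c) ($\mathbb C(\cdot)e:e$ convex, $b$ concave, $a$ convex), weakly lower semicontinuous and coercive in $W^{1,r}$, so a minimizer exists. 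Its optimality condition gives \eqref{plast-dam13-disc}--\eqref{BC-disc-zeta}; here the constraint $0\le\zeta_\tau^k\le1$ together with the $r$-Laplacian regularization yields, by a standard difference-quotient/bootstrap argument, that $\mathrm{div}((1{+}\eps|\nabla\zeta_\tau^k|^{r-2})\nabla\zeta_\tau^k)\in L^2(\Omega)$ and hence the flow rule holds a.e. and $N_{[0,1]}(\zeta_\tau^k)$ is represented by an $L^2$-function.

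Next I would derive the a-priori estimates by testing the two incremental problems. Testing \eqref{plast-dam12-disc} by $\pi_\tau^k-\pi_\tau^{k-1}$ and \eqref{eq6:plast-dam1-disc} by $u_\tau^k-u_\tau^{k-1}$, using convexity of $e_{\rm el}\mapsto\scrE$ and of the plastic term, one obtains the discrete energy inequality for the elastoplastic step
\begin{align*}
\scrE(k\tau,u_\tau^k,\pi_\tau^k,\zeta_\tau^{k-1})
+\int_{\barOmega}\big[\Indic_{S(\zeta_\tau^{k-1})}^*(\pi_\tau^k{-}\pi_\tau^{k-1})\big](\d x)
\le\scrE(k\tau,u_\tau^{k-1},\pi_\tau^{k-1},\zeta_\tau^{k-1})+\text{(load increment)},
\end{align*}
while testing \eqref{plast-dam13-disc} by $\zeta_\tau^k-\zeta_\tau^{k-1}$ and using convexity of $a$ and $\mathbb C(\cdot)e:e$ and concavity of $b$ gives the corresponding inequality for the damage step, with $\scrR$-terms bounding $\int_\Omega a((\zeta_\tau^k{-}\zeta_\tau^{k-1})/\tau)\,\tau$. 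Adding the two and summing over $k$, the mixed term $\mathbb C(\zeta_\tau^{k-1})$ vs.\ $\mathbb C(\zeta_\tau^k)$ telescopes favourably because $z\mapsto\tfrac12\mathbb C(z)e:e-b(z)$ is handled by its convexity/concavity, producing a total discrete energy balance controlling $\scrE(\cdot)$ uniformly. From this: boundedness of $\scrE$ gives (\ref{est}a,c) and the $L^\infty(0,T;W^{1,r})$ part of \eqref{est-Delta-zeta} (via coercivity as above and the safe-load estimate for the $\mathrm{BD}$-norm); the dissipation sum gives the $\mathrm{BV}$-bound on $\pi_\tau$ in \eqref{est} and, from the quadratic lower bound $a(z)\ge\epsilon|z|^2$ in \eqref{ass-a}, the $H^1(0,T;L^2)$-bound in \eqref{est-Delta-zeta}. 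Finally \eqref{est-Delta-zeta+} follows by testing \eqref{plast-dam13-disc} by $\mathrm{div}((1{+}\eps|\nabla\zeta_\tau^k|^{r-2})\nabla\zeta_\tau^k)$, estimating the driving force $\tfrac12\mathbb C'(\zeta_\tau^k)e_{{\rm el},\tau}^k:e_{{\rm el},\tau}^k$ in $L^2$ through the Sobolev embedding $H^1(\Omega)\hookrightarrow L^4(\Omega)$ (so $e_{{\rm el},\tau}^k\otimes e_{{\rm el},\tau}^k\in L^2$), and absorbing, using the monotonicity sign of $N_{[0,1]}(\zeta_\tau^k)$ against that test function.

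The main obstacle I expect is twofold: (i) obtaining the uniform $\mathrm{BD}$-bound on $\baru_\tau$ and the $\mathrm{Meas}$-bound on $\barpi_\tau$ from the safe-load condition, since the energy $\scrE$ only directly controls $e(u){-}\pi$ in $H^1$ and not $e(u)$ and $\pi$ separately — this requires the duality argument pairing the stress with the safe-load stress $\sigma_{\rm SL}$ and exploiting $|{\rm dev}\,\sigma_{\rm SL}|\le\sY(0)-\alpha$ together with $\sY$ nondecreasing so that $\sY(\zeta_\tau^{k-1})\ge\sY(0)$; and (ii) the $L^2(Q)$-estimate \eqref{est-Delta-zeta+}, which hinges on the higher integrability $e_{{\rm el},\tau}^k\in L^4$ coming solely from the $\mathbb H$-regularization (as emphasized in Remark~\ref{rem-simple}) and on a careful sign bookkeeping for the normal-cone term. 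Everything else — convexity, lower semicontinuity, and the telescoping of energies — is routine once the splitting is set up.
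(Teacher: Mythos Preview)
Your proposal is correct and follows essentially the same route as the paper: existence of each incremental step via the direct method applied to the two convex minimization problems \eqref{minimization-1}--\eqref{minimization-2} (with coercivity supplied by the $\mathbb H$-term and the safe-load condition), then the discrete energy inequality obtained by testing with $(u_\tau^k{-}u_\tau^{k-1},\pi_\tau^k{-}\pi_\tau^{k-1},\zeta_\tau^k{-}\zeta_\tau^{k-1})$ and telescoping, and finally \eqref{est-Delta-zeta+} by testing \eqref{plast-dam13-disc} with $-\mathrm{div}((1{+}\eps|\nabla\zeta_\tau^k|^{r-2})\nabla\zeta_\tau^k)$ using the sign $\int_\Omega N_{[0,1]}(\zeta_\tau^k)\,(-\mathrm{div}(\ldots))\ge0$. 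The paper makes the latter sign argument explicit via the formal identity $\nabla(\partial\Indic_{[0,1]}(\zeta_\tau^k))=\partial^2\Indic_{[0,1]}(\zeta_\tau^k)\nabla\zeta_\tau^k$ and refers to a mollification to make it rigorous, which is exactly the ``careful sign bookkeeping'' you anticipate; your identification of the two genuine obstacles (the $\mathrm{BD}/\mathrm{Meas}$ bounds via the safe-load duality, and the $L^4$-integrability of $e_{{\rm el},\tau}^k$ needed for the $L^2$-control of the damage driving force) matches the paper's reasoning.
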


\begin{proof}
The existence of weak solutions to \eqref{plast-dam-disc} can be justified by
the direct method when realizing the variational structure:
the boundary-value problem (\ref{plast-dam-disc}a,b)--(\ref{BC-disc}a,b) represents 
a minimization problem 
\begin{subequations}\label{minimization}
\begin{align}\label{minimization-1}
\left\{\begin{array}{ll}\text{Minimize}&
(u,\pi)\mapsto\scrE(k\tau,u,\pi,\zeta_\tau^{k-1})+\scrR(\zeta_\tau^{k-1};\pi{-}\pi_\tau^{k-1},0)
\\[.3em]\text{subject to}&u\in\mathrm{BD}(\barOmega;\R^d),\ \ \pi\in
\mathrm{Meas}(\barOmega;\R^{d\ti d}_{\mathrm{dev}}),\
\\[.2em]&\,e(u){-}\pi\!\in\!H^1(\Omega;\R^{d\ti d}_{\mathrm{sym}}),\ \ 
u\ootimes\vec{n}\d S\!+\pi=0\text{ on }\GD,\end{array}\right.
\intertext{while the boundary-value problem \eqref{plast-dam13-disc}--\eqref{BC-disc-zeta}
represents a minimization problem}
\label{minimization-2}
\left\{\begin{array}{ll}\text{Minimize}&\displaystyle{
\zeta\mapsto\scrE(k\tau,u_\tau^k,\pi_\tau^k,\zeta)
+\tau\scrR\Big(\zeta_\tau^{k-1};0,\frac{\zeta{-}\zeta_\tau^{k-1}}\tau\Big)}
\\\text{subject to}&\zeta\in W^{1,r}(\Omega),\ \ 0\le\zeta\le1\ \text{ on }\Omega,
\end{array}\hspace{1.2em}\right.
\end{align}
\end{subequations}
whose solutions do exist by coercivity, convexity, and lower semicontinuity arguments.
Here the safe-load qualification \eqref{ass-load} of $f$ and $g$ is to be used.

Further, we test \eqref{plast-dam-disc} respectively by $u_\tau^k{-}u_\tau^{k-1}$,
$\pi_\tau^k{-}\pi_\tau^{k-1}$, and $\zeta_\tau^k{-}\zeta_\tau^{k-1}$.
Relying on the convexity of $\scrE(k\tau,\cdot,\cdot,\zeta_\tau^{k-1})$
and of $\scrE(k\tau,u_\tau^k,\pi_\tau^k,\cdot)$, we obtain the estimates
\begin{subequations}
\begin{align}
&\scrE(k\tau,u_\tau^k,\pi_\tau^k,\zeta_\tau^{k-1})
+\int_{\barOmega}
\sY(\zeta_\tau^{k-1})|\pi_\tau^k{-}\pi_\tau^{k-1}|(\d x)\le
\scrE(k\tau,u_\tau^{k-1},\pi_\tau^{k-1},\zeta_\tau^{k-1}),\!\!\!
\label{est1}\\&\scrE(k\tau,u_\tau^k,\pi_\tau^k,\zeta_\tau^k)
+\int_\Omega\widehat a(\zeta_\tau^k{-}\zeta_\tau^{k-1})\dd x
\le
\scrE(k\tau,u_\tau^k,\pi_\tau^k,\zeta_\tau^{k-1})
\label{est2}\end{align}
\end{subequations}
with $\widehat a$ from \eqref{engr-bal}.
By summing these estimates, we can enjoy the cancellation of the terms 
$\scrE(k\tau,u_\tau^k,\pi_\tau^k,\zeta_\tau^{k-1})$ in \eqref{est1}
and \eqref{est2}, and we thus obtain
\begin{align}\label{enegr-est}
&\scrE(k\tau,u_\tau^k,\pi_\tau^k,\zeta_\tau^k)
+\widehat\scrR(\zeta_\tau^{k-1};\pi_\tau^k{-}\pi_\tau^{k-1},\zeta_\tau^k{-}\zeta_\tau^{k-1})
\le\scrE(k\tau,u_\tau^{k-1},\pi_\tau^{k-1},\zeta_\tau^{k-1})
\\&\nonumber\qquad\ \
=\scrE((k{-}1)\tau,u_\tau^{k-1},\pi_\tau^{k-1},\zeta_\tau^{k-1})
+\int_{(k-1)\tau}^{k\tau}\!\!\partial_t\scrE(t,u_\tau^{k-1},\pi_\tau^{k-1},\zeta_\tau^{k-1})\dd t
\end{align}
with the dissipation rate $\widehat\scrR$ defined as
\begin{align}\label{dis-rate}
\widehat\scrR(\zeta;\DT\pi,\DT\zeta):=
\int_{\barOmega}
\sY(\zeta)|\DT\pi|(\d x)
+\int_\Omega\widehat a(\DT\zeta)\dd x\qquad\text{ with }\ 
\widehat a(\DT\zeta)=\DT\zeta\partial a(\DT\zeta).
\end{align}
By summing \eqref{enegr-est} over $k$ we enjoy a ``telescopic'' cancellation
effect. Realizing \eqref{DTE} and \eqref{ass-load},
by the discrete Gronwall inequality, we obtain (\ref{est}a--d).

Having estimated $\partial a(\DT\zeta_\tau)
+\frac12\mathbb C'(\barzeta)\bare_{\mathrm{el},\tau}:\bare_{\mathrm{el},\tau}
-b'(\barzeta_\tau)$ as a bounded set in $L^2(Q)$ uniformly with respect to 
$\tau>0$, we can estimate also $
\mathrm{div}((1{+}\eps|\nabla\zeta_\tau^k|^{r-2})\nabla\zeta_\tau^k)
$ in the same space. 
For this, we test \eqref{plast-dam13-disc} by
$-\mathrm{div}((1{+}\eps|\nabla\zeta_\tau^k|^{r-2})\nabla\zeta_\tau^k)$. 
Here, the 
important ingredient is, written rather formally, the following estimate
\begin{align*}
&\int_\Omega\!N_{[0,1]}^{}(\zeta_\tau^k)\big(
{-}\mathrm{div}((1{+}\eps|\nabla\zeta_\tau^k|^{r-2})\nabla\zeta_\tau^k)\big)
\dd x
\\[-.4em]&\hspace{6em}
=-\int_\Omega \partial\Indic_{[0,1]}^{}(\zeta_\tau^k)\big(
\mathrm{div}((1{+}\eps|\nabla\zeta_\tau^k|^{r-2})\nabla\zeta_\tau^k)\big)\dd x
\\\nonumber&\hspace{6em}
=\int_\Omega\nabla\big(\partial\Indic_{[0,1]}^{}(\zeta_\tau^k)\big)
{\cdot}(1{+}\eps|\nabla\zeta_\tau^k|^{r-2})\nabla\zeta_\tau^k\dd x
\\&\hspace{6em}
=\int_\Omega \partial^2\Indic_{[0,1]}^{}(\zeta_\tau^k)\cdot\nabla\zeta_\tau^k{\cdot}
(1{+}\eps|\nabla\zeta_\tau^k|^{r-2})\nabla\zeta_\tau^k\dd x\ge0
\nonumber
\end{align*}
which is due to the positive-semidefiniteness of the (generalized) Jacobian
$\partial^2\Indic_{[0,1]}^{}$ of the convex function $\Indic_{[0,1]}^{}$
and which is to be proved rigorously by a mollification of $\Indic_{[0,1]}^{}$, 
cf.\ \cite[Lemma~1]{RouSte??MSMA} for technical details. Thus we obtain 
\eqref{est-Delta-zeta+}.
\end{proof}

\medskip

\begin{lemma}[Discrete analog of \eqref{def-of-weak-sln}]\label{lem-disc-2}
With the notation \eqref{def-of-interpolants} and 
$\bare_{\mathrm{el},\tau}=e(\baru_\tau{+}\baruDtau)-\barpi_\tau$, the discrete 
solution obtained by the recursive scheme 
\eqref{plast-dam-disc}--\eqref{BC-disc} satisfies:
\begin{subequations}\begin{align}
\label{semi-stab-disc}
&\scrE(t,\baru_\tau(t),\barpi_\tau(t),\underline\zeta_\tau(t))\le
\scrE(t,\wt u,\wt\pi,\underline\zeta_\tau(t))
+\scrR(\underline\zeta_\tau(t);\wt\pi-\barpi_\tau(t),0)
\intertext{for all $t\in[0,T]$ and all $(\wt u,\wt\pi)$ as in 
\eqref{semi-stab}, and}
\label{flow-rule-disc}
&\int_Qa(v)+\Big(\frac12\mathbb C'(\underline\zeta_\tau)
\bare_\mathrm{el,\tau}:\bare_\mathrm{el,\tau}
-\kappa\,
\mathrm{div}\big((1{+}\eps|\nabla\barzeta_\tau|^{r-2})\nabla\barzeta_\tau\big)
\\[-.6em]\nonumber
&\hspace{6em}
-b'(\barzeta_\tau)+\barxi_\tau\Big)(v-\DT\zeta_\tau)
\dd x\dd t
\ge\int_Qa(\DT\zeta_\tau)\dd x\dd t
\intertext{holds for all $v\in L^2(Q)$ and for some $\barxi_\tau\in L^2(Q)$ 
such that $\barxi_\tau\in N_{[0,1]}(\barzeta_\tau)$ a.e.\ on $Q$, and 
eventually the energy (im)balance holds:}
\label{engr-bal-disc}
&\scrE(T,u_\tau(T),\pi_\tau(T),\zeta_\tau(T))
+\int_0^T\!\widehat\scrR(\underline\zeta_\tau;\DT\pi_\tau,\DT\zeta_\tau)\dd t
\\[-.5em]&\hspace{6em}
\nonumber
\le
\scrE(0,u_0,\pi_0,\zeta_0)+\int_0^T\!\!
\partial_t\scrE(t,\underline u_\tau(t),\underline\pi_\tau(t),\underline\zeta_\tau(t))\dd t
\end{align}\end{subequations}
with the overall dissipation rate $\widehat\scrR$ from \eqref{dis-rate}.
Moreover, the a-priori estimate holds:
\begin{align}\label{est-of-xi}
\big\|\barxi_\tau\big\|_{L^2(Q)}\le C.
\end{align}
\end{lemma}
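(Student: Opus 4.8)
The plan is to derive all four assertions directly from the variational structure of the incremental scheme — the convex minimization problems \eqref{minimization-1} and \eqref{minimization-2} — together with the a-priori bounds already secured in Lemma~\ref{lem-disc}; here $\scrE(t,\cdot)$ in \eqref{semi-stab-disc} is to be read with the loads frozen to the $k$-th time level for $t\in((k{-}1)\tau,k\tau]$, so that it coincides with $\scrE(k\tau,\cdot)$ on that interval and the interpolants $\baru_\tau,\barpi_\tau,\underline\zeta_\tau$ are constant there (in \eqref{engr-bal-disc}, by contrast, $\scrE$ keeps its genuine time-dependence). For the semi-stability I would fix $t\in((k{-}1)\tau,k\tau]$ and use that $(u_\tau^k,\pi_\tau^k)$ minimizes \eqref{minimization-1}: testing minimality against an arbitrary admissible competitor $(\wt u,\wt\pi)$, invoking the sub-additivity of $\wt\pi\mapsto\scrR(\zeta_\tau^{k-1};\wt\pi,0)=\int_{\barOmega}\sY(\zeta_\tau^{k-1})|\wt\pi|(\d x)$ in the form $\scrR(\zeta_\tau^{k-1};\wt\pi{-}\pi_\tau^{k-1},0)\le\scrR(\zeta_\tau^{k-1};\wt\pi{-}\pi_\tau^k,0)+\scrR(\zeta_\tau^{k-1};\pi_\tau^k{-}\pi_\tau^{k-1},0)$, and cancelling the common term gives \eqref{semi-stab-disc} (for $t=0$ it is the hypothesis \eqref{ass-IC-stable}). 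This is the routine ``minimizers are semi-stable'' argument.

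For the damage flow rule, the first-order optimality condition for the convex constrained problem \eqref{minimization-2} is precisely the inclusion \eqref{plast-dam13-disc}: there are measurable selections $\theta_\tau^k\in\partial a\big((\zeta_\tau^k{-}\zeta_\tau^{k-1})/\tau\big)$ and $\xi_\tau^k\in N_{[0,1]}(\zeta_\tau^k)$ realising that identity a.e.\ in $\Omega$, with $\theta_\tau^k$ bounded in $L^2(\Omega)$ uniformly in $k,\tau$ because the two-sided quadratic bound \eqref{ass-a} forces $\partial a$ to grow at most linearly, together with the bound on $\DT\zeta_\tau$ in \eqref{est-Delta-zeta}. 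Writing $\barxi_\tau$ for the piecewise-constant time-interpolant of $\xi_\tau^k$, the pointwise convexity inequality $a(v)\ge a(\DT\zeta_\tau)+\theta_\tau(v{-}\DT\zeta_\tau)$ (valid in $Q$ for every $v\in L^2(Q)$), after substituting $\theta_\tau$ from \eqref{plast-dam13-disc} and integrating over $Q$, gives \eqref{flow-rule-disc}. The estimate \eqref{est-of-xi} then drops out of the same identity by isolating $\barxi_\tau$: $b'(\barzeta_\tau)$ is bounded in $L^\infty(Q)$ since $b'$ is continuous on $[0,1]$, $\theta_\tau$ is $L^2(Q)$-bounded as just noted, $\mathrm{div}\big((1{+}\eps|\nabla\barzeta_\tau|^{r-2})\nabla\barzeta_\tau\big)$ is $L^2(Q)$-bounded by \eqref{est-Delta-zeta+}, and the driving force $\tfrac12\mathbb C'(\barzeta_\tau)\bare_{\mathrm{el},\tau}{:}\bare_{\mathrm{el},\tau}$ is $L^2(Q)$-bounded because $\bare_{\mathrm{el},\tau}$ is bounded in $L^\infty(0,T;H^1(\Omega;\R^{d\times d}))$ by Lemma~\ref{lem-disc} and $H^1(\Omega)\hookrightarrow L^4(\Omega)$ for $d\in\{2,3\}$, whence $|\bare_{\mathrm{el},\tau}|^2$ is bounded in $L^\infty(0,T;L^2(\Omega))$.

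Finally, the energy imbalance \eqref{engr-bal-disc} I would obtain by summing the one-step estimate \eqref{enegr-est} over $k=1,\dots,T/\tau$: the $\scrE$-values at the interior nodes telescope, the terminal one being $\scrE(T,u_\tau(T),\pi_\tau(T),\zeta_\tau(T))$; the sum of the $\partial_t\scrE$-integrals over the subintervals collapses to $\int_0^T\partial_t\scrE(t,\underline u_\tau,\underline\pi_\tau,\underline\zeta_\tau)\dd t$; and $\sum_k\widehat\scrR(\zeta_\tau^{k-1};\pi_\tau^k{-}\pi_\tau^{k-1},\zeta_\tau^k{-}\zeta_\tau^{k-1})=\int_0^T\widehat\scrR(\underline\zeta_\tau;\DT\pi_\tau,\DT\zeta_\tau)\dd t$, an exact identity because $\wt\pi\mapsto\int_{\barOmega}\sY(\cdot)|\wt\pi|(\d x)$ is $1$-homogeneous and $\langle\partial a(\DT\zeta_\tau^k),\zeta_\tau^k{-}\zeta_\tau^{k-1}\rangle=\tau\int_\Omega\widehat a\big((\zeta_\tau^k{-}\zeta_\tau^{k-1})/\tau\big)\dd x$ by the definition of $\widehat a$. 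Once Lemma~\ref{lem-disc} is available this is essentially bookkeeping; the one genuinely delicate point is the $L^2(Q)$-bound on the quadratic damage-driving force $\tfrac12\mathbb C'(\barzeta_\tau)\bare_{\mathrm{el},\tau}{:}\bare_{\mathrm{el},\tau}$, which is exactly where the $\mathbb H$-regularization enters indispensably — via the uniform $H^1$-bound on $\bare_{\mathrm{el},\tau}$ and the Sobolev embedding $H^1\hookrightarrow L^4$ valid for $d\in\{2,3\}$ — the same mechanism already underlying \eqref{est-Delta-zeta+}.
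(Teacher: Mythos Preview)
Your argument is correct and follows the same route as the paper: semi-stability from minimality of \eqref{minimization-1} plus the triangle inequality for the $1$-homogeneous dissipation, the damage variational inequality as the Euler--Lagrange inclusion for \eqref{minimization-2} summed in time, the energy imbalance by telescoping \eqref{enegr-est}, and the $L^2$-bound on $\barxi_\tau$ by comparison in \eqref{plast-dam13-disc} using the already available estimates. Your added detail on the $H^1\hookrightarrow L^4$ embedding (for $d\le3$) to control $\tfrac12\mathbb C'(\cdot)\bare_{\mathrm{el},\tau}{:}\bare_{\mathrm{el},\tau}$ in $L^2(Q)$ makes explicit the mechanism the paper only alludes to via ``the already obtained estimates''.
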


\begin{proof}
The boundary-value problem (\ref{plast-dam-disc}a,b)--(\ref{BC-disc}a,b) represents 
a minimization problem \eqref{minimization-1}
which can be tested by $(u_\tau^{k-1},\pi_\tau^{k-1})$ and, by using a triangle inequality 
facilitated by the 1-homogeneity of $\scrR(\zeta;\cdot,\DT\zeta)$, we obtain 
\eqref{semi-stab-disc}; actually, this is a standard argument in the theory of 
rate-independent processes \cite{Miel05ERIS,MieRou15RIPT,MieThe04RIHM}.

In the case of the boundary-value problem \eqref{plast-dam13-disc}--\eqref{BC-disc-zeta},
the variational inequality \eqref{flow-rule-disc} represents just 
the conventional weak formulation 
of the minimization problem \eqref{minimization-2} summed for all time levels.
Then, \eqref{engr-bal-disc} follows by summing \eqref{enegr-est} for 
$k=1,...,T/\tau$. 

Eventually, the estimate \eqref{est-of-xi} follows by comparison from the inclusion 
$\barxi_\tau\in b'(\barzeta_\tau)
-\frac12\mathbb C'(\underline\zeta_\tau)\bare_\mathrm{el,\tau}:\bare_\mathrm{el,\tau}
+\kappa\,
\mathrm{div}((1{+}\eps|\nabla\barzeta_\tau|^{r-2})\nabla\barzeta_\tau)
-\partial a(\DT\zeta_\tau)$ and by the already obtained estimates.
\end{proof}

\medskip

\begin{proposition}[Convergence]\label{prop-conv}
Let the assumptions \eqref{ass} be satisfied and the approximate solution 
$(\baru_{\tau},\barpi_{\tau},\barzeta_\tau,\barxi_\tau)$ be
obtained by the recursive scheme \eqref{plast-dam-disc}--\eqref{BC-disc}.
Then there is a subsequence and $(u,\pi,\zeta,\xi)$ such that
\begin{subequations}\label{conv}
\begin{align}\label{conv-u}  
&\baru_{\tau}(t)\to u(t)&&\text{weakly* in }
\mathrm{BD}(\barOmega;\R^d),
\\\label{conv-pi}&\barpi_{\tau}(t)\to\pi(t)&&\text{weakly* in }
\mathrm{Meas}(\barOmega;\R^{d\times d}_\mathrm{dev}),
\\\label{conv-eel} 
&\bare_\mathrm{el,\tau}(t)=e(\baru_{\tau}(t)){-}\barpi_{\tau}(t)
\to e(u(t)){-}\pi(t)=e_\mathrm{el}(t)\hspace*{-5.5em}&&\hspace*{5em}
\text{weakly
in }H^1(\Omega;\R^{d\times d}_\mathrm{sym}),\!\!
\\\label{conv-zeta}
&\barzeta_\tau(t)\to\zeta(t)\ \ \text{ and }\ \ 
\underline\zeta_\tau(t)\to\zeta(t)&&\text{weakly in }W^{1,r}(\Omega)
\intertext{holding for  any $t\!\in\![0,T]$, and further also}
\label{conv-zeta-Loo}&
\underline\zeta_\tau\to\zeta&&\text{strongly in }L^\infty(Q),\text{ and}
\\\label{conv-xi}
&\barxi_\tau\to\xi&&\text{weakly in }L^2(Q)
\end{align}\end{subequations}
with $\barxi_\tau$ from Lemma~\ref{lem-disc-2}. Moreover, any $(u,\pi,\zeta)$
obtained by such a way is a weak solution according Definition~\ref{weak-sln}
with $\xi$ in \eqref{flow-rule-weak} taken from \eqref{conv-xi}.
\end{proposition}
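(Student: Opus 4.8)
The plan is to run the by-now standard machinery for rate-independent systems coupled with a rate-dependent, gradient-regularised flow: a generalised Helly selection in time, weak/strong compactness, limit passage in the semi-stability and in the damage flow rule, and a final bootstrap of the convergences by the limit energy balance. From the a-priori estimates \eqref{est} and \eqref{est-of-xi} of Lemmas~\ref{lem-disc}--\ref{lem-disc-2}, $\barpi_\tau$ is bounded in $L^\infty(0,T;\mathrm{Meas}(\barOmega;\R^{d\times d}_{\mathrm{dev}}))\cap\mathrm{BV}([0,T];L^1(\Omega;\R^{d\times d}_{\mathrm{dev}}))$, so a generalised Helly principle (cf.\ \cite{Miel05ERIS,MieRou15RIPT}) gives a subsequence with $\barpi_\tau(t)\weaks\pi(t)$ in $\mathrm{Meas}$ for \emph{every} $t\in[0,T]$. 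The bound on $\zeta_\tau$ in $L^\infty(0,T;W^{1,r}(\Omega))\cap H^1(0,T;L^2(\Omega))$, together with $r>d$ (hence $W^{1,r}(\Omega)\hookrightarrow\hookrightarrow C(\barOmega)$) and a Gagliardo--Nirenberg estimate giving uniform H\"older continuity in time into $C(\barOmega)$, yields, by Arzel\`{a}--Ascoli/Aubin--Lions, $\zeta_\tau\to\zeta$ strongly in $C([0,T]{\times}\barOmega)$, whence $\barzeta_\tau,\underline\zeta_\tau\to\zeta$ strongly in $L^\infty(Q)$ and, for every $t$, weakly in $W^{1,r}(\Omega)$, while $\DT\zeta_\tau\weak\DT\zeta$ in $L^2(Q)$. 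For each fixed $t$, the pair $(\baru_\tau(t),\bare_{\mathrm{el},\tau}(t))$ solves the discrete momentum problem \eqref{eq6:plast-dam1-disc}--\eqref{BC-disc} with data $(\barpi_\tau(t),\underline\zeta_\tau(t))$; since $g(k\tau)\to g(t)$, $\frM(k\tau)\to\frM(t)$ and $\uD(k\tau)\to\uD(t)$ by the $W^{1,1}$-in-time regularity in \eqref{ass-load}, every subsequential limit solves the limit momentum problem, which is uniquely solvable by strict convexity of $\scrE(t,\cdot,\pi(t),\zeta(t))$ (via the $\mathbb H$-term and Korn's inequality in $\mathrm{BD}$); hence the whole sequence converges, proving \eqref{conv-u}--\eqref{conv-zeta-Loo}, and \eqref{est-of-xi}, \eqref{est-Delta-zeta+} give the weak $L^2(Q)$-limits $\barxi_\tau\weak\xi$ of \eqref{conv-xi} and $\mathrm{div}((1{+}\eps|\nabla\barzeta_\tau|^{r-2})\nabla\barzeta_\tau)\weak\sigma$.

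\textbf{Semi-stability.} To pass from \eqref{semi-stab-disc} to \eqref{semi-stab}, for a fixed admissible test pair $(\wt u,\wt\pi)$ I would use it directly as a ($\tau$-independent) mutual recovery sequence, which is legitimate because, after the shift \eqref{subst-zero-Dirichlet}, the constraint set and the Dirichlet datum are time-constant. On the right-hand side $\scrE(t,\wt u,\wt\pi,\underline\zeta_\tau(t))+\int_{\barOmega}\sY(\underline\zeta_\tau(t))|\wt\pi{-}\barpi_\tau(t)|$ converges to the corresponding limit value by $\underline\zeta_\tau(t)\to\zeta(t)$ in $C(\barOmega)$, continuity of $\mathbb C$ and $\sY$, and $\barpi_\tau(t)\weaks\pi(t)$; on the left-hand side, rewriting $\scrE(t,\baru_\tau(t),\barpi_\tau(t),\underline\zeta_\tau(t))$ as in \eqref{semi-stab+}, the $\mathbb H$-term is weakly lower semicontinuous (as $\nabla\bare_{\mathrm{el},\tau}(t)\weak\nabla e_\mathrm{el}(t)$ in $L^2$) while the $\mathbb C$-term actually converges, because $H^1(\Omega)\hookrightarrow\hookrightarrow L^4(\Omega)$ for $d\le3$, so $\bare_{\mathrm{el},\tau}(t)\otimes\bare_{\mathrm{el},\tau}(t)\to e_\mathrm{el}(t)\otimes e_\mathrm{el}(t)$ strongly in $L^2(\Omega)$ tested against the strongly convergent $\mathbb C(\underline\zeta_\tau(t))$; the binomial cancellation of the cross terms in \eqref{semi-stab+} then yields \eqref{semi-stab} for all $t$.

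\textbf{Energy balance and strong convergences.} In \eqref{engr-bal-disc}, the left-hand side is bounded below by weak lower semicontinuity of $\scrE(T,\cdot)$ and of the dissipation (for the $\widehat a$-term using the coercivity in \eqref{ass-a}); on the right-hand side $\scrE(0,u_0,\pi_0,\zeta_0)$ is fixed and $\int_0^T\partial_t\scrE(t,\underline u_\tau,\underline\pi_\tau,\underline\zeta_\tau)\,\dd t\to\int_0^T\partial_t\scrE(t,u,\pi,\zeta)\,\dd t$ by \eqref{DTE}, the convergences above, and the safe-load condition in \eqref{ass-load}, which lets one rewrite the loading terms $-\DT g{\cdot}u$, $-\DT\frM{\cdot}u$ in a form paired against the measure $e(u)$, as in \cite{DaDeMo06QEPL}. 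This gives ``$\le$'' in \eqref{engr-bal}; the reverse inequality follows from the limit semi-stability \eqref{semi-stab} by the classical Riemann-sum argument of rate-independent processes combined with the (lower-semicontinuous limit of the) damage flow rule. Hence \eqref{engr-bal} holds with equality, which forces each of the semicontinuity inequalities above to be an equality: $\bare_{\mathrm{el},\tau}\to e_\mathrm{el}$ strongly in $L^2(0,T;H^1(\Omega;\R^{d\times d}))$, $\nabla\barzeta_\tau\to\nabla\zeta$ strongly in $L^r(Q)$ (whence, by strict monotonicity of the $r$-Laplacian, $\sigma=\mathrm{div}((1{+}\eps|\nabla\zeta|^{r-2})\nabla\zeta)$, i.e.\ \eqref{H2-zeta}), and $\int_Qa(\DT\zeta_\tau)\,\dd x\,\dd t\to\int_Qa(\DT\zeta)\,\dd x\,\dd t$, hence $\DT\zeta_\tau\to\DT\zeta$ strongly in $L^2(Q)$ by the uniform convexity implied by $a(z)\ge\epsilon|z|^2$.

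\textbf{Limit of the flow rule and conclusion.} With $\DT\zeta_\tau\to\DT\zeta$ strongly in $L^2(Q)$, the only product of two merely weakly convergent factors, $\int_Q\barxi_\tau\DT\zeta_\tau$, passes to $\int_Q\xi\DT\zeta$; the remaining coefficient $\frac12\mathbb C'(\underline\zeta_\tau)\bare_{\mathrm{el},\tau}{:}\bare_{\mathrm{el},\tau}-\kappa\,\mathrm{div}((1{+}\eps|\nabla\barzeta_\tau|^{r-2})\nabla\barzeta_\tau)-b'(\barzeta_\tau)$ converges in $L^2(Q)$ (strongly, resp.\ weakly for the divergence term) against the weakly convergent $v-\DT\zeta_\tau$, and $\liminf\int_Qa(\DT\zeta_\tau)\ge\int_Qa(\DT\zeta)$; thus \eqref{flow-rule-disc} passes to \eqref{flow-rule-weak}, and $\xi\in N_{[0,1]}(\zeta)$ a.e.\ on $Q$ follows from $\barxi_\tau\in N_{[0,1]}(\barzeta_\tau)$ by strong--weak closedness of the maximal-monotone graph of $N_{[0,1]}$. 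The initial conditions \eqref{plast-dam-IC} are inherited from the pointwise-in-time convergence at $t=0$, so $(u,\pi,\zeta,\xi)$ is a weak solution in the sense of Definition~\ref{weak-sln}. The hard part will be the energy-balance step: one has to first secure the \emph{full} equality in \eqref{engr-bal} at the limit --- which needs the safe-load condition to control the $\mathrm{BD}$/measure-valued loading terms in $\partial_t\scrE$ and the rate-independent comparison for the ``$\ge$'' direction --- and then exploit it to upgrade $\DT\zeta_\tau\weak\DT\zeta$ to a strong convergence, without which the product $\int_Q\barxi_\tau\DT\zeta_\tau$ of two weakly convergent sequences could not be passed in the damage flow rule; by comparison, the plastic semi-stability and the monotone-operator identification of the $r$-Laplacian are routine.
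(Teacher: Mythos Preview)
Your argument has two genuine gaps, both in the order in which you try to close the loop.

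\textbf{Semi-stability.} Using the fixed test pair $(\wt u,\wt\pi)$ fails at the dissipation term. Since $\barpi_\tau(t)\weaks\pi(t)$ only weak* in $\mathrm{Meas}(\barOmega;\R^{d\times d}_\mathrm{dev})$, the total variation is only \emph{lower} semicontinuous: $\liminf_\tau\int_{\barOmega}\sY(\underline\zeta_\tau(t))|\wt\pi{-}\barpi_\tau(t)|(\d x)\ge\int_{\barOmega}\sY(\zeta(t))|\wt\pi{-}\pi(t)|(\d x)$, which is the wrong direction --- you need the $\limsup$ of the right-hand side of \eqref{semi-stab-disc} to be bounded \emph{above} by the limit right-hand side. (Think of $\barpi_\tau(t)$ a Dirac mass at $x_\tau\to x_0$ and $\wt\pi$ a Dirac at $x_0$: the discrete $\scrR$-term stays bounded away from $0$ while the limit is $0$.) The paper's shifted recovery sequence $\wt u_\tau:=\baru_\tau(t)+\wt u-u(t)$, $\wt\pi_\tau:=\barpi_\tau(t)+\wt\pi-\pi(t)$ is designed exactly to kill this: it makes $\wt\pi_\tau-\barpi_\tau(t)=\wt\pi-\pi(t)$ independent of $\tau$, so $\scrR(\underline\zeta_\tau(t);\wt\pi_\tau{-}\barpi_\tau(t),0)\to\scrR(\zeta(t);\wt\pi{-}\pi(t),0)$ trivially. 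The binomial identity is then applied to the $\scrE$-difference, and it works because the second factor $\nabla\big(e(\wt u_\tau{-}\baru_\tau(t))-(\wt\pi_\tau{-}\barpi_\tau(t))\big)$ is likewise $\tau$-independent, so in the $\mathbb H$-term one factor is fixed and the other weakly convergent; with a constant test pair both factors are merely weakly convergent and no cancellation occurs.

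\textbf{Energy bootstrap.} The route ``energy equality $\Rightarrow$ strong convergences $\Rightarrow$ damage flow rule'' is circular. The reverse inequality in \eqref{engr-bal} comes from testing the \emph{limit} damage flow rule \eqref{plast-dam13} by $\DT\zeta$ (the paper's Step~5) together with the Riemann-sum argument for the plastic part; but you have not established that flow rule yet, and a ``lower-semicontinuous limit'' of \eqref{flow-rule-disc} does not supply it because the coupling term $\tfrac12\mathbb C'(\underline\zeta_\tau)\bare_{\mathrm{el},\tau}{:}\bare_{\mathrm{el},\tau}$ multiplied by $\DT\zeta_\tau$ cannot be identified without strong convergence of $\bare_{\mathrm{el},\tau}$. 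Even granting \eqref{engr-bal}, it would yield strong convergence of $\nabla e_{\mathrm{el},\tau}$ and $\nabla\zeta_\tau$ only at the terminal time $T$ (these quantities enter $\scrE(T,\cdot)$ pointwise, not as time-integrals), not on all of $Q$ as you claim. The paper breaks the circle differently: it first obtains the pointwise-in-$t$ strong convergence $\bare_{\mathrm{el},\tau}(t)\to e_\mathrm{el}(t)$ from the \emph{uniqueness of the elastic stress} in perfect plasticity via the Gronwall-type estimate \eqref{uniqueness-of-stress}, and it handles the two delicate products $\int_Q\mathrm{div}((1{+}\eps|\nabla\barzeta_\tau|^{r-2})\nabla\barzeta_\tau)\DT\zeta_\tau$ and $\int_Q\barxi_\tau(-\DT\zeta_\tau)$ in \eqref{flow-rule-disc} through the $\limsup$-inequalities \eqref{limsup}--\eqref{limit-in-xi-dot-zeta} (integrate by parts in space, then use lower semicontinuity of the gradient energy at $T$). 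The identification of the weak $L^2(Q)$-limit of $\mathrm{div}((1{+}\eps|\nabla\barzeta_\tau|^{r-2})\nabla\barzeta_\tau)$ is done already at the selection stage by maximal monotonicity combined with the strong convergence $\barzeta_\tau\to\zeta$ in $L^2(Q)$; no strong $L^r(Q)$-convergence of $\nabla\barzeta_\tau$ is ever needed. Energy equality is the \emph{last} step, proved after all flow rules, not a tool for upgrading convergences.
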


\medskip

\begin{proof}
For clarity of exposition, we divide the proof into five particular steps.

\medskip

\noindent \emph{Step 1: Selection of a converging subsequence.} 
By Banach's selection principle, we select a weakly* converging subsequence 
with respect to the norms from the estimates \eqref{est} and \eqref{est-of-xi};
namely, for some $u$, $\pi$, $\zeta$, and $\xi$ we have
\begin{subequations}\label{conv+}
\begin{align}\label{conv-u+}  
&\baru_{\tau}\to u&&\!\!\text{weakly* in }L^\infty(0,T;\mathrm{BD}(\barOmega;\R^d)),
\\&\barpi_\tau\to\pi&&\!\!\text{weakly* in }L^\infty(0,T;\mathrm{Meas}(\barOmega;\R^{d\ti d}_{\mathrm{dev}}))\,\cap\,
\mathrm{BV}([0,T];L^1(\Omega;\R^{d\ti d}_{\mathrm{dev}})),
\\&\bare_{\mathrm{el},\tau}=e(\baru_{\tau}){-}\barpi_\tau\to 
e_{\mathrm{el}}=e(u){-}\pi
\hspace*{-18.9em}&&\hspace*{12em}\text{weakly* in }L^\infty(0,T;H^1(\Omega;\R^{d\times d}_\mathrm{sym})),\!\!
\\&\zeta_\tau\to\zeta&&\!\!\text{weakly* in }L^\infty(0,T;W^{1,r}(\Omega))\,\cap\,H^1(0,T;L^2(\Omega)),
\\\label{conv-r-Laplace}
&\mathrm{div}((1{+}\eps|\nabla\barzeta_\tau|^{r-2})\nabla\barzeta_\tau)
\to\mathrm{div}((1{+}\eps|\nabla\zeta|^{r-2})\nabla\zeta)\hspace*{-18em}&&\hspace*{18em}\text{weakly in }L^2(Q),
\\\label{conv-xi+}
&\barxi_\tau\to\xi&&\!\!\text{weakly in }L^2(Q);
\end{align}\end{subequations}
actually, \eqref{conv-r-Laplace} uses also the maximal
monotonicity of the involved nonlinear operator.
Moreover, by the BV-estimates and the Helly's selection principle, we can
also count with \eqref{conv-pi} and 
$\barzeta_\tau(t)\to\zeta(t)$ weakly in $L^2(\Omega)$,
and then by the a-priori $W^{1,r}$-estimate \eqref{est-Delta-zeta} 
also both the first and the second convergence in \eqref{conv-zeta};
both limits in \eqref{conv-zeta} are actually the same because 
the limit $\zeta$ is continuous in time into $L^2(\Omega)$ due to the
the a-priori $H^1$-estimate \eqref{est-Delta-zeta}.

By the compact embedding $W^{1,r}(\Omega)\Subset C(\barOmega)$ and 
by the Arzel\`a-Ascoli modification of the Aubin-Lions
theorem, cf.\ \cite[Lemma~7.10]{Roub13NPDE}, we have the 
compact embedding $C_\text{weak}([0,T];W^{1,r}(\Omega))\cap H^1(0,T;L^2(\Omega))
\Subset C([0,T];C(\barOmega))= C(\barQ)$.
Thus, from the estimate \eqref{est-Delta-zeta}, we obtain $\zeta_\tau\to\zeta$
in $C(\bar Q)$.
Further, we have
\begin{align}\label{est-of-diff-zeta}
&\big\|\underline\zeta_\tau-\zeta_\tau\big\|_{L^\infty(0,T;L^2(\Omega))}^2
=\sup_{0\le t\le T}\int_\Omega
\big|\underline\zeta_\tau(t,x)-\zeta_\tau(t,x)\big|\,\d x
\\&\nonumber
\qquad\qquad\qquad\le\int_\Omega\Big(\sup_{0\le t\le T}|\underline\zeta_\tau(t,x)-\zeta_\tau(t,x)|^2\Big)\d x
\\&\nonumber
\qquad\qquad\qquad=\int_\Omega\max_{k=1,...,T/\tau}\big|\zeta_\tau^k-\zeta_\tau^{k-1}\big|^2\d x
\le\int_\Omega\sum_{i=1}^{T/\tau}\big|\zeta_\tau^k-\zeta_\tau^{k-1}\big|^2\d x
\\&\qquad\qquad\qquad=\int_\Omega
\tau\sum_{i=1}^{T/\tau}\tau\Big|\frac{\zeta_\tau^k-\zeta_\tau^{k-1}}\tau\Big|^2\d x
=\tau\int_Q\!\big|\DT\zeta_\tau\big|^2\,\d x\d t.
\nonumber\end{align} 
%
Then, using the Gagliardo-Nirenberg inequality $\|z\|_{L^\infty(\Omega)}^{}\le 
C_\eps\|z\|_{L^2(\Omega)}^\eps\|z\|_{W^{1,r}(\Omega)}^{1-\eps}$
for some small $0<\eps<1$ depending on $r>d$, we can interpolate \eqref{est-of-diff-zeta}, i.e.\ 
$\|\underline\zeta_\tau-\zeta_\tau\|_{L^\infty(0,T;L^2(\Omega))}\le 
\sqrt\tau\|\DT\zeta_\tau\|_{L^2(Q)}$, with 
$\|\underline\zeta_\tau-\zeta_\tau\|_{L^\infty(0,T;W^{1,r}(\Omega))}\le C$
to obtain $\|\underline\zeta_\tau-\zeta_\tau\|_{L^\infty(Q)}\to0$.
Thus \eqref{conv-zeta-Loo} is proved.

\medskip


\medskip

\noindent \emph{Step 2: Energy inequality.} 
The convergence \eqref{conv+} allows already for passage
in the limit in the inequality \eqref{engr-bal-disc} by 
lower semicontinuity in the left-hand side and by continuity in the
right-hand side of \eqref{engr-bal-disc}.

The limit passage in $\scrE(T,u_\tau(T),\pi_\tau(T),\zeta_\tau(T))$ is by the convexity 
of $\scrE(T,\cdot,\cdot,\zeta)$
and the compactness in $\zeta$, while for $\int_0^T
\partial_t\scrE(t,\underline u_\tau(t),\underline\pi_\tau(t),\underline\zeta_\tau(t))\dd t$
we use the continuity of $\partial_t\scrE(t,\cdot,\cdot,\cdot)$ from \eqref{DTE} and 
the Lebesgue theorem; more in detail, we use the assumptions \eqref{ass-load}
and the weak convergence \eqref{conv-eel}.

The only remaining (and nontrivial) term is the dissipation 
$\widehat{\scrR}$-term. Let us note that, as 
the discrete flow rule 
$N_{S(\underline\zeta_\tau)}^{}(\DT\pi_\tau)
\ni\mathrm{dev}(\mathbb C (\underline\zeta_\tau)\bare_{\mathrm{el},\tau}
-\mathrm{div}\,\mathfrak{h}_\tau^k)$ as well as 
 the dissipation rate 
$\sY(\underline\zeta_\tau)|\DT\pi_\tau|$ uses $\underline\zeta_\tau$
and not just $\zeta_\tau$, we needed to prove \eqref{conv-zeta-Loo} in 
Step 1.
%
Therefore,
we have at disposal the estimate 
\begin{align}
\big\|(\sY(\underline\zeta_\tau)-\sY(\zeta))|\DT\pi_\tau|
\big\|_{\mathrm{Meas}(\barQ)}\le
\ell_{\sY}^{}\big\|\underline\zeta_\tau-\zeta\big\|_{L^\infty(Q)}\big\|\DT\pi_\tau\big\|_{\mathrm{Meas}(\barQ)}
\to0
\end{align}
with 
$\ell_{\sY}^{}$ the modulus of 
Lipschitz continuity of $\sY$ on $[0,1]$, cf.\ the assumption \eqref{ass-S}.
Then, using also $\zeta_\tau\to\zeta$ in $C(\barQ)$ already proved, we obtain
\begin{align}\label{lsc-diss-rate}
&\liminf_{\tau\to0}\int_0^T\!\widehat\scrR(\underline\zeta_\tau;\DT\pi_\tau,\DT\zeta_\tau)\dd t
=\liminf_{\tau\to0}\int_\barQ \sY(\underline\zeta_\tau)\big|\DT\pi_\tau\big|(\d x\d t)
\\[-.3em]&\hspace{2em}\nonumber
=\lim_{\tau\to0}\int_\barQ\big(\sY(\underline\zeta_\tau)-\sY(\zeta)\big)
\big|\DT\pi_\tau\big|(\d x\d t)
+\liminf_{\tau\to0}\int_\barQ\!\sY(\zeta)\big|\DT\pi_\tau\big|(\d x\d t)
\\[-.3em]\nonumber
&\hspace{2em}
\ge\ \ 0\ +\int_\barQ\!\sY(\zeta)\big|\DT\pi\big|(\d x\d t);
\end{align}
for the used weak* lower semicontinuity of 
$\DT\pi\mapsto\int_\barQ\!\sY(\zeta)|\DT\pi|(\d x\d t)$ 
we refer e.g.\ to \cite{AmFuPa00FBVF,Gius03DMCV}.

\medskip

\noindent
\emph{Step 3: 
Limit passage in the semi-stability \eqref{semi-stab-disc} towards \eqref{semi-stab}.}
For any $(\wt u,\wt\pi)$ used in  \eqref{semi-stab-disc}, we have to
find at least one so-called mutual recovery sequence 
$\{(\widehat u_\tau,\widehat \pi_\tau)\}_{\tau>0}$
in the sense that
\begin{align*}\nonumber
\limsup_{\tau\to0}\scrE(t,\wt u_\tau,\wt\pi_\tau,\underline\zeta_\tau(t))
+\scrR(\underline\zeta_\tau(t);\wt\pi_\tau{-}\barpi_\tau(t),0)-
\scrE(t,\baru_\tau(t),\barpi_\tau(t),\underline\zeta_\tau(t))
\\\le
\scrE(t,\wt u,\wt\pi,\zeta(t))
+\scrR(\underline\zeta_\tau(t);\wt\pi{-}\pi(t),0)-
\scrE(t,u(t),\pi(t),\zeta(t)).
\end{align*}
We choose   
\begin{align}\label{mrs}
\wt u_\tau=\baru_\tau(t)+\wt u-u(t)\ \ \ \text{ and }\ \ \ 
\wt \pi_\tau=\bar \pi_\tau(t)+\wt \pi-\pi(t).
\end{align}
Then, by using the cancellation and the binomial formula of the type 
$a^2-b^2=(a{+}b)(a{-}b)$ here
in the form like $\mathbb C\wt e{:}\wt e-\mathbb Ce{:}e=\mathbb C(\wt e{+}e){:}(\wt e{-}e)$
and $\mathbb H\nabla\wt e{\Vdots}\nabla\wt e-\mathbb H\nabla e{\Vdots}\nabla e
=\mathbb H\nabla(\wt e{+}e){\Vdots}\nabla(\wt e{-}e)$, cf.\ \eqref{semi-stab+}, and 
by making the substitution \eqref{mrs}, we have 
\begin{align}\label{semi-stab-proof}
&\lim_{\tau\to0}\scrE(t,\wt u_\tau,\wt\pi_\tau,\underline\zeta_\tau(t))
+\scrR(\underline\zeta_\tau(t);\wt\pi_\tau{-}\barpi_\tau(t),0)-
\scrE(t,\baru_\tau(t),\barpi_\tau(t),\underline\zeta_\tau(t))
\\\nonumber&\quad=\lim_{\tau\to0}\bigg(\int_{\Omega}\,\frac12\mathbb C(\underline\zeta_\tau(t))\big(e(\wt u_\tau{+}\baru_\tau(t){+}2\uD(t)){-}\wt\pi_\tau{-}\barpi_\tau(t)\big)
\\\nonumber&\qquad\qquad\qquad\qquad\qquad\quad
:\big(e(\wt u_\tau{-}\baru_\tau(t)){-}\wt\pi_\tau{+}\barpi_\tau(t)\big)
-g(t){\cdot}(\wt u_\tau{-}\baru_\tau(t))
\\\nonumber&\qquad
+\frac12\mathbb H\nabla\big(e(\wt u_\tau{+}\baru_\tau(t){+}2\uD(t)){-}\wt\pi_\tau{-}\barpi_\tau(t)\big)
\\\nonumber&\qquad\qquad\qquad\qquad\qquad\quad
\Vdots
\nabla\big(e(\wt u_\tau{-}\baru_\tau(t)){-}\wt\pi_\tau{+}\barpi_\tau(t)\big)\dd x
\\\nonumber&\qquad
+\int_{\barOmega}\!\big[
\sY(\underline\zeta_\tau(t))\big|\wt\pi_\tau{-}\barpi_\tau(t)\big|(\d x)
-\!\int_{\GN}\!\!\frM(t){\cdot}(\wt u_\tau{-}\baru_\tau(t))\dd S\bigg)
\\\nonumber&\quad
=\lim_{\tau\to0}\bigg(\int_{\Omega}\,\frac12\mathbb C(\underline\zeta_\tau(t))
\big(e(\wt u_\tau{+}\baru_\tau(t){+}2\uD(t)){-}\wt\pi_\tau{-}\barpi_\tau(t)\big)
\\\nonumber&\qquad\qquad\qquad\qquad\qquad\quad
:\big(e(\wt u{-}u(t)){-}\wt\pi{+}\pi(t)\big)
-g(t){\cdot}(\wt u{-}u(t))
\\\nonumber&\qquad
+\frac12\mathbb H\nabla\big(e(\wt u_\tau{+}\baru_\tau(t){+}2\uD(t)){-}\wt\pi_\tau{-}\barpi_\tau(t)\big)\Vdots\nabla\big(e(\wt u{-}u(t)){-}\wt\pi{+}\pi(t)\big)\dd x
\\\nonumber&\qquad
+\int_{\barOmega}\!
\sY(\underline\zeta_\tau(t))\big|\wt\pi{-}\pi(t)\big|(\d x)\bigg)
-\!\int_{\GN}\!\!\frM(t){\cdot}(\wt u{-}u(t))\dd S
\\\nonumber&\quad
=\int_{\Omega}\,\frac12\mathbb C(\underline\zeta(t))
\big(e(\wt u{+}\baru(t){+}2\uD(t)){-}\wt\pi{-}\barpi(t)\big)
:\big(e(\wt u{-}u(t)){-}\wt\pi{+}\pi(t)\big)
\\\nonumber&\quad\quad
+\frac12\mathbb H\nabla\big(e(\wt u{+}\baru(t){+}2\uD(t)){-}\wt\pi{-}\barpi(t)\big)\Vdots\nabla\big(e(\wt u{-}u(t)){-}\wt\pi{+}\pi(t)\big)\dd x
\\\nonumber&\quad\quad
+\int_{\barOmega}\!\sY(\zeta)\big|\wt\pi{-}\pi(t)\big|
(\d x)
-\int_{\Omega}\!g(t){\cdot}(\wt u{-}u(t))\dd x
-\!\int_{\GN}\!\!\frM(t){\cdot}(\wt u{-}u(t))\dd S
\\&\quad=\scrE(t,\wt u,\wt\pi,\zeta(t))+\scrR(\zeta(t);\wt\pi{-}\pi(t),0)-
\scrE(t,u(t),\pi(t),\zeta(t)).
\nonumber\end{align}
Note that we used also $\sY(\underline\zeta_\tau(t))|\wt\pi{-}\pi(t)|
\to\sY(\zeta)|\wt\pi{-}\pi(t)|$ in $\mathrm{Meas}(\barOmega)$
due to the continuity assumption \eqref{ass-S} on $\sY$ and due to the 
convergence $\underline\zeta_\tau(t)\to\zeta(t)$ in $C(\bar\Omega)$
which follows from the second estimates in \eqref{conv-zeta} and the 
compact embedding $W^{1,r}(\Omega)\subset C(\bar\Omega)$.

\medskip

\noindent
\emph{Step 4: Limit passage in the damage flow rule \eqref{flow-rule-disc} 
towards \eqref{flow-rule-weak}.}
We need to prove that $\bare_{\mathrm{el},\tau}\to e_{\mathrm{el}}$ strongly in 
$L^2(Q;\R_{\rm sym}^{d\times d})$. To this goal, we first realize that 
$\nabla\bare_{\mathrm{el},\tau}(t)\to\nabla e_{\mathrm{el}}(t)$
weakly in $L^2(\Omega;\R^{d\times d\times d})$ as pronounced in \eqref{conv-eel};
here we use the uniqueness of stresses (counting the already selected 
subsequence \eqref{conv+} and its limit), cf.~the arguments in 
\cite[Thm.5.9]{DaDeMo06QEPL} or also in \cite[Sect.4.2.3]{Maug92TPF}
for simple materials without damage. Here, using also absolute continuity 
valid due to viscosity in damage flow rule
we obtain
\begin{align}\label{uniqueness-of-stress}
&\frac12\frac{\d}{\d t}\Big(\big\langle\mathbb{H}\nabla(e_{\rm el}^{(1)}{-}e_{\rm el}^{(2)}),\nabla(e_{\rm el}^{(1)}{-}e_{\rm el}^{(2)})\big\rangle
+\big\langle\mathbb{C}(\zeta)(e_{\rm el}^{(1)}{-}e_{\rm el}^{(2)}),e_{\rm el}^{(1)}{-}e_{\rm el}^{(2)}\big\rangle\Big)
\\&\nonumber
\qquad\qquad\qquad\qquad=-\frac12\big\langle\mathbb{C}'(\zeta)\DT\zeta(e_{\rm el}^{(1)}{-}e_{\rm el}^{(2)}),
e_{\rm el}^{(1)}{-}e_{\rm el}^{(2)}\big\rangle
\\&\nonumber
\qquad\qquad\qquad\qquad\le\max_{0\le z\le1}|\mathbb{C}'(z)|\big\|\DT\zeta\big\|_{L^2(\Omega)}
\big\|e_{\rm el}^{(1)}{-}e_{\rm el}^{(2)}\big\|_{L^4(\Omega;\R^{d\times d})}^2.
\end{align}
Note that, for $\mathbb{H}=0$ and $\mathbb{C}'=0$, it reduces to the simple 
inequality $\langle\sigma_{\rm el}^{(1)}-\sigma_{\rm el}^{(2)}),
\DT e_{\rm el}^{(1)}-\DT e_{\rm el}^{(2)}\rangle\le0$ used in 
\cite{DaDeMo06QEPL,Maug92TPF}. Here, we should integrate 
\eq{uniqueness-of-stress} over $[0,t]$, use positive-definiteness of 
$\mathbb{H}$ and $\mathbb{C}(\cdot)$, and eventually Gronwall's inequality,
%
which works here certainly even for $d\le4$ for which the embedding 
$H^2(\Omega)\subset W^{1,4}(\Omega)$ holds. By this way, we obtain
$e_{\rm el}^{(1)}=e_{\rm el}^{(2)}$.
Thus, using the compact embedding, we also know that 
$\bare_{\mathrm{el},\tau}(t)\to e_{\mathrm{el}}(t)$ strongly in 
 $L^{6-\epsilon}(\Omega;\R_{\rm sym}^{d\times d})$ if $d\le3$.
Then, by the uniform bounds in time and by Lebesgue's theorem
used e.g.\ to $t\mapsto\|\bare_{\mathrm{el},\tau}(t)
-e_{\mathrm{el}}(t)\|_{L^1(\Omega;\R_{\rm sym}^{d\times d})}$, we can see that
$\bare_{\mathrm{el},\tau}\to e_{\mathrm{el}}$ strongly even in 
$L^{1/\epsilon}(0,T;L^{6-\epsilon}(\Omega;\R_{\rm sym}^{d\times d}))$
with each small $\epsilon>0$.


Then the only difficult remaining terms are $\kappa\int_Q
\mathrm{div}((1{+}\eps|\nabla\barzeta_\tau|^{r-2})\nabla\barzeta_\tau)
\DT\zeta_\tau\dd x\d t$ and $\int_Q\barxi_\tau(-\DT\zeta_\tau)\,\d x\d t$
because so far we know only the weak convergence  of $\DT\zeta_\tau$,  of 
$\mathrm{div}((1{+}\eps|\nabla\barzeta_\tau|^{r-2})\nabla\barzeta_\tau)$, and 
of $\barxi_\tau$ in $L^2(Q)$. We indeed cannot expect the 
limit, but we can proceed the following estimate:
\begin{align}\label{limsup}
&\limsup_{\tau\to0}\int_Q
\mathrm{div}((1{+}\eps|\nabla\barzeta_\tau|^{r-2})\nabla\barzeta_\tau)\DT\zeta_\tau\dd x\d t
\\[-.3em]\nonumber&\qquad=-\liminf_{\tau\to0}\int_Q(1{+}\eps|\nabla\barzeta_\tau|^{r-2})\nabla\barzeta_\tau{\cdot}
\nabla\DT\zeta_\tau\dd x\d t
\\\nonumber&\qquad\le\limsup_{\tau\to0}\int_\Omega\frac12\big|\nabla\zeta_0|^2+\frac\eps r\big|\nabla\zeta_0\big|^r
-\frac12\big|\nabla\zeta_\tau(T)\big|^2-\frac\eps r\big|\nabla\zeta_\tau(T)\big|^r
\dd x
\\
\nonumber
&\qquad\le\int_\Omega\frac12\big|\nabla\zeta_0|^2+\frac\eps r\big|\nabla\zeta_0\big|^r
-\frac12\big|\nabla\zeta(T)\big|^2-\frac\eps r\big|\nabla\zeta(T)\big|^r\dd x
\\\nonumber&\qquad
=\int_Q\mathrm{div}((1{+}\eps|\nabla\zeta|^{r-2})\nabla\zeta)\DT\zeta\dd x\d t
\end{align}
where we used 
\eqref{conv-zeta} at $t=T$ and where the last equality 
relies on the regularity property 
$\mathrm{div}((1{+}\eps|\nabla\zeta|^{r-2})\nabla\zeta)\in L^2(Q)$
and can be proved either by a mollification in space \cite[Formula (3.69)]{PoRoTo10TCTF}
and or in time by a time-difference technique \cite[Formula (2.15)]{Gru95DPEF}.

The convergence in the inclusion $\barxi_\tau\in N_{[0,1]}(\barzeta_\tau)$ is 
easy due to the maximal monotonicity of $N_{[0,1]}(\cdot)$ and the convergences 
\eqref{conv-xi} and $\barzeta_\tau\to\zeta$ strongly in $L^2(Q)$ which can be 
proved by a generalized version of the Aubin-Lions theorem, cf.\ 
\cite[Corollary~7.9]{Roub13NPDE}, or here even in $L^\infty(Q)$ was proved as 
in Step 1. Having proved $\xi\in N_{[0,1]}(\zeta)$, we can also see that
\begin{align}\label{limit-in-xi-dot-zeta}
&\limsup_{\tau\to0}\int_Q\barxi_\tau(-\DT\zeta_\tau)\,\d x\d t
=\limsup_{\tau\to0}\bigg(\int_\Omega\Indic^{}_{[0,1]}(\zeta_0)\,\d x
-\int_\Omega\Indic^{}_{[0,1]}(\zeta_\tau(T))\,\d x\bigg)
\\[-.2em]&\nonumber\qquad\qquad\qquad\qquad
\le\int_\Omega\Indic^{}_{[0,1]}(\zeta_0)\,\d x-\int_\Omega\Indic^{}_{[0,1]}(\zeta(T))\,\d x
=\int_Q\xi(-\DT\zeta)\,\d x\d t,
\end{align}
which is needed for the limit passage in \eqref{flow-rule-disc}; in fact, even 
the limit and the equality hold in \eqref{limit-in-xi-dot-zeta}.

\medskip

\noindent \emph{Step 5: Energy equality.} 
We test \eqref{plast-dam13} which holds a.e.\ on $Q$ by $\DT\zeta$. This test 
is legal as all terms in \eqref{plast-dam13} as well as $\DT\zeta$ are in 
$L^2(Q)$. We again use the last equality in \eqref{limsup}. Moreover, as 
$\xi\in\partial\Indic_{[0,1]}(\zeta)$, we have $\int_Q\xi\DT\zeta\dd x\d t
=\int_\Omega\Indic_{[0,1]}(\zeta(T))-\Indic_{[0,1]}(\zeta(0))\d x=0-0=0$. We 
thus obtain
\begin{align}\label{engr-bal-dam}
&\int_\Omega
\frac\kappa2\big|\nabla\zeta(T)\big|^2
+\frac{\eps\kappa}r\big|\nabla\zeta(T)\big|^r-b(\zeta(T))\dd x
\ \
\\[-.3em]&\ \ +\!\int_Q\frac12\mathbb C'(\zeta)e_\mathrm{el}:e_\mathrm{el}
+\widehat a(\DT\zeta)\dd x\d t=\int_\Omega
\frac\kappa2\big|\nabla\zeta_0|^2+\frac{\eps\kappa}r\big|\nabla\zeta_0\big|^r-b(\zeta_0)
\dd x.
\nonumber\end{align}

Furthermore, we test formally \eqref{plast-dam1} by $\DT u$ and 
\eqref{plast-dam12} by $\DT\pi$. The rigorous calculations uses the
approximation of the 
Stieltjes-type integral by Riemann sums and semistability, cf.\ 
\cite[Formulas (76)--(82)]{Roub13TPP} which adapts technique
developed in the theory of rate-independent processes
\cite{DaFrTo05QCGN,Miel05ERIS}. Here, as $\mathbb C$ is not constant, 
we will still see the term 
$(\frac12\mathbb C'(\zeta)e_\mathrm{el}{:}e_\mathrm{el})\DT\zeta$ which results 
by the formal substitution
$\mathbb C(\zeta)e_\mathrm{el}{:}\DT e_\mathrm{el}=\frac{\partial}{\partial t}
\frac12\mathbb C(\zeta)e_\mathrm{el}{:}e_\mathrm{el}
-(\frac12\mathbb C'(\zeta)e_\mathrm{el}{:}e_\mathrm{el})\DT\zeta$;
note that $\mathbb C(\zeta)e_\mathrm{el}{:}\DT e_\mathrm{el}$
is not well defined since $\DT e_\mathrm{el}$ is not well controlled.
Thus we obtain
\begin{align}\label{engr-bal-plast}
&\int_\Omega\frac12\mathbb C(\zeta(T))e_\mathrm{el}(T){:}e_\mathrm{el}(T)
+\frac12\mathbb H\nabla e_\mathrm{el}(T)\Vdots\nabla e_\mathrm{el}(T)\dd x
\\[-.3em]&\nonumber\qquad\quad+\int_{[0,T]\times\barOmega}\!\sY(\zeta)\big|\DT\pi\big|(\d x\d t)
=\int_Q\Big(\frac12\mathbb C'(\zeta)e_\mathrm{el}{:}e_\mathrm{el}\Big)
\DT\zeta\dd x\d t
\\[-.3em]&\qquad\qquad\qquad+\int_\Omega\frac12\mathbb C(\zeta_0)e_\mathrm{el}(0){:}e_\mathrm{el}(0)
+\frac12\mathbb H\nabla e_\mathrm{el}(0)\Vdots\nabla e_\mathrm{el}(0)\dd x.
\nonumber\end{align}
Summing \eqref{engr-bal-dam} and \eqref{engr-bal-plast} then gives the
energy balance \eqref{engr-bal}.
\end{proof}

\medskip


Further, to implement the model computationally, we need to make 
a spatial discretisation of the time-discrete scheme 
\eqref{plast-dam-disc}--\eqref{BC-disc}. To this goal, we use
the lowest-order 
conformal finite-element method (FEM). In view of the used regularity 
\eqref{est-Delta-zeta+}, the straightforward discretisation 
therefore employs P2-elements for $u$ and $\zeta$ and P1-elements for $\pi$. 
Rigorously speaking, due to the assumed smoothness \eqref{ass-Omega},
one should consider FEM on a nonpolyhedral, curved domain. 
The minimization problems \eqref{minimization} are then to be restricted on 
the corresponding finite-dimensional subspaces, and the solution thus 
obtained is denoted by $u_{\tau h}^k$, $\pi_{\tau h}^k$, and $\zeta_{\tau h}^k$,
with $h>0$ denoting the mesh size. By this way, we obtain also the
piecewise constant and affine interpolants in time, denoted 
by $\baru_{\tau h}$ and $u_{\tau h}$, $\barpi_{\tau h}$ and $\pi_{\tau h}$, 
and eventually $\barzeta_{\tau h}$ and $\zeta_{\tau h}$.
Also, $\barxi_{\tau h}$ can be obtained analogously as before in 
Lemma~\ref{lem-disc-2}. 

\medskip

\begin{proposition}[Convergence of the FEM discretisation]\label{prop-FEM}
Let \eqref{ass} be satisfied, 
and the P2-FEM for $u$ and $\zeta$ and P1-FEM for $\pi$ is applied with $h>0$ 
the mesh size. Then:\\
\Item{(i)}{the a-priori estimates \eqref{est} and \eqref{est-of-xi} hold when 
modified for $u_{\tau h}$, $\pi_{\tau h}$, $\zeta_{\tau h}$, and $\barxi_{\tau h}$ 
with $C$ independent of $\tau>0$ and now of $h>0$, too.} 
\Item{(ii)}{Moreover, these fully discrete solutions converge (in terms of 
subsequences) in the mode as \eqref{conv} towards weak solutions according 
Definition~\ref{weak-sln} when simultaneously $\tau\to0$ and $h\to0$.}
\end{proposition}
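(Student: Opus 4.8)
The plan is to replay, for the fully discrete scheme, the three-part argument of Lemma~\ref{lem-disc}, Lemma~\ref{lem-disc-2} and Proposition~\ref{prop-conv}, using that the conformal Galerkin restriction preserves the variational structure exploited there, and then to run the joint limit $\tau\to0$, $h\to0$ with the same compactness tools, the P2/P1 subspaces being dense in the state space.

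\emph{Ad (i).} The fully discrete iterates are minimizers of the restrictions of \eqref{minimization-1} and \eqref{minimization-2} to the finite-element subspaces, and---the only structural point---since these subspaces are independent of $k$, the previous iterates $(u_{\tau h}^{k-1},\pi_{\tau h}^{k-1},\zeta_{\tau h}^{k-1})$ are themselves admissible competitors. Testing the two minimization problems by them reproduces the discrete energy inequalities \eqref{est1}--\eqref{est2}, hence \eqref{enegr-est}, so that the telescopic summation together with \eqref{DTE}, the safe-load qualification in \eqref{ass-load} and the discrete Gronwall inequality gives the analogues of (\ref{est}a--d) with $C$ independent of $\tau$ \emph{and} $h$. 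The estimate \eqref{est-Delta-zeta+} must be reinterpreted at the discrete level, since the test function $-\mathrm{div}((1{+}\eps|\nabla\zeta_{\tau h}^k|^{r-2})\nabla\zeta_{\tau h}^k)$ used in Lemma~\ref{lem-disc} is not conformal (that divergence is not even an $L^2$ function for a P2 field). I would test instead by the discrete $r$-Laplacian $w_{\tau h}^k\in V_h$ (with $V_h$ the P2 subspace for $\zeta$) defined by $\int_\Omega w_{\tau h}^k v_h\,\d x=\int_\Omega(1{+}\eps|\nabla\zeta_{\tau h}^k|^{r-2})\nabla\zeta_{\tau h}^k{\cdot}\nabla v_h\,\d x$ for all $v_h\in V_h$, i.e.\ the conformal surrogate of the missing divergence; testing the discrete Euler--Lagrange inequality by $w_{\tau h}^k$ and carrying out the positive-semidefiniteness computation of Lemma~\ref{lem-disc} for the box constraint on the P2 nodal values yields $\|w_{\tau h}\|_{L^2(Q)}\le C$ uniformly, whence \eqref{est-of-xi} for $\barxi_{\tau h}$ by comparison exactly as in Lemma~\ref{lem-disc-2}. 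In the limit, $w_{\tau h}\to\mathrm{div}((1{+}\eps|\nabla\zeta|^{r-2})\nabla\zeta)$ weakly in $L^2(Q)$, the identification of the limit relying on the maximal monotonicity of the $r$-Laplacian operator and the uniform $W^{1,r}$-bound, just as in \eqref{conv}.

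\emph{Ad (ii).} With the $h$-uniform estimates in hand, Step~1 of Proposition~\ref{prop-conv}---Banach and Helly selection, the Gagliardo--Nirenberg interpolation yielding $\underline\zeta_{\tau h}\to\zeta$ in $L^\infty(Q)$, and the Aubin--Lions/Arzel\`a--Ascoli compactness for $\zeta$---goes through verbatim, giving the convergences \eqref{conv} with $\tau h$ in place of $\tau$. Steps~2, 4 and 5 (lower-semicontinuity passage in the energy imbalance, the stress-uniqueness computation \eqref{uniqueness-of-stress}, which lives entirely at the continuous level, the $\limsup$ estimates \eqref{limsup} and \eqref{limit-in-xi-dot-zeta}, and the test of \eqref{plast-dam13-disc} by $\DT\zeta$) are insensitive to $h$ and carry over unchanged. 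The place needing real modification is Step~3: the competitor $(\wt u,\wt\pi)$ in the semistability is a general pair in $\mathrm{BD}(\barOmega;\R^d){\ti}\mathrm{Meas}(\barOmega;\R^{d\ti d}_{\mathrm{dev}})$, so the mutual recovery sequence \eqref{mrs} is replaced by $\wt u_{\tau h}=\baru_{\tau h}(t)+\Pi_h(\wt u{-}u(t))$, $\wt\pi_{\tau h}=\barpi_{\tau h}(t)+\Pi_h(\wt\pi{-}\pi(t))$ with an interpolation operator $\Pi_h$ mapping $\mathrm{BD}$- (resp.\ $\mathrm{Meas}$-) functions into the P2- (resp.\ P1-) subspace strictly, so that all the energy terms in the binomial-trick chain \eqref{semi-stab-proof} converge; here $\Pi_h$ must be chosen compatible with the kinematic constraint on $\GD$, which is possible because the discrete displacement has zero Dirichlet trace and the discrete plastic strain correspondingly vanishes on $\GD$. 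With these choices the whole limit passage of Proposition~\ref{prop-conv} reproduces Definition~\ref{weak-sln}.

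\emph{Main obstacle.} The genuinely non-routine ingredient is the $\tau$- and $h$-uniform gradient-damage regularity \eqref{est-Delta-zeta+} (and hence \eqref{est-of-xi}): the natural test function of Lemma~\ref{lem-disc} is no longer conformal and the fully discrete damage solves only a Galerkin variational inequality with a box constraint on nodal values, so the positive-semidefiniteness trick has to be redone for this discrete constraint set---the discrete $r$-Laplacian device above is what makes it work. The subsidiary difficulty is the construction of the constraint-compatible interpolations $\Pi_h$ on $\mathrm{BD}$ and $\mathrm{Meas}$, which is the classical technical point in finite-element approximation of perfect plasticity and can be handled by the regularisation/density arguments used e.g.\ in \cite{DaDeMo06QEPL} (cf.\ also \cite{AmFuPa00FBVF}).
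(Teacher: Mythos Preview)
Your approach is essentially the same as the paper's, which offers only a one-paragraph sketch declaring the modification ``rather routine'' and singling out precisely the mutual recovery sequence you describe, namely $\wt u_{\tau h}=\baru_{\tau h}(t)+\Pi_h^{(2)}(\wt u{-}u(t))$ and $\wt\pi_{\tau h}=\barpi_{\tau h}(t)+\Pi_h^{(1)}(\wt\pi{-}\pi(t))$ with $\Pi_h^{(1)},\Pi_h^{(2)}$ the P1- and P2-projectors (citing \cite{BaMiRo12QSSP} for this construction and omitting all further details). Your Steps~1, 2, 4, 5 and the Step~3 modification therefore track the paper exactly.

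Where you go beyond the paper is in flagging the $h$-uniform version of \eqref{est-Delta-zeta+} as the genuinely non-routine point and proposing the discrete $r$-Laplacian surrogate $w_{\tau h}^k$. The paper asserts this estimate holds but does not comment on the fact that the continuous test function $-\mathrm{div}((1{+}\eps|\nabla\zeta_{\tau h}^k|^{r-2})\nabla\zeta_{\tau h}^k)$ is non-conformal; your diagnosis is sharper than what the paper provides. Be aware, though, that your own sketch of the positive-semidefiniteness trick for the \emph{nodal} box constraint is not airtight: one needs that the discrete multiplier $\xi_{\tau h}^k$ paired against $w_{\tau h}^k$ has the right sign, which amounts to a discrete comparison/maximum-principle statement for the nonlinear $r$-Laplacian that is delicate on general meshes and for P2 elements (where nodal bounds do not imply pointwise bounds). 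This is a gap you share with the paper, not one you have introduced; you have simply made it visible.
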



The modification of the proof of this joint convergence of time-and-space 
discretisation is rather routine, the explicit construction of the mutual 
recovery sequence \eqref{mrs} taking additionally a finite-element 
approximation like in \cite{BaMiRo12QSSP}, namely
$\wt u_{\tau h}=\baru_{\tau h}(t)+\Pi_h^{(2)}(\wt u-u(t))$ and 
$\wt \pi_{\tau h}=\bar \pi_{\tau h}(t)+\Pi_h^{(1)}(\wt \pi-\pi(t))$
with $\Pi_h^{(1)}$ and $\Pi_h^{(2)}$ denoting a projector onto the P1- 
and P2 FE-spaces, respectively; we omit details about this modification.

\medskip

\begin{remark}[{\sl Damage discretised by P1-elements}] \label{remark:P1damage}
\upshape
The damage flow rule \eqref{plast-dam13} itself suggests to use only 
P1-elements for $\zeta$ which are, naturally, more easy to implement than the 
P2-elements used in Proposition~\ref{prop-FEM}. Then however 
\eqref{est-Delta-zeta+} cannot be expected for the FEM approximation of 
$\zeta$ and also a direct P-1 FEM analog of \eqref{flow-rule-disc} 
cannot hold. Instead of \eqref{flow-rule-disc}, we have
\begin{align}\label{flow-rule-disc+}
&\int_Q\!\bigg(a(v)+\Big(\frac12\mathbb C'(\underline\zeta_{\tau h})
\bare_\mathrm{el,\tau}:\bare_\mathrm{el,\tau}
-b'(\barzeta_{\tau h})+\barxi_{\tau h}\Big)(v-\DT\zeta_{\tau h})
\\[-.4em]
&\hspace{4em}
+\kappa\,
\big((1{+}\eps|\nabla\barzeta_{\tau h}|^{r-2})\nabla\barzeta_{\tau h}\big)
\cdot\nabla(v{-}\DT\zeta_{\tau h})
\bigg)\dd x\dd t
\ge\int_Qa(\DT\zeta_{\tau h})\dd x\dd t
\nonumber\end{align}
for any $v$ valued in the finite-dimensional P1-FE subspace.
Yet, the sequence $\{\nabla\DT\zeta_{\tau h}\}_{\tau>0,h>0}$ cannot be expected 
bounded. Thus, for the limit passage, instead of \eqref{flow-rule-disc+} one 
should rather use the discrete by-part integration (summation) in time like 
we did in \eqref{limsup}, leading to
\begin{align}\label{flow-rule-disc++}
&\int_Q\bigg(a(v)+\Big(\frac12\mathbb C'(\underline\zeta_{\tau h})
\bare_\mathrm{el,\tau}:\bare_\mathrm{el,\tau}
-b'(\barzeta_{\tau h})+\barxi_{\tau h}\Big)(v-\DT\zeta_{\tau h})
\\[-.3em]\nonumber
&\hspace{2em}+\kappa\,
\big((1{+}\eps|\nabla\barzeta_{\tau h}|^{r-2})\nabla\barzeta_{\tau h}\big)
\cdot\nabla v\bigg)\dd x\dd t
+\int_\Omega\frac\kappa2|\nabla\zeta_0|^2\!+\frac{\eps\kappa}r|\nabla\zeta_0|^r\dd x
\\[-.3em]\nonumber
&\hspace{4em}
\ge\int_Q\!a(\DT\zeta_{\tau h})\dd x\dd t
+\int_\Omega\frac\kappa2|\nabla\zeta_{\tau h}(T)|^2\!
+\frac{\eps\kappa}r|\nabla\zeta_{\tau h}(T)|^r\dd x
\end{align}
which holds for any $v$ valued in the P1-finite-element space. Now, however, 
we do not have the estimates \eqref{est-Delta-zeta+} and \eqref{est-of-xi}.
Anyhow, the limit passage seems possible by using the strategy proposed by Colli and 
Visintin \cite{ColVis90CDNE}, cf.\ also \cite[Sect.\,11.1.2]{Roub13NPDE}, 
allowing for the stored energy $\scrE$ taking values $+\infty$ but relying on 
boundedness of $\scrR$, as indeed our situation. The convergence is, of course,
in a weaker mode than \eqref{conv}.
Only after this limit passage, we can prove the regularity \eqref{H2-zeta}
and go back to the weak formulation \eqref{flow-rule-weak}
by using also the arguments which we use for the last equality in \eqref{limsup}.
\end{remark}

\section{Implementation of the fully discrete model}\label{sec-comp}
%
%
%
%
%

The implementation of the model addressed in Proposition~\ref{prop-FEM}
is rather cumbersome because of high-order FEM involved. 
Therefore we dare make few shortcuts:
P1-elements are used for damage $\zeta$ according to Remark \ref{remark:P1damage}. 
Moreover, the (anyhow usual small and even not reliably known) hyperelasticity moduli 
are neglected, i.e.\ $\mathbb H=0$ and then small-strain tensor gradients 
$\nabla e(u)$ are not involved. Consequently, only P1-elements 
can be used for displacement $u$ and P0-elements for plastic strain. 
Only the case $d=2$ is treated, so the previous analytical part have 
required $r>2$ and we dare 
make another (indeed small) shortcut by considering $r=2$ (and therefore
by putting $\epsilon=0$ the damage-gradient term in \eqref{seismic-E+} become
quadratic). 

The material is assumed isotropic with properties linearly dependent on damage.
The isotropic elasticity tensor is assumed as 
\begin{equation}\label{C-ass-implem}
\mathbb C_{ijkl} (\zeta):=[(\lambda_1{-}\lambda_0) \zeta + \lambda_0] \delta_{ij}\delta_{kl}  + [(\mu_1{-}\mu_0) \zeta + \mu_0] (\delta_{ik}\delta_{jl}+  \delta_{il}\delta_{jk})               
\end{equation}
where $\lambda_1,\mu_1$ and  $\lambda_0,\mu_0$ are two sets of Lam\'e parameters satisfying
$$
\lambda_1 \geq \lambda_0 \geq 0, \qquad \mu_1 \geq \mu_0 > 0.
$$ 
Here, $\delta$ denotes the Kronecker 
symbol. This choice implies that the elastic-moduli tensor satisfies \eqref{ass-C} and 
it is even positive-definite-valued (and therefore invertible). Values of 
$\mathbb{C}^{}_{\mbox{\tiny\rm D}}(\zeta)$ and $c^{}_{\mbox{\tiny\rm S}}(\zeta)$ in \eqref{ass-C} 
follow from a decomposition of the elastic strain energy $\frac12\mathbb C(\cdot)e{:}e$ 
into the deviatoric and the volumetric parts of the strain tensor $e$. The stored 
energy of damage compliant with \eqref{ass-b} is assumed in the form
\begin{equation}
b(\zeta):=b_1\, \zeta,
\end{equation}
where 
$b_1>0$ means the specific energy stored in the microcracks/microvoids created by
damaging the material. By healing, this energy can be recovered back. 
The plastic yield stress compliant with \eqref{ass-S} is assumed in the form 
\begin{equation}\label{special-yield-stress}
\sY(\zeta)=\big(\sigma_{\mbox{\tiny\rm Y},1}{-}\sigma_{\mbox{\tiny\rm Y},0}\big)\zeta+\sigma_{\mbox{\tiny\rm Y},0},
\end{equation}
where  $\sigma_{\mbox{\tiny\rm Y},1}\ge\sigma_{\mbox{\tiny\rm Y},0} >0$.
The damage-dissipation potential is assumed in the piecewise quadratic form
\begin{equation}\label{damage_disipation_potential}
a(\DT\zeta):=\frac12a_1(\DT\zeta^+)^2+\frac12a_2(\DT\zeta^-)^2 +a_3(\DT\zeta^-),
\end{equation} 
where 
$\DT\zeta^+=\max \{0,\DT\zeta\}$ and $\DT\zeta^-=\max \{-\DT\zeta, 0 \}$ and $a_1, a_2, a_3$ are given (material) nonnegative parameters. Values of $a_1$ and $a_2$ determine rate-dependent parts of healing and damage model components and the value of $a_3$ a rate-independent damage activation. The form of $a(\cdot)$ satisfies \eqref{ass-a}. 

With respect to the fractional-step strategy of Section 3, we solve first 
for $(u_{\tau h}^k,\pi_{\tau h}^k)$ 
from the elastoplastic minimization problems \eqref{minimization-1} and then $\zeta_\tau^k$ 
from the damage minimization problem \eqref{minimization-2} recursively for $k=1,...,T/\tau$.
In view of the above shorcuts and simplifications, the minimization problems 
\eqref{minimization-1} and \eqref{minimization-2}  rewrite as
\begin{align}\label{minimization-1-implem}
&(u_{\tau h}^k,\pi_{\tau h}^k) = \argmin_{u,\pi}
 \int_{\Omega}\bigg(\frac12\mathbb C(\zeta_{\tau h}^{k-1})\big(e(u{+}\uDtauhk){-}\pi\big):\big(e(u{+}\uDtauhk){-}\pi \big)
   \\[-.3em]  & \qquad \qquad \qquad \qquad\qquad 
-g_{\tau h}^k{\cdot}u+ \sY(\zeta_{\tau h}^{k-1}) | \pi{-}\pi_{\tau h}^{k-1}| \bigg)\dd x
   -\!\int_{\GN}\!\! f_{\tau h}^k {\cdot}u\dd S,
\nonumber\\
&\zeta_{\tau h}^k   = \argmin_{\zeta}   \int_{\Omega}\bigg(\frac12\mathbb C(\zeta)\big(e(u_{\tau h}^k{+}\uDtauhk){-}\pi_{\tau h}^k\big):\big(e(u_{\tau h}^k{+}\uDtauhk{-}\pi_{\tau h}^k\big){-}b_1\zeta  
\label{minimization-2-implem} \\[-.3em] &  \qquad 
+\frac12 \kappa |\nabla\zeta|^2  + 
\frac{1}{2\tau} a_1( \zeta{-} \zeta_{\tau h}^{k-1})^+ +
\frac{1}{2\tau} a_2( \zeta{-} \zeta_{\tau h}^{k-1})^-  +
a_3( \zeta{-} \zeta_{\tau h}^{k-1})^-
\bigg)\dd x ,  \nonumber 
\end{align}
where $u$ is searched over P1-elements satisfying Dirichlet boundary 
conditions, $\pi$ over P0-elements satisfying elementwise trace-free 
condition $\tr\pi=0$ and $\zeta$ over P1-elements satisfying the nodal box 
constraint $\zeta\in[0,1]$. The form of \eqref{minimization-1-implem} 
corresponds to the minimization problem of perfect plasticity with the 
elasticity tensor and  the plastic yield stress 
depending on the damage variable in the previous time level. The energy in 
\eqref{minimization-1-implem} is transformed to an energy in the variable $u$ 
only by substituting the elementwise dependency of $\pi$ on $u$, 
see \cite{AlCaZa99ANAPEH,CeKoSyVa14TDDSEP} for more details. Then, the 
quasi-Newton iterative methods is applied to solve $u_{\tau h}^k$ while 
$\pi_{\tau h}^k$ is reconstructed from it. More details on this specific 
elastoplasticity solver can be found e.g.\ in 
\cite{CeKoSyVa14TDDSEP,GKNT10ANAT,GruVal09SOTSPESNM}.

The damage minimization problem \eqref{minimization-2-implem} represents a 
minimization of a nonsmooth but strictly convex functional. It can be 
reformulated to a modified problem
\def\ZZ{z}
\begin{subequations}\begin{align}
\label{minimization-2-QP}
&\argmin_{\zeta, \ZZ_+,  \ZZ_-}\int_{\Omega}\bigg(\,\frac12
\mathbb C (\zeta) \big(e(u_{\tau h}^k{+}\uDtauhk){-}\pi_{\tau h}^k\big):\big(e(u_{\tau h}^k{+}\uDtauhk{-}\pi_{\tau h}^k\big)
\\[-.6em]\nonumber
&\qquad\qquad\quad-b_1\zeta+\frac12 \kappa |\nabla \zeta|^2   
+ \frac{1}{2 \tau} a_1  (\ZZ_+)^2  + \frac{1}{2 \tau} a_2  ( \ZZ_-)^2 + a_3  \ZZ_- 
\bigg)\dd x,
\\[-.1em]\label{minimization-2-QP+}
&\text{where }\
\ZZ_+ = (\zeta{-}\zeta_{\tau h}^{k-1})^+ , \ZZ_- = (\zeta{-}\zeta_{\tau h}^{k-1})^- 
\end{align}\end{subequations}
are additional `update' variables.
It should be noted that $\zeta$ and $\zeta_{\tau h}^{k-1}$ are 
P1-functions and therefore $\ZZ_+$ and $\ZZ_-$ are not P1-functions in 
general on elements where nodal values of $\zeta{-}\zeta_{\tau h}^{k-1}$ 
alternate
signs. However, if we restrict $z_+, z_-$  to P1-functions while \eqref{minimization-2-QP+}
is required on at nodal points, then \eqref{minimization-2-QP} actually represents a 
conventional quadratic-programming problem (QP), in which we require a linear and box constraints
\begin{equation}\label{linear_and_box_constraints_QP}
\zeta = \zeta_{\tau h}^{k-1} + \ZZ_+  - \ZZ_-, \qquad \ZZ_+ \in [0, 1- \zeta_{\tau h}^{k-1}], \qquad \ZZ_- \in [0, \zeta_{\tau h}^{k-1}].
\end{equation}
A quadratic cost functional of this QP problem has a positive-semidefinite Jacobian, 
since there are no Dirichlet boundary conditions on the damage variable $\zeta$. 
Note that the optimal 
pair $(\ZZ_+, \ZZ_-)$ must satisfy $\ZZ_+\ZZ_- {=} 0$ in all nodes, i.e.\ both 
variables cannot be positive. This can be easily seen by contradiction: 
If $\ZZ_+  \ZZ_- {>} 0$ in some node, then a different pair 
$(\ZZ_+{-}\min\{\ZZ_+,\ZZ_-\},\ZZ_-{-}\min\{\ZZ_+,\ZZ_-\})$ would again satisfy 
the constraints \eqref{linear_and_box_constraints_QP} but would provide a smaller 
energy value in \eqref{minimization-2-QP}.

Our MATLAB implementation is available for download at Matlab Central as a 
package {\it Continuum undergoing combined elasto-plasto-damage transformation},
cf.\ \cite{code}.
It is based on an original elastoplasticity code related to multi-surface models \cite{BrCaVa05QSBVPMS}. 
The code is simplified to work with one surface variable only (which corresponds to the 
classical model of kinematic hardening) and sets the hardening parameter to zero to enforce 
perfect plasticity. It partially utilizes vectorization techniques of \cite{RaVa13FMAFEM2D3D} 
and works reasonably fast also for finer rectangular meshes.

\section{Illustrative computational simulations}\label{sec-simul}
We consider a time-simulation of a 2-dimensional continuum visualized 
in Figure~\ref{fig5-plast-dam-geom} describing two ``plates'' moving 
horizontally in opposite directions with the constant velocity 
$\pm10^{-8}$m/s\,$\doteq$\,30\,cm/yr. 
The model 
has 
applications in geophysics, specifically in modelling of tectonic and seismic
processes in crustal parts of the earth lithosphere in the relatively 
short or very short time scales (meaning substantially less than a million 
of years) where the small-strain concept and solid mechanics are well relevant.
The hardening is naturally considered zero.
The damage variable is in the position of a so-called ageing. 
The healing together with the damage-dependent 
plastic yield stress allow for periodically alternating fast damage and 
slow healing under external loading with constant velocity, which is a 
typical stick/slip-type events of flat partly damaged subdomains (so-called
lithospheric faults) manifested by re-occurring earthquakes. 


\begin{figure}[th]
\psfrag{400}{\scriptsize $400\,$m}
\psfrag{100}{\footnotesize $100\,$m}
\psfrag{20}{\footnotesize $20\,$m}
\psfrag{8}{\footnotesize $8\,$m}
\psfrag{W}{\small $\Omega$}
\psfrag{partly damaged}{\footnotesize
\begin{minipage}[t]{10em}
initially partly\\[-.2em]
damaged zone\end{minipage}}
\psfrag{GN}{\footnotesize $\GN$}
\psfrag{GD}{\footnotesize $\GD$}
\begin{center}
\hspace*{.5em}\includegraphics[width=.7\textwidth]{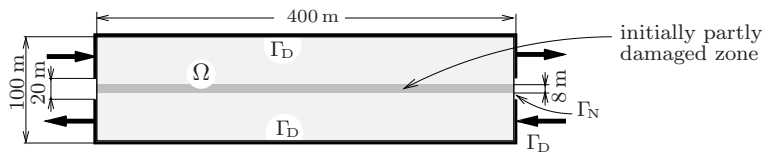}
\end{center}
\vspace*{-.9em}
\caption{Geometry used for the computational experiment, imitating the fault 
between two plates moving horizontally in opposite directions. The time-dependent 
Dirichlet conditions are prescribed on $\GD$, 
using the constant velocity $\pm10^{-8}{\rm m/s}\,\doteq\,30\,{\rm cm/year}$.
}
\label{fig5-plast-dam-geom}
\end{figure}

The domain $\Omega$ is assumed to be occupied by an elastic continuum specified 
by an isotropic homogeneous elasticity tensor in the form \eqref{C-ass-implem}
with 
$\lambda_1 = 7.5\,$GPa and $\mu_1=11.25\,$GPa
(which corresponds to Young's modulus $E_{_\mathrm{Young}}\!\!\!\!=27\,$GPa and Poisson' ratio $\nu=0.2$
in the non-damage state)
while the damaged material uses ten-times less moduli, 
i.e.\ $\lambda_0= 0.75\,$GPa and $\mu_0=1.125\,$GPa in \eqref{C-ass-implem}.
The yield stress $\sigma_{\rm y}$ in \eqref{special-yield-stress} 
ranges between the values $\sigma_{\mbox{\tiny\rm Y},1}
=2\,$MPa and $\sigma_{\mbox{\tiny\rm Y},0}=\sigma_{\mbox{\tiny\rm Y},1}\times10^{-12}$.
The damage-dissipation potential \eqref{damage_disipation_potential} is specified by constants 
$a_1=100\,$GPa\,s
and $a_3=10\,$Pa while 
the damage viscosity $a_2$ will vary. The stored energy of damage is 
$b_1=0.001\,$J/m$^{3}$ 
with 
the damage length-scale coefficient 
$\kappa=0.001\,$J/m. The initial conditions ensure that 
$\pi_0\!=\!0$, $\zeta_0\!=\!1$ (or $\zeta_0\!=\!1/2$ in a middle narrow horizontal stripe). 

The first numerical test is run for discrete times in the interval $0\le t\le 400\,$ks 
with 
the equidistant time partition using the time-step $\tau=1\,$ks. The spatial discretisation
of the domain $\Omega$ used a uniform triangular mesh with $4608$ elements and 2373 nodes; 
this mesh is available by setting 'level=2' in the code \cite{code}, while finer uniform 
meshes can be generated by putting higher values of  the `level' parameter. Thus, 400 
time-steps are computed and Figure \ref{fig:fields} displays space-distributions of the 
shifted damage $1-\zeta$, of the Frobenius norm of the plastic strain $\pi$, and of the 
von Mises stress $|\dev(\sigma)|$ at 
selected instants.

\begin{figure}[h]
\center
\quad\includegraphics[width=0.84\textwidth]{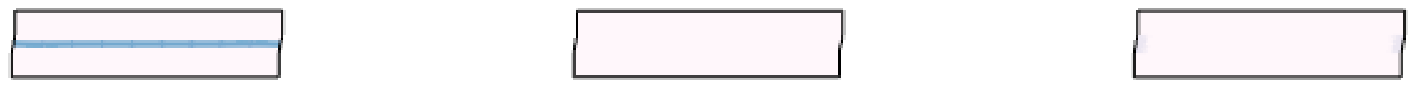}\quad t=20\,ks
\\[.3em]
\quad\includegraphics[width=0.84\textwidth]{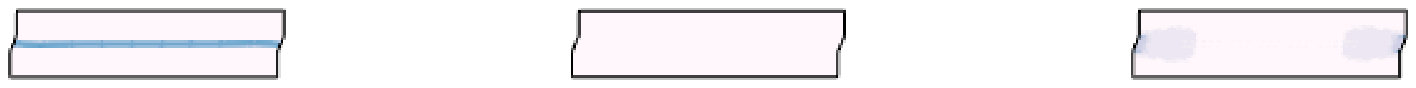}\quad t=40\,ks
\\[.3em]
\quad\includegraphics[width=0.84\textwidth]{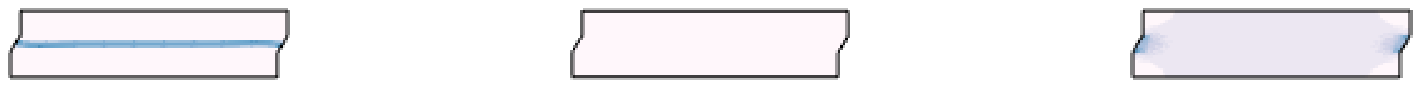}\quad t=60\,ks
\\[.3em]
\quad\includegraphics[width=0.84\textwidth]{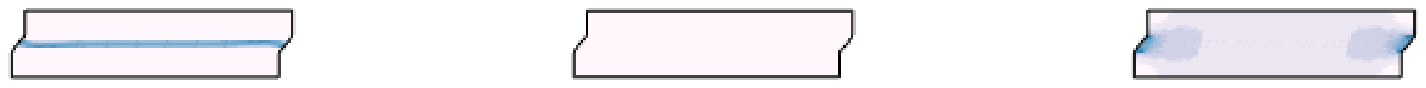}\quad t=80\,ks
\\[.3em]
\quad\includegraphics[width=0.84\textwidth]{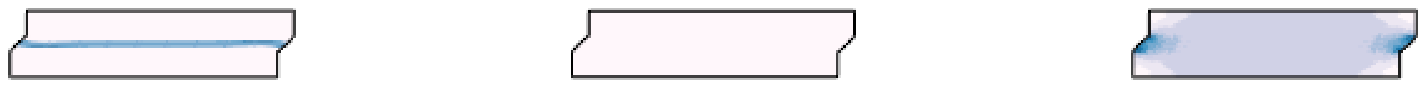}\quad  t=100\,ks
\\[.3em]
\quad\includegraphics[width=0.84\textwidth]{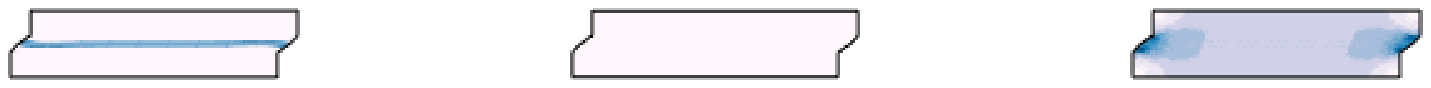}\quad  t=120\,ks
\\[.3em]
\quad\includegraphics[width=0.84\textwidth]{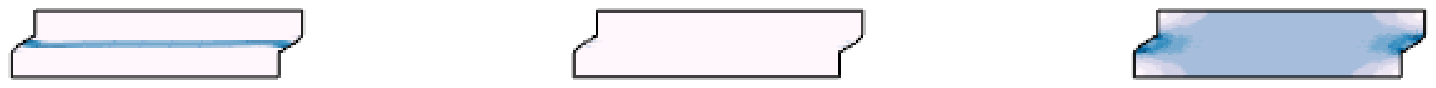}\quad t=140\,ks
\\[.3em]
\quad\includegraphics[width=0.84\textwidth]{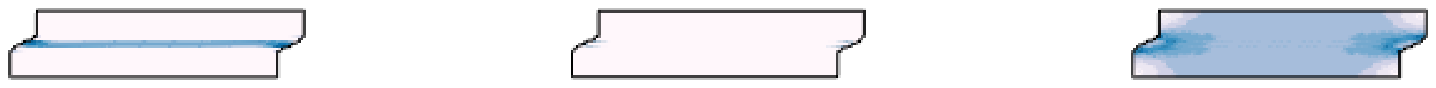}\quad  t=160\,ks
\\[.3em]
\quad\includegraphics[width=0.84\textwidth]{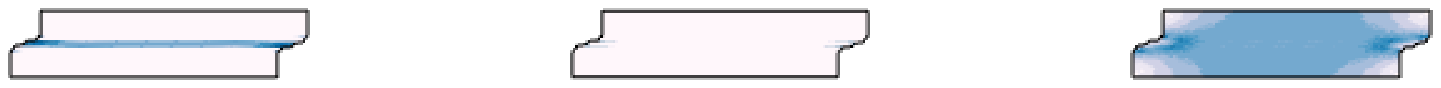}\quad  t=180\,ks
\\[.3em]
\quad\includegraphics[width=0.84\textwidth]{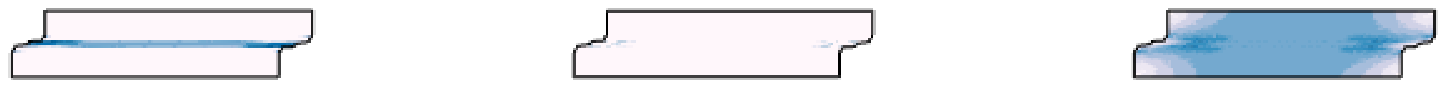}\quad  t=200\,ks
\\[.3em]
\quad\includegraphics[width=0.84\textwidth]{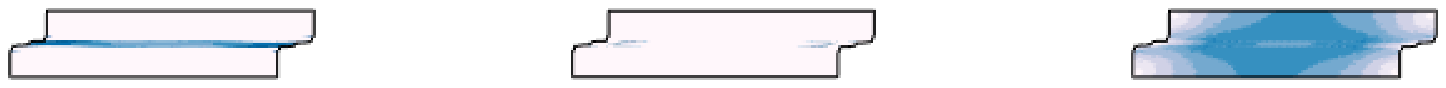}\quad  t=220\,ks
\\[.3em]
\quad\includegraphics[width=0.84\textwidth]{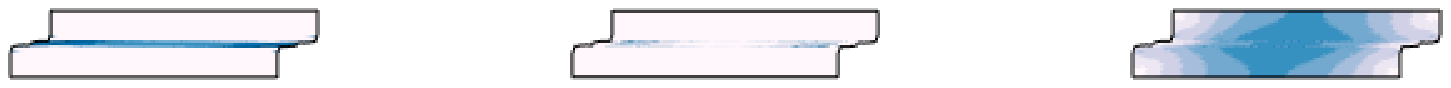}\quad  t=240\,ks
\\[.3em]
\quad\includegraphics[width=0.84\textwidth]{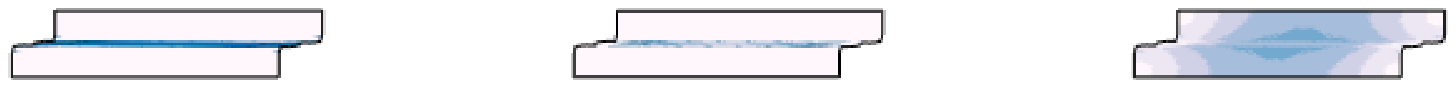}\quad t=260\,ks
\\[.3em]
\quad\includegraphics[width=0.84\textwidth]{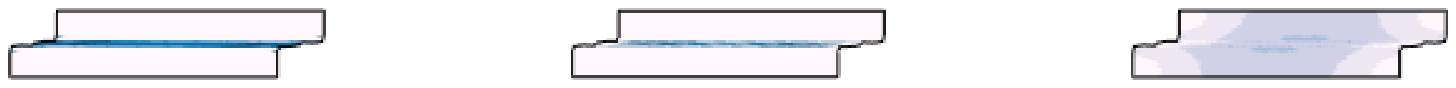}\quad t=280\,ks
\\[.3em]
\quad\includegraphics[width=0.84\textwidth]{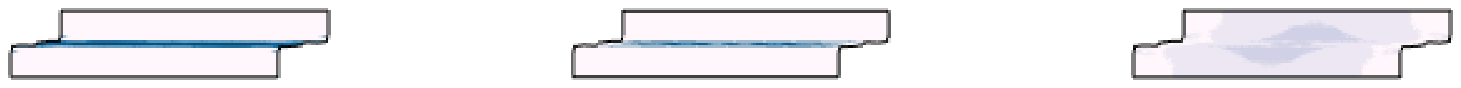}\quad t=300\,ks
\\[.3em]
\quad\includegraphics[width=0.84\textwidth]{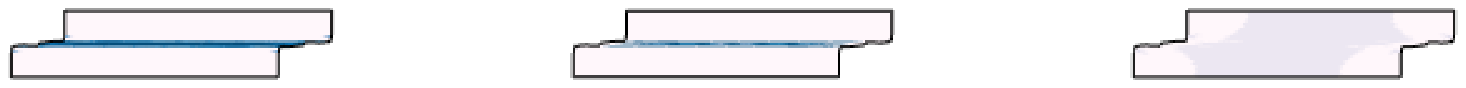}\quad t=320\,ks
\\[.1em]
\hspace*{-1em}\includegraphics[width=0.89\textwidth]{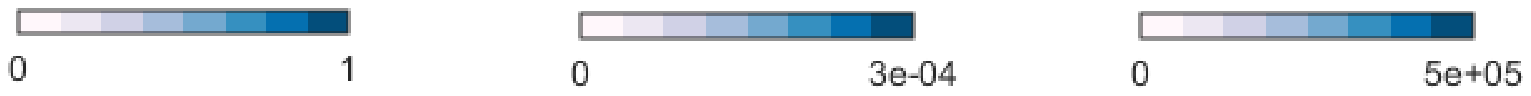}\qquad\qquad
\caption{Evolution of space-distributions of damage
(the left column, displaying $1{-}\zeta$), of the 
plastic strain (the middle column, displaying the Frobenius norm $|\pi|$) and of the von Mieses stress 
(the right column, displaying $|\dev(\sigma)|$). 
The displacement of the deformed domain is displayed magnified by the factor 12500. 
Distributions were computed for 
damage viscosity $a_2=10\,{\rm MPa\,s}$.  }
\label{fig:fields}\end{figure}

In order to see how the quality of discrete solutions depends on the time-step $\tau$, 
similar numerical tests are run for two additional time-steps $\tau=5\,$ks  and $\tau=10\,$ks. 
The resulting energy balance \eqref{engr-bal-disc} is displayed in 
Figure \ref{fig:upper_bounds}. Naturally, it is best fulfilled for the smallest 
considered time-step  $\tau=1\,$ks. 
Figure \ref{fig:stress-strains} 
shows the (horizontal component of the) reaction force
which is here evaluated (very roughly) as an average 
from element values of von Mises stresses in the middle 
narrow horizontal stripe (i.e.\ the fault zone) shown in Figure \ref{fig5-plast-dam-geom}. 
A comparison of Figures \ref{fig:upper_bounds} and \ref{fig:stress-strains} indicates 
that the energy balance \eqref{engr-bal-disc} is better satisfies in the purely 
elasto-plastic regime than within the undergoing damage. 
This becomes even more apparent if the damage process is speeded up by setting a 
smaller value $a_2=0.1\,$MPa\,s, cf.\ the left-hand parts of Figures \ref{fig:upper_bounds} 
and \ref{fig:stress-strains} versus the right-hand parts.

\begin{figure}
\center
\psfrag{Diss}{\scriptsize\begin{minipage}[t]{10em}\hspace*{.2em}work of\\[-.2em]external\\[-.2em]\hspace*{.7em}load\end{minipage}}
\psfrag{R}{\scriptsize\begin{minipage}[t]{10em}\hspace*{.8em}dissipated\\[-.2em]
\hspace*{1em}+\,stored\\[-.2em]\hspace*{1.5em}energy\end{minipage}}
\includegraphics[width=0.42\textwidth]{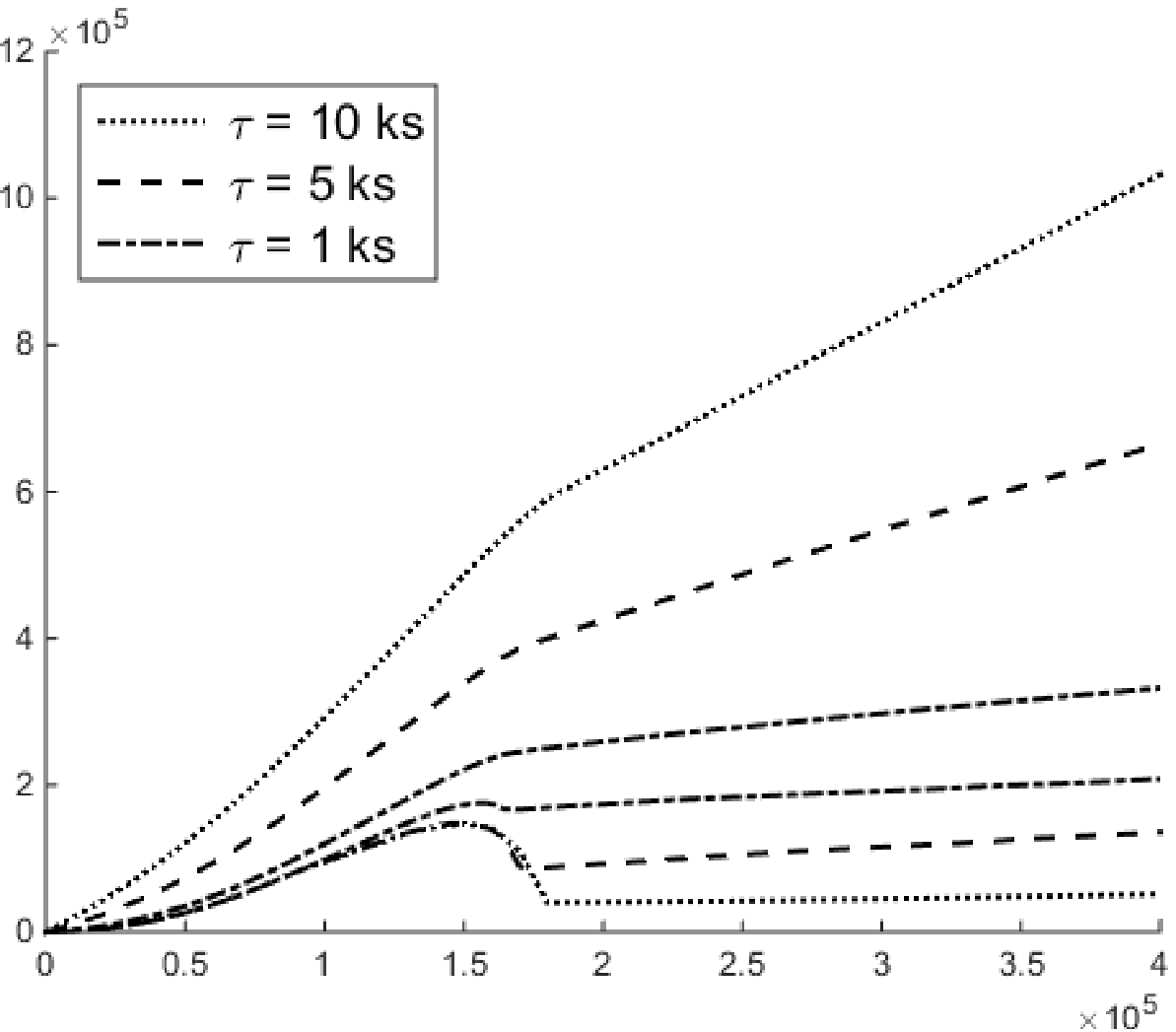}
\hspace*{-.1em}\includegraphics[width=.05\textwidth,height=.315\textwidth]{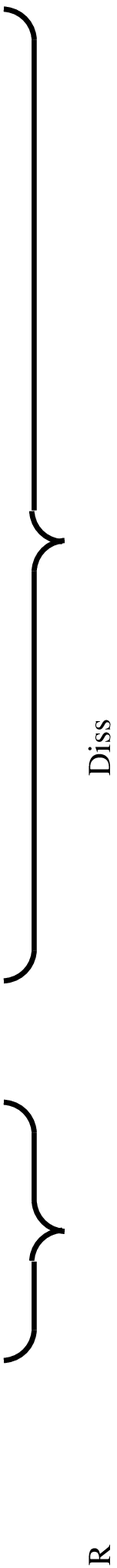}
\hspace{.05\textwidth}
\includegraphics[width=0.42\textwidth]{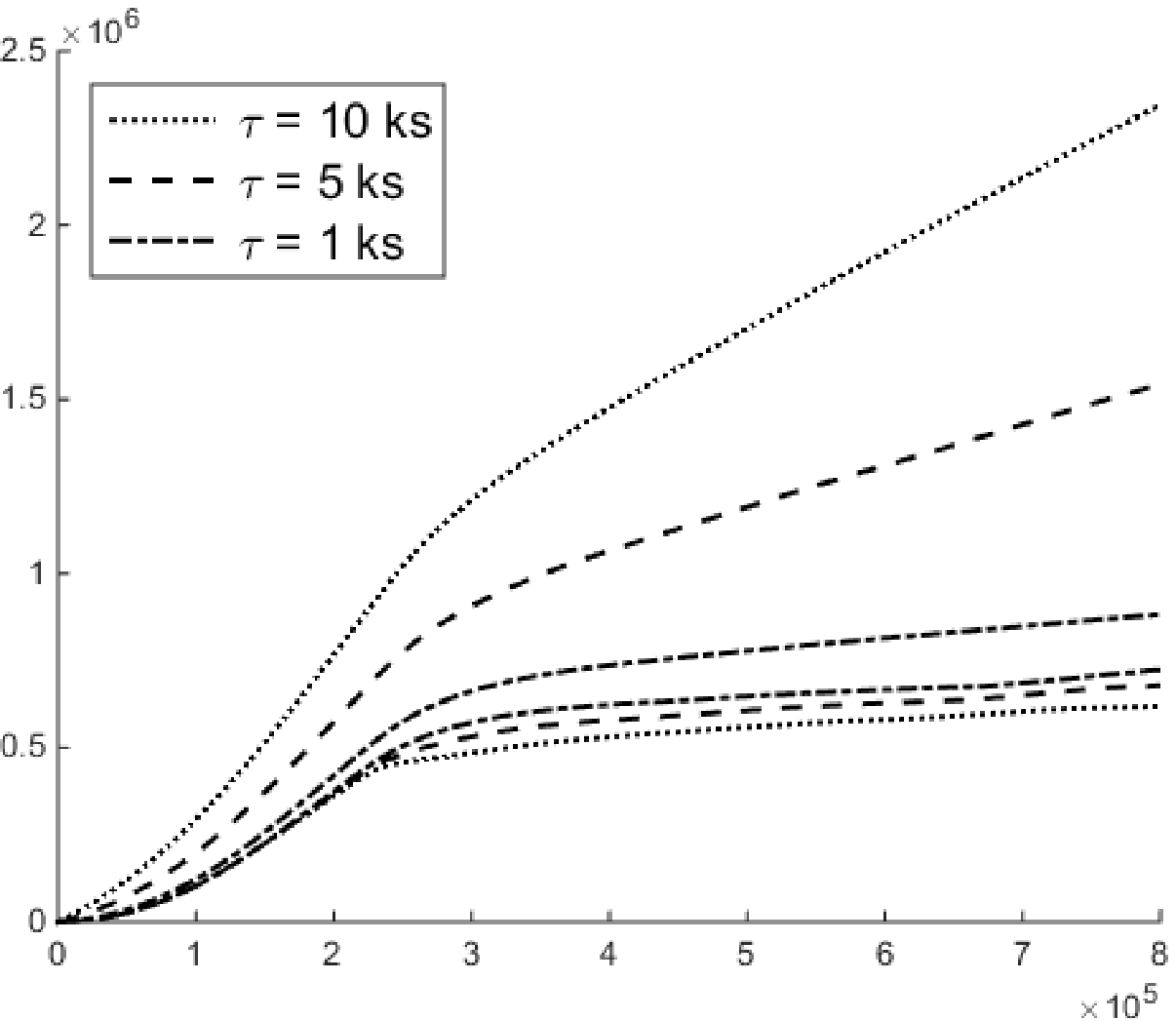}
\vspace*{-1em}
\caption{Evolution of the stored and dissipated energy (=\,the left-hand side
of \eqref{engr-bal-disc} for $T$ varying) and the work of external loading 
(=\,the right-hand side of \eqref{engr-bal-disc} for $T$ as a current time $t$)
calculated for three different values of the time steps $\tau=10,\ 5,\ 1\,{\rm ks}$, 
documenting the convergence of \eqref{engr-bal-disc} towards the
energy equality \eqref{engr-bal} proved in Proposition~\ref{prop-conv}.
For less viscous damage this convergence is naturally slower than
for a more viscous damage, cf.\ 
the left figure for $a_2=0.1\,{\rm MPa\,s}$ vs 
the right one for $a_2=10\,{\rm MPa\,s}$.
}\label{fig:upper_bounds}
\end{figure}

\begin{figure}
\center
\includegraphics[width=0.42\textwidth]{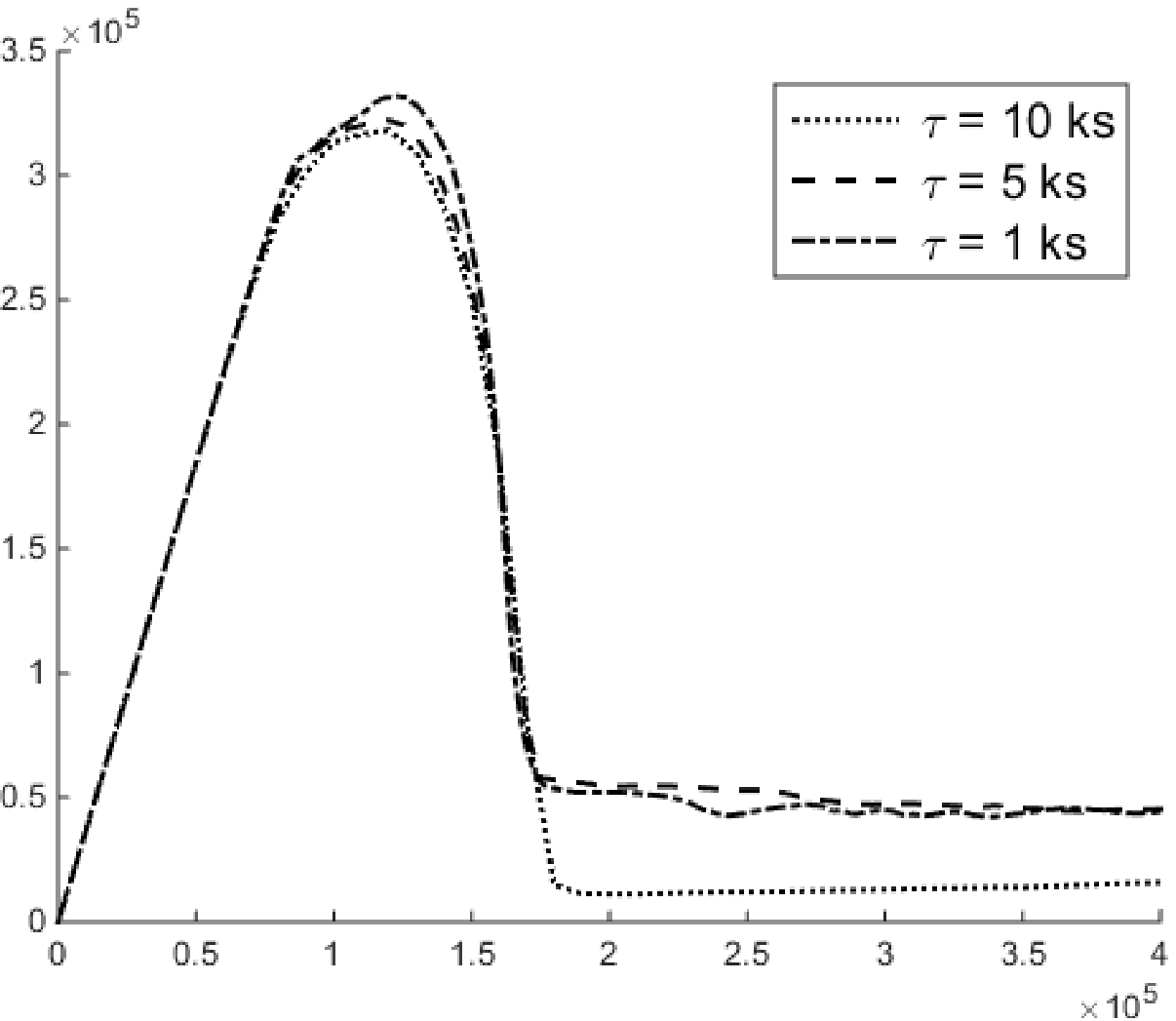}
\hspace{.1\textwidth}
\includegraphics[width=0.42\textwidth]{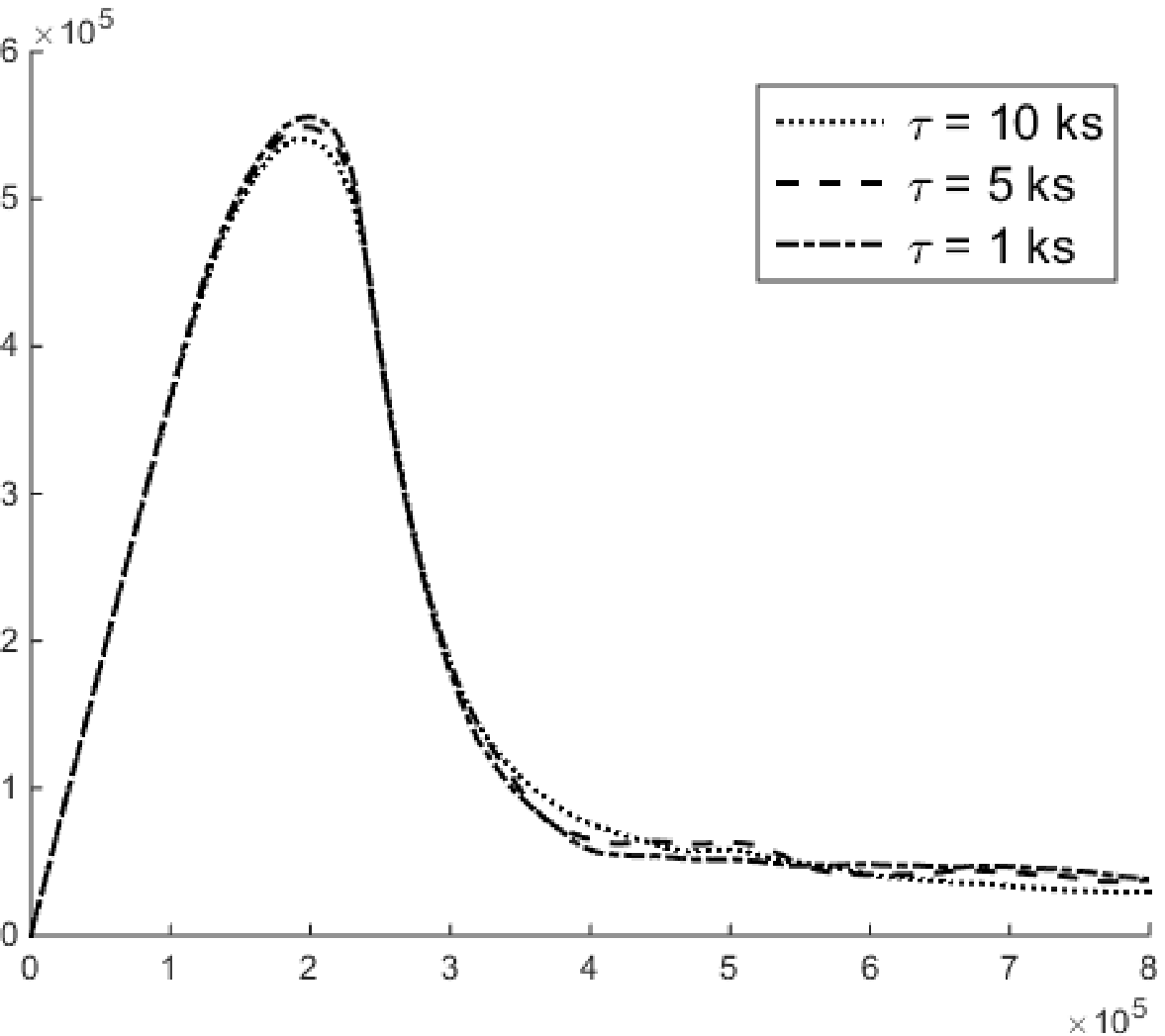}
\vspace*{-1em}
\caption{Evolution of the reaction force 
corresponding to Figure~\ref{fig:upper_bounds}; the time scales on 
the left and the right figures are different. Noteworthy, the force 
response is well converged even in situations when the energetics 
on Figure~\ref{fig:upper_bounds} exhibits still big gaps.
%
}\label{fig:stress-strains}
\end{figure}

Dependence of the reaction-force evolution for varying viscosity of damage 
is shown in Figure~\ref{fig:stress-strains_comparison} for $a_2$ as in 
Figures~\ref{fig:upper_bounds}--\ref{fig:stress-strains} compared also with
a smaller viscosity 
$a_2=1\,$kPa\,s
 which 
already provides
a response essentially identical to the 
even smaller viscosity 
$a_2=0.01\,$kPa\,s (not displayed in Figure~\ref{fig:stress-strains_comparison}) 
where conservation of energy is numerically still more difficult to achieve.
This indicates a certain tendency for convergence towards the model
using rate-independent damage combined with rate-dependent healing (as in 
\cite[Sect.\,5.2.7]{MieRou15RIPT}) and with perfect plasticity, which is 
theoretically not justified, however.

\begin{figure}\center
\includegraphics[width=0.6\textwidth,height=0.35\textwidth]{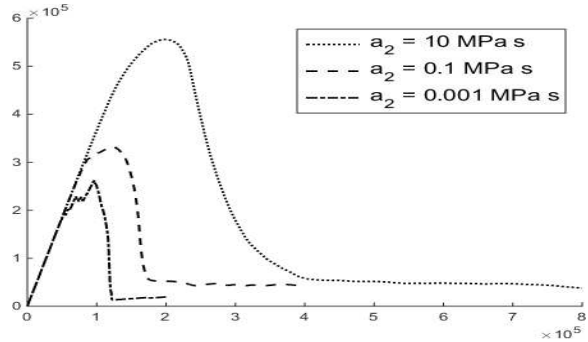}
\vspace*{-1em}
\caption{Dependence of the 
repulsive-force response on the viscosity of
damage, the cases $a_2=10$ and $0.1\,{\rm MPa\,s}$
are (parts of) Figure~\ref{fig:stress-strains} and are here compared 
also with even less viscous damage for $a_2=1\,{\rm kPa\,s}$
which gives essentially the same response as for the nearly
inviscid case $a_2=0.01\,{\rm kPa\,s}$ (not displayed, however);
the time-step $\tau=1\,{\rm ks}$. For decreasing viscosity, 
the rupture occurs earlier and propagates faster,
showing a tendency to converge to an inviscid rate-independent (and
theoretically not justified) damage model.
%
%
}\label{fig:stress-strains_comparison}
\end{figure}



Let us eventually remark that the a-posteriori information obtained 
from the residuum in the discrete energy balance 
\eqref{engr-bal-disc} written at a current time $t$ (as also used in 
Figure~\ref{fig:upper_bounds}) can be used to control adaptively 
the time step in a way to keep the numerical error in the energy
under an a-priori prescribed tolerance and, on the other hand, 
not to waste computational time by making too small time steps 
in periods of slow evolution. We intentionally presented our numerical
simulation on equidistant time partitions, but for actual 
geophysical simulations with very big difference in time scale 
between fast damage (earthquakes) and very slow healing, such 
an adaptivity is necessary.

{\small
\section*{Acknowledgments}
This research has been supported by GA\,\v CR through the project
13-18652S ``Computational modeling of damage and transport processes 
in quasi-brittle materials''
and 14-15264S 
``Experimentally justified multiscale modelling of shape memory alloys'',
with also the also institutional support RVO:61388998 (\v CR).
}




\newpage

\bibliographystyle{abbrv}
\bibliography{tr-jv-plast-dam}
\end{document}

======================= TRASH =======================

{\tiny
Due to \eqref{ass-S}, one can easily see that
\begin{align}
\lim_{\wt\zeta\weak\zeta,\ \wt\pi\to\pi}\int_\Omega\!\sY(\wt\zeta)\Indic_{B_1}^*(\wt\pi)\dd x\ge
\int_\Omega\!\sY(\zeta)\Indic_{B_1}^*(\pi)\dd x
\end{align}
due to the gradient terms of $\zeta$, so that $\scrR$ complies with 
\eq{eq6.D2}. Also \eq{eq6.D2+} can be satisfied: indeed, if $\sY$ is Lipschitz 
(with a Lipshitz constant $\ell_{\sY}^{}$) and $r>d$, then one can use the 
estimate 
\begin{align}\nonumber
&\big|\scrR(\wt\zeta;\pi,\zeta)-\scrR(\zeta;\pi,\zeta)\big|
=\bigg|\int_\Omega\!\sY(\wt\zeta)\Indic_{B_1}^*(\pi)\dd x
-\!\int_\Omega\!\sY(\zeta)\Indic_{B_1}^*(\pi)\dd x\bigg|
\\&\qquad\qquad\qquad
\le\int_\Omega\!\big|\sY(\wt\zeta){-}\sY(\zeta)\big|\Indic_{B_1}^*(\pi)\dd x
\le\ell_s C_0\big\|\wt\zeta{-}\zeta\big\|_{L^\infty(\Omega)}\big\|\pi\big\|_{L^1(\Omega)},
\label{eq6:continuity-R(.,z)}
\end{align}
with $C_0$ depending on $B_1$,\REM{shell we be more specific for $C_0$???} 
combined with the compact embedding $W^{1,r}(\Omega)\Subset L^\infty(\Omega)$.
Alternatively, if $p>4$, the driving force for damage 
$\frac12\mathbb C'(\zeta)e_\mathrm{el}:e_\mathrm{el}$
belongs to $L^\infty(0,T;L^2(\Omega))$ and, allowing for 
$r>\max(1,2d/(d{+}2))$
and assuming at most linear growth of $\partial a(\cdot)$,
one can combine \eq{eq6:continuity-R(.,z)} with the regularity of $\zeta$ from 
Remark~\ref{rem4:damage-regular} on p.\,\pageref{rem4:damage-regular}. Here, 
in contrast to \eq{Gm-gen}, $V(\DT u,\DT\zeta)$ is possibly nonsmooth and 
not quadratic in terms of $\DT\zeta$, which allows for modelling of activated 
fast-evolving damage combined with (possibly very) slow healing.
For this reason, we assume $q\ge p'$ 
so that $\DT\zeta\in L^q(Q)$ is in duality with the driving 
force $\frac12\mathbb C'(\zeta)e_\mathrm{el}:e_\mathrm{el}\in L^{p}(Q)$
and one can handle \eq{eq6:plast-dam13} by conventional methods for variational
inequalities. 
}